\renewcommand{\gg}{\gamma}
\newcommand{\rest}{\restriction}
\newcommand{\la}{\langle}
\newcommand{\ra}{\rangle}
\newcommand{\card}[1]{{\vert #1 \vert} }
\newcommand{\forces}{\Vdash}
\renewcommand{\models}{\vDash}
\newcommand{\cp}{{\rm cp }}
\newcommand{\cf}{{\rm cf}}
\newtheorem{theorem}{Theorem}[section]
\newtheorem{proposition}[theorem]{Proposition}
\newtheorem{definition}[theorem]{Definition}
\newtheorem{lemma}[theorem]{Lemma}
\newtheorem{corollary}[theorem]{Corollary}
\numberwithin{figure}{section}
\newenvironment{proof}{{\it{
Proof.}}}{\nopagebreak\mbox{}{\hfill$\square$}
\par\smallskip}
\newcommand{\rprop}[1]{Proposition~\ref{#1}}
\newcommand{\rthm}[1]{Theorem~\ref{#1}}
\newcommand{\rlem}[1]{Lemma~\ref{#1}}
\newcommand{\rdef}[1]{Definition~\ref{#1}}
\newcommand{\rsec}[1]{Section~\ref{#1}}
\def\k{\kappa}
\def\a{\alpha}
\def\b{\beta}
\def\d{\delta}
\def\l{\lambda}
\def\P{{\mathcal{P} }}
\def\W{{\mathcal{W} }}
\def\Q{{\mathcal{ Q}}}
\def\R{{\mathcal R}}
\def\H{{\rm{HOD}}}
\def\M{{\mathcal{M}}}
\def\N{{\mathcal{N}}}
\def\T {{\mathcal{T}}}
\def\U{{\mathcal{U}}}
\def\S{{\mathcal{S}}}
\def\VT{{\vec{\mathcal{T}}}}
\def\VU{{\vec{\mathcal{U}}}}
\def\cp #1{{ crit  #1 }}
\def\card#1{\left|#1\right|}
\def\iff{\mathrel{\leftrightarrow}}
\def\and{\mathrel{\kern1pt\&\kern1pt}}
\def\<#1>{\langle\,#1\,\rangle}
\title{On the prewellorderings associated to the directed systems of mice.\thanks{2000 Mathematics Subject Classifications:
03E15, 03E45, 03E60.}
\thanks{Keywords: Mouse, Game Quantifier, Prewellorderings, Projective Ordinals, Woodin cardinals.}}
\author{Grigor Sargsyan \thanks{This material is partially based upon work supported by the National Science Foundation under Grant No DMS-0902628.}\\
        Department of Mathematics\\
        University of California\\
        Los Angeles, California 90095 USA\\
        http://math.ucla.edu/$\sim$grigor\\
        grigor@math.ucla.edu}
\date{\today}
\begin{document}

\maketitle

\begin{abstract}
Working under $AD$, we investigate the length of prewellorderings given by the iterates of $\M_{2k+1}$, which is the minimal proper class mouse with $2k+1$ many Woodin cardinals. In particular, we answer some questions from \cite{Hjorth01} (the discussion of the questions appears in the last section of \cite{HjorthD}). 
\end{abstract}

\baselineskip=24pt

In recent years, there have been many interactions between inner model theory and descriptive set theory. While the connection between the two areas was established early on in 1960s, the bulk of modern interactions go back to the work of Martin, Steel and Woodin carried out in late 80s and early 90s. In particular, Steel's computation of $\H^{L(\mathbb{R})}$ below $\Theta$ (see \cite{SteelHod}), Woodin's subsequent computation of $\H^{L(\mathbb{R})}$ (see \cite{WoodinHod}) and  Woodin's computation of $\H^{L[x][g]}$ (largely unpublished) have been of crucial importance for the results that followed\footnote{Here, $x$ is a real and letting $\k$ be the least inaccessible of $L[x]$, $g\subseteq Coll(\omega, <\k)$ is $L[x]$-generic.}. 

In this paper, we investigate the prewellordering associated with the directed system generated by $\M_{2k+1}$ where $k\in \omega$. Our intended application is the computation of the sup of the lengths of $\Game^{2k+1}(\omega\cdot n-\utilde{\Pi}^1_1)$-prewellorderings. We show that the sup is $\kappa^1_{2k+3}$. This generalizes Hjorth's computation of $\Game^{1}(\omega\cdot n-\utilde{\Pi}^1_1)$-prewellorderings. See \rsec{the main theorem} for the statement of the main theorem of this paper.
All the descriptive set theoretic notions that we will need come from \cite{Moschovakis} and and the inner model theoretic notions come from \cite{Zeman}. 

\textbf{Acknowledgments.} The results of this paper were proven in Berlin during the Spring of 2006 while the author was visiting his advisor John Steel. I am grateful to John Steel for introducing me to inner model theory and for bringing the questions considered in this paper to my attention. I also thank Farmer Schlutzenberg for very motivational conversations during Fall of 2006.  Finally, I express my deepest gratitude to the referee for providing long list of fundamental improvements. 

\section{On descriptive set theory}

We assume $AD$ throughout this paper. As is customary with descriptive set theorists, we let $\mathbb{R}$ be the Baire space $\omega^\omega$. We let $u_n$ be the $n$th uniform indiscernible and $s_n=\la u_i: i\leq n\ra$. We let $s_0=\emptyset$. Under $AD$, $u_n=\aleph_n$ (see \cite{Kanamori}).

Recall that for $x\in \mathbb{R}$, 
\begin{center}
$C_{2n}(x)=\{ y\in \mathbb{R}: y$ is $\Delta^1_{2n}(x)$ in a countable ordinal $\}$
\end{center}
and
\begin{center}
$Q_{2n+1}(x)=\{ y\in \mathbb{R}: y$ is $\Delta^1_{2n+1}(x)$ in a countable ordinal $\}$.
\end{center}
The definitions of $C_{2n}$ and $Q_{2n+1}$ given above are actually theorems as these are not the original definitions of these objects. The first equality is due to Harrington and Kechris (see \cite{HarKech}) and the second one is due to Kechris, Martin and Solovay (see \cite{Qtheory}). 


Following \cite{Moschovakis}, we let \textit{pointclass} stand for any collection of sets of reals (that is, we are not requiring closure under the set theoretic operations). If $\Gamma$ is a pointclass then $\breve{\Gamma}$ is the dual pointclass and $\Delta_\Gamma=\Gamma\cap \breve{\Gamma}$.

A relation $\leq$ is a \textit{prewellordering}\index{prewellordering} if it is transitive, reflexive, connected and wellfounded. Given a set of reals $A$, $\phi$ is a norm on $A$ if $\phi: A\rightarrow {\rm{Ord}}$. For each norm $\phi$ on $A$, we let $\leq^\phi$ be the binary relation on $A$ given by $x\leq^\phi y$ iff $\phi(x)\leq \phi(y)$. Then $\leq^\phi$ is a prewellordering of $A$. The opposite is true as well, given a prewellordering $\leq$ of $A$ there is an associated norm $\phi$ defined on $A$ such that $\leq=\leq^{\phi}$. If $\Gamma$ is a pointclass then $\phi$ is a $\Gamma$-norm if there are relations $\leq_{\Gamma}^\phi\in \Gamma$ and $\leq_{\breve{\Gamma}}^\phi\in \breve{\Gamma}$ such that for every $y\in dom(\phi)$ and for any $x\in \mathbb{R}$,
\begin{center}
$[x\in dom(\phi) \wedge \phi(x)\leq \phi(y)]\leftrightarrow x \leq^\phi_\Gamma y\leftrightarrow x\leq^\phi_{\breve{\Gamma}} y$.
\end{center}
If $\Gamma$ is a pointclass, we let
\begin{center}
$\d(\Gamma)=\sup\{ \leq^* : \leq^*\in \Gamma$ and $\leq^*$ is a prewellordering $\}$.
\end{center}

A sequence of norms $\vec{\phi}=\la \phi_i : i<\omega\ra$ on $A$ is a \textit{scale}\index{scale} on $A$ if whenever $\la x_i: i<\omega\ra\subseteq A$ is a sequence of reals converging to $x$ such that for each $i$ the sequence $\la \phi_i(x_k) : k<\omega\ra$ is eventually constant then $x\in A$ and for each $i$, $\phi_i(x)\leq \l_i$ where $\l_i$ is the eventual value of $\la \phi_i(x_k) : k<\omega\ra$. We write $x_i\rightarrow x (mod \vec{\phi})$ if $\la x_i: i<\omega\ra$ converges to $x$ in the above sense. $\vec{\phi}$ is a $\Gamma$-scale\index{$\Gamma$-scale} on $A$ if there are relations $R\in \Gamma$ and $S\in \breve{\Gamma}$ such that for all $y\in A$, for any $x\in \mathbb{R}$ and for any $n<\omega$

\begin{center}
$[x\in A \wedge \phi_n(x)\leq \phi_n(y)]\leftrightarrow R(n, x, y) \leftrightarrow S(n, x, y)$.
\end{center}

We say $\Gamma$ has the prewellordering property if every set in $\Gamma$ has a $\Gamma$-norm. We say $\Gamma$ has the scale property if every set in $\Gamma$ has a $\Gamma$-scale. For more on prewellordering property and scale property see \cite{Moschovakis}.

Suppose $\kappa$ is a cardinal. $T\subseteq \cup_{n<\omega}\omega^n\times\kappa^n$ is a tree if whenever $s\in T$ then $s\rest i\in T$ for any $i<lh(s)$. For $(x, f)\in \omega^\omega\times\kappa^\omega$ is a branch of $T$ if $(x\rest i, f\rest i)\in T$ for any $i<\omega$. $[T]$ is the set of branches of $T$. $p[T]$ is the projection of $[T]$ on the first coordinate, i.e., $x\in p[T]$ iff there is $f\in \kappa^\omega$ such that $(x, f)\in T$.

 A set of reals $A$ is $\kappa$-Suslin\index{$\kappa$-Suslin} if there is a tree $T\subseteq \cup_{n<\omega}\omega^n\times\kappa^n$ such that $A=p[T]$. $A$ is Suslin if it is $\kappa$-Suslin for some $\kappa$. Given a scale $\vec{\phi}$ on $A$ one can construct a tree $T$ such that $p[T]=A$. More precisely, let $T$ be the set of pairs $(s, f)$ such that there is some real $x\in A$ such that $s\lhd x$ and $f(i)=\phi_i(x)$ for each $i<lh(f)$. Given a tree $T$ such that $p[T]=A$, one can get a scale $\vec{\phi}$ on $A$ by considering the leftmost branches of $T$ (see \cite{Moschovakis}). Thus, carrying a scale and being Suslin are equivalent.

 Finally, we say that $\kappa$ is a \textit{Suslin cardinal}\index{Suslin cardinal} if there is a set of reals $A$ which is $\kappa$-Suslin but $A$ is not $\eta$-Suslin for any $\eta<\k$. We let $S(\kappa)$\index{$S(\k)$} be the pointclass of $\kappa$-Suslin sets. It is not hard to show that $S(\kappa)$ is closed under projections (see \cite{Moschovakis}). For more on trees and Suslin sets see \cite{Moschovakis}. For a complete characterization of Suslin cardinals see \cite{Jackson}.

Under $AD$, for each $n$ and real $z$, $\Pi^1_{2n+1}(z)$ and $\Sigma^1_{2n+2}(z)$ have the scale property. The sup of $\utilde{\Pi}^1_{2n+1}$ prewellorderings and $\utilde{\Sigma}_{2n+2}$ prewellorderings play an important role in descriptive set theory. Following \cite{Moschovakis}, we let
\begin{center}
$\d^1_{2n+1}=\d(\utilde{\Pi}^1_{2n+1})=\d(\Pi^1_{2n+1})$
\end{center}
and
\begin{center}
$\d^1_{2n}=\d(\utilde{\Sigma}^1_{2n})$.
\end{center}
It turns out that under $AD$,
\begin{center}
$\d^1_{2n}=(\d_{2n+1}^1)^+$
\end{center}
and $\d^1_{2n+1}$ is a successor cardinal whose predecessor is denoted by $\kappa^1_{2n+1}$ (see \cite{Moschovakis}). It is shown in \cite{Moschovakis} that
\begin{center}
$\utilde{\Sigma}^1_{2n+3}=S(\kappa^1_{2k+1})$.
\end{center}
Also, $\kappa^1_3=\aleph_\omega$, $\d^1_3=\aleph_{\omega+1}$ and $\d^1_4=\aleph_{\omega+2}$.

$\Game$ is the \textit{game quantifier}. Recall that given a set of reals $A\subseteq \mathbb{R}^2$ we let $\Game A$ be the set
\begin{center}
$x\in \Game A\iff \exists x_0\forall x_1\exists x_2 \forall x_3 \cdot \cdot \cdot \exists x_{2n} \forall x_{2n+1} \cdot \cdot \cdot ( ( x, \la x_i: i<\omega\ra) \in A)$.
\end{center}
Here, the quantifiers range over $\omega$. Equivalently,
\begin{center}
$\Game A =\{ x \in \mathbb{R} : $ player $I$ has a winning strategy in $G_{A_x}\}$.
\end{center}
where $A_x=\{ y: (x, y)\in A\}$. A set is $\omega\cdot n-\utilde{\Pi}^1_1$ if there is a sequence $\la A_\a : \a< \omega \cdot n\ra\subseteq \utilde{\Pi}^1_1$ such that
\begin{center}
$x\in A\iff$ the least $\a$ such that $x\not \in A_\a$ is odd.
\end{center}
Equivalently sets in $\omega\cdot n-\utilde{\Pi}^1_1$ constitute the first $\omega\cdot n$ levels of the \textit{difference hierarchy} for $\utilde{\Pi}^1_1$.

\section{On inner model theory}

Recall that if $\M$ is a premouse then $\mathcal{G}(\M, \k)$ is the two player iteration game that has $<\k$ moves (see \cite{OIMT}). In this game, player $I$ plays the successor steps which amounts to choosing an extender and applying it to the earliest model it makes sense to apply. Player II plays limit stages and her job is to choose a well-founded cofinal branch of the resulting iteration tree. II wins if all the models produced in the game are well founded. $\Sigma$ is then called a $\k$-iteration strategy for $\M$ if it is a winning strategy for player $II$. 

If $\M$ is a mouse\footnote{The reader should consult \cite{OIMT} for the definition of a mouse.} and $\xi\leq o(\M)$, then we let $\M||\xi$ be $\M$ cutoff at $\xi$, i.e., we keep the predicate indexed at $\xi$. We let $\M|\xi$ be $\M||\xi$ without the last predicate. We say $\xi$ is a cutpoint of $\M$ if there is no extender $E$ on $\M$ such that $\xi\in(\cp(E), lh(E)]$. We say $\xi$ is a strong cutpoint if there is no $E$ on $\M$ such that $\xi\in[\cp(E), lh(E)]$.

If $\T$ is an iteration tree, i.e., a play of the game, then, following the notation of \cite{FSIT}, $\T$ has the form
\begin{center}
$\T=\la T, deg, D, \la E_\a, \M^*_{\a+1}| \a+1<\eta\ra\ra$.
\end{center}
Recall that $D$ is the set of \textit{dropping} points. Recall also that if $\eta$ is limit then
\begin{center}
$\vec{E}(\T)= \cup_{\a<\eta}(\dot{E}^{\M_\a\rest  lh(E_\a)})$,\\
$\M(\T)=\cup_{\a<\eta}\M_\a\rest lh(E_\a)$,\\
$\d(\T)=\sup_{\a<\eta} lh(E_\a)$.
\end{center}
If $b$ is a branch of $\T$ then $\M^\T_b$ is the branch model of the tree. Then if $\a\leq_T\b$ then $i_{\a, \b}^\T:\M^*_\a\rightarrow \M_\b^\T$ is the iteration map if $[\a, \b]_\T\cap D=\emptyset$ and $i_{\a, b}^\T:\M^*_\a\rightarrow \M_b^\T$ is the iteration map if $\a\in b$ and $b-\a\cap D =\emptyset$. \textit{In this paper, all iteration trees are normal. We will refer to the general iterations as stacks of normal trees.} 

It is by now a standard fact that if $b$ and $c$ are cofinal branches of $\T$ on $\M$ and $\R=\M_b^\T\cap \M_c^\T$ then $\R\models ``\d(\T)$ is Woodin" (see \cite{OIMT}). Moreover, if $\Q$ is a mouse over $\M(\T)$ (this in particular means that $\Q$ has no extender overlapping with $\d(\T)$) such that $\Q\models ``\d(\T)$ is Woodin" yet there is a counterexample to Woodiness of $\d(\T)$ in $L_1(\Q)$ then there is at most one cofinal branch $b$ of $\T$ such that $\Q\trianglelefteq\M^\T_b$ (see \cite{OIMT}). The following lemma, which builds upon the proof of the aforementioned fact is one of the most important ingredients available to us and will be used in this paper many times. It is essentially due to Martin and Steel, see Theorem 2.2 of \cite{IT}.

\begin{lemma}[Uniqueness of branches]\label{uniqueness of branches} Suppose $\M$ is a mouse and $\T$ is an iteration tree on $\M$ of limit length. Suppose $s$ is a cofinal subset of $\d(\T)$. Then there is at most one cofinal branch $b$ such that there is $\a\in b$ with the property that $i^\T_{\a, b}$ exists and $s\subseteq ran(i^{\T}_{\a, b})$.
\end{lemma}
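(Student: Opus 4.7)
The plan is to argue by contradiction using the Martin--Steel zipper technique. Suppose $b \neq c$ are two distinct cofinal branches of $\T$ satisfying the hypothesis, witnessed by $\a_b \in b$ and $\a_c \in c$ with iteration maps $i^\T_{\a_b, b}$ and $i^\T_{\a_c, c}$ defined and having $s$ in their ranges. First I would reduce to a single witnessing ordinal lying in $b \cap c$. When $\max(b \cap c) \geq \max(\a_b, \a_c)$, any $\gamma \in b \cap c$ above both works: iteration maps factor as $i^\T_{\a_b, b} = i^\T_{\gamma, b} \circ i^\T_{\a_b, \gamma}$ (and likewise for $c$), so $s \subseteq \mathrm{ran}(i^\T_{\gamma, b}) \cap \mathrm{ran}(i^\T_{\gamma, c})$. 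In the opposite case I would reorganize the argument to work with the separate preimages of $s$ in $\M_{\a_b}$ and $\M_{\a_c}$, tracking their images under the tree maps into the common ancestor $\M_{\max(b \cap c)}$.

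Second, I would invoke the standard fact recalled in the excerpt just before the lemma that $\M^\T_b$ and $\M^\T_c$ share $\M(\T)$ as an initial segment, so every $\xi \in s$, being an ordinal below $\d(\T)$, denotes the same object in the two branch models. For each $\xi \in s$, let $\bar\xi_b$ be the least ordinal of $\M_\gamma$ with $i^\T_{\gamma, b}(\bar\xi_b) = \xi$, and let $\bar\xi_c$ be the corresponding least preimage under $i^\T_{\gamma, c}$. Because the notion ``least preimage of $\xi$'' is determined by $\M_\gamma$ together with the value $\xi$ in the common part $\M(\T)$, and the two embeddings are built from the same extender sequence up through $\gamma$, I would argue that $\bar\xi_b = \bar\xi_c$ for a cofinal set of $\xi \in s$. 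This yields the crucial cofinal agreement: $i^\T_{\gamma, b}$ and $i^\T_{\gamma, c}$ coincide on a subset of $\M_\gamma$ whose images in $\M^\T_b$ and $\M^\T_c$ are both cofinal in $\d(\T)$.

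Third, I would plug the cofinal agreement into the standard Martin--Steel zipper. Let $\d^* = \max(b \cap c)$ and let $\b_b, \b_c$ be such that $\b_b + 1$ and $\b_c + 1$ are the $<_T$-successors of $\d^*$ in $b$ and $c$ respectively, so that $E_{\b_b}$ and $E_{\b_c}$ are the extenders applied to $\M_{\d^*}$ in the two branches. The cofinal agreement of the iteration maps above $\d^*$ forces the generators and lengths of $E_{\b_b}$ and $E_{\b_c}$ to coincide cofinally, whence one extender must be an initial segment of the other in the extender-sequence order of $\M_{\d^*}$. This violates either $b \neq c$ splitting at $\d^*$ (if the extenders are equal, giving $\b_b = \b_c$) or the normality of $\T$ (if one is a proper initial segment of the other). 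The principal obstacle is this third step: organizing the cofinal agreement into a genuine compatibility of $E_{\b_b}$ and $E_{\b_c}$ with careful bookkeeping of critical points, lengths, and generators, and correctly handling the borderline case $\max(b \cap c) < \max(\a_b, \a_c)$ by adapting the preimage analysis through the common ancestor.
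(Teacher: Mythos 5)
Your high-level instinct — that the lemma is a Martin--Steel zipper argument — is correct, and this matches the paper's proof (which cites the proof of Theorem 2.2 of \cite{IT}). But the proposal has a genuine gap in its central step, and the reduction in the first step is also not completed.

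The problem is the ``crucial cofinal agreement'' claimed in your second step. You argue that since $\M^\T_b$ and $\M^\T_c$ agree on $\M(\T)$, the least preimages $\bar\xi_b$ and $\bar\xi_c$ of $\xi$ under $i^\T_{\gamma,b}$ and $i^\T_{\gamma,c}$ must coincide for cofinally many $\xi\in s$. This does not follow, and it is false in general. Agreement of the two branch models on the common part $\M(\T)$ is a statement about the \emph{targets}; it says nothing about how the two \emph{embeddings} act. The maps $i^\T_{\gamma,b}$ and $i^\T_{\gamma,c}$ are ultrapower maps along disjoint tails of extenders chosen after $\gamma$, with critical points that interleave strictly (that interleaving is exactly what the Martin--Steel zipper produces), so the preimage of a given ordinal $\xi$ under the two maps will typically be different. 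Your phrase ``built from the same extender sequence up through $\gamma$'' is true but irrelevant, since the maps only begin to be applied after $\gamma$. Your third step then tries to conclude that the first extenders applied off $\M_{\d^*}$, namely $E_{\b_b}$ and $E_{\b_c}$, must agree as (initial segments of) extenders; but again, the zipper shows their critical points interleave, so they are never compatible in the way you want. In short, you are trying to derive an \emph{agreement} of embeddings from the hypothesis, whereas the hypothesis only gives agreement of \emph{ranges}, and the standard argument exploits the range hypothesis directly rather than trying to upgrade it.

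You should also finish the reduction. You explicitly defer ``the opposite case'' $\max(b\cap c) < \max(\a_b,\a_c)$, but after any minimization this is the \emph{only} case: if $\a$ is least with $s\subseteq\mathrm{ran}(i^\T_{\a,b})$ and $\gamma^*=\max(b\cap c)>\a$, then $[\a,\gamma^*]_T$ has no drops (as $i^\T_{\a,b}$ exists) and $s\subseteq\mathrm{ran}(i^\T_{\gamma^*,b})$, contradicting minimality of $\a$; so $\gamma^*\leq\a\leq\b$. This is precisely the normalization the paper performs.

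For contrast, here is the paper's argument (which you should compare to yours). After normalizing so that $\a\leq\b$ are least with $s$ in the respective ranges and the branches split at or before $\a$, write $b$ as the downward closure of $\la\a_n\ra$ and $c$ of $\la\b_n\ra$ with $\a_0=\a$, $\b_0=\b$. Take $\xi$ to be the \emph{least} ordinal in $\mathrm{ran}(i^\T_{\a,b})\cap\mathrm{ran}(i^\T_{\b,c})$. Let $n$ be least with $\cp(i^\T_{\a_n,b})>\xi$; then $\cp(E^\T_{\a_{n+1}-1})>\xi$ and $\lh(E^\T_{\a_n})<\xi$, i.e. $\xi$ sits in a ``gap'' for the $b$-side. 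The Martin--Steel overlapping lemma then forces $\xi\in[\cp(E^\T_{\b_m-1}),\lh(E^\T_{\b_m-1}))$ for some $m\geq 1$, so $\xi$ lies strictly inside the window of an extender applied on the $c$-side. Being inside that window, $\xi$ is not in $\mathrm{ran}(i^\T_{\b_{m-1},c})\supseteq\mathrm{ran}(i^\T_{\b,c})$, contradicting the choice of $\xi$. The point is that one tracks a single carefully chosen ordinal through the zipper, rather than attempting to synchronize the two branch embeddings.
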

\begin{proof}
Towards a contradiction, suppose there are two cofinal branches $b$ and $c$ such that for some $\a, \b$, both $i^\T_{\a, b}$ and $i^\T_{\b, c}$ exist and $s\subseteq ran(i^\T_{\a, b})\cap ran(i^\T_{\b, c})$. Without loss of generality we can assume that $\a$ and $\b$ are the least ordinals with this property, $\a\leq \b$ and that $b$ and $c$ diverge at $\a$ or earlier, i.e., if $\gg$ is the least ordinal in $b\cap c$ then $\gg\leq \a$. By \cite{IT}, we can assume that $b$ is the downward closure of $\la \a_n: n<\omega\ra$,  $c$ is the downwards closure of $\la \b_n: n<\omega\ra$, $\a_0=\a$ and $\b_0=\b$. Let then $\xi$ be the least ordinal in $ran(i^\T_{\a, b})\cap ran(i^\T_{\b, c})$. Let $n$ be the least such that $\cp(i^\T_{\a_n, b})>\xi$. This means that $\cp(E^\T_{\a_{n+1}-1})>\xi$ and that $lh(E^\T_{\a_n})<\xi$. By the proof of Theorem 2.2 of \cite{IT}, this means that for some $m\geq 1$, $\xi \in [\cp(E^\T_{\b_m-1}), lh(E^\T_{\b_m-1}))$. This then implies that $\xi\not \in ran(i^\T_{\b_{m-1}, c})$, which is a contradiction.
\end{proof}

The proof of \rlem{uniqueness of branches} gives the following as well.

\begin{lemma}\label{partial agreement} Suppose $\M$ is a mouse and $\T$ is an iteration tree on $\M$ of limit length. Suppose $b, c$ are two cofinal branches of $\T$ such that $i^\T_b$ and $i^\T_c$ exist. Suppose that for some $\a$,
\begin{center}
$i^\T_b(\a)=i^\T_c(\a)<d(\T)$.
\end{center}
Then $i^\T_b\rest \a = i^\T_c\rest \a$. Moreover, if $\xi\in b$ is the least such that $\cp(E_\xi^\T)>i^\T_b(\a)$ then $b\cap \xi=c\cap \xi$.
\end{lemma}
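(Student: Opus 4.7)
The plan is to adapt the Martin--Steel zipper argument from the proof of \rlem{uniqueness of branches}, taking $\zeta := i^\T_b(\a) = i^\T_c(\a)$ to play the role of the distinguished ordinal $\xi$ in that proof. The hypothesis $\zeta < \d(\T)$ substitutes for the cofinality-of-$s$ assumption in the previous lemma; it is exactly what is needed so that the critical points along $b$ and $c$ eventually surpass $\zeta$ while some still lie below.

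I would first establish the ``moreover'' clause, from which the equality $i^\T_b \rest \a = i^\T_c \rest \a$ then follows by a standard factoring argument. Let $\gg := \max(b \cap c)$ and suppose, toward a contradiction, that $\gg < \xi$. By the minimality of $\xi$, every extender $E^\T_\eta$ used along $b$ at an index $\gg < \eta < \xi$ has $\cp(E^\T_\eta) \leq \zeta$. Analogously, since $\zeta = i^\T_c(\a) < \d(\T)$ and critical points on $c$ grow cofinally to $\d(\T)$, there is a least $\xi' \in c$ with $\cp(E^\T_{\xi'}) > \zeta$, and every extender used on $c$ between $\gg$ and $\xi'$ has critical point $\leq \zeta$. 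Thus both branches apply non-trivial $\zeta$-moving extenders past the common part $[0,\gg]_\T$.

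Now I would rerun the zipper construction of \rlem{uniqueness of branches}: choose cofinal sequences $\la \a_n : n < \omega \ra \subseteq b$ and $\la \b_n : n < \omega \ra \subseteq c$ with $\a_0, \b_0$ just past $\gg$, and let $n$ be the least index with $\cp(i^\T_{\a_n, b}) > \zeta$. Such $n$ exists because the tail critical points on $b$ grow cofinally to $\d(\T) > \zeta$, and $n \geq 1$ because at least one $\zeta$-moving extender occurs on $b$ past $\gg$. Hence $\cp(E^\T_{\a_{n+1} - 1}) > \zeta$ and $\lh(E^\T_{\a_n}) < \zeta$, which are precisely the inequalities consumed in the proof of \rlem{uniqueness of branches}. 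The Martin--Steel interleaving from that proof then yields some $m \geq 1$ with $\zeta \in [\cp(E^\T_{\b_m - 1}), \lh(E^\T_{\b_m - 1}))$; this interval consists of generators of the ultrapower by $E^\T_{\b_m - 1}$, so $\zeta \notin ran(i^\T_{\b_{m-1}, c}) \supseteq ran(i^\T_c)$, contradicting $\zeta = i^\T_c(\a) \in ran(i^\T_c)$.

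Having established $b \cap \xi = c \cap \xi$, the equality $i^\T_b \rest \a = i^\T_c \rest \a$ follows by factoring each branch map through a common node $\bar\eta \in b \cap c$ with $\bar\eta <_T \xi$, and observing, via the defining property of $\xi$ on $b$ and its symmetric analog on $c$, that the tail factors past $\bar\eta$ have critical point $> \zeta \geq i^\T_{0,\bar\eta}(\a)$ and hence fix all ordinals up to the image of $\a$ in the common initial segment. The main obstacle is the zipper bookkeeping inside the core contradiction, but this is inherited from the proof of \rlem{uniqueness of branches}; the only new input is to verify that a fixed $\zeta < \d(\T)$ suffices to drive the argument in place of an element of a cofinal set.
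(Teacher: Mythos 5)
Your plan is correct in spirit: the paper's proof of \rlem{partial agreement} is precisely ``rerun the proof of \rlem{uniqueness of branches}'' with $\zeta := i^\T_b(\a)=i^\T_c(\a)$ playing the role of the distinguished ordinal, and that is what you are doing. The order of presentation (establish the ``moreover'' clause, then factor through a common node to get $i^\T_b\rest\a=i^\T_c\rest\a$) is a sensible organization of the same idea.

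However, there is a genuine gap in your argument for the ``moreover'' clause, centered on the assertion that $n\geq 1$, i.e.\ that $\gg < \xi$ forces at least one $\zeta$-moving extender to be applied along $b$ strictly past $\gg$. You justify the bound $\cp(E^\T_\eta)\leq\zeta$ for extenders used on $b$ between $\gg$ and $\xi$ by ``minimality of $\xi$'', but $\xi$ is minimal among nodes of $b$ with $\cp(E^\T_\xi)>\zeta$, whereas the index $\eta$ of an extender \emph{applied along} $b$ (i.e.\ with $\eta+1\in b$ and $\gg =\,$T-pred$(\eta+1)$) is in general \emph{not} a member of $b$ when $\eta>\gg$. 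So minimality of $\xi$ says nothing about $\cp(E^\T_\eta)$ for such $\eta$, and in fact it is perfectly consistent that the first extenders that $b$ and $c$ apply to $\M^\T_\gg$ both have critical point $>\zeta$ (one only needs $\zeta<\nu(E^\T_\gg)$, which is not ruled out by the hypotheses). In that situation the zipper never fires, $\zeta = i^\T_{0,\gg}(\a)$ on both sides, and your derivation of a contradiction from $\gg<\xi$ collapses; yet $\gg<\xi$ is entirely possible. This exposes a second, related problem: you are trying to prove the strictly stronger statement $\max(b\cap c)\geq\xi$, whereas the lemma only asserts $b\cap\xi=c\cap\xi$, which can hold with $\max(b\cap c)<\xi$ (namely when neither branch has a node in the open interval $(\max(b\cap c),\xi)$). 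So the case where no $\zeta$-moving extender appears past the split needs to be handled by a separate, direct verification of $b\cap\xi=c\cap\xi$ rather than by the zipper, and your current write-up does not supply it. Finally, a minor point: $b\cap c$ need not have a maximum (the branches can diverge at a limit node), so $\gg$ should be introduced as the least point of disagreement rather than as $\max(b\cap c)$.
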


If $\M$ is a mouse and $\T$ is a tree then we say $\T$ is above $\eta$ if all extender used in $\T$ have critical point $>\eta$. If $\Sigma$ is an $(\omega_1, \omega_1)$-iteration strategy for $\M$ and $\VT$ is a stack of trees on $\M$ according $\Sigma$ with last model $\N$ then we let $\Sigma_{\N, \VT}$ be the strategy of $\N$ induced by $\Sigma$. We say $\Sigma$ has the Dodd-Jensen property if whenever $\N$ is an iterate of $\M$ via $\Sigma$ and $\pi:\M\rightarrow \W\trianglelefteq \N$ is (fine structural) embedding then the iteration from $\M$ to $\N$ doesn't drop, $\W=\N$ and if $i:\M\rightarrow \N$ is the iteration embedding then for every $\a$, $i(\a)\leq \pi(\a)$. If $\Sigma$ has the Dood-Jensen property and $\VT$ and $\VU$ are two stacks on $\M$ with last model $\N$ such that $i^\VT$ and $i^{\VU}$ exist then $i^{\VT}=i^{\VU}$ and $\Sigma_{\N, \VT}=\Sigma_{\N, \VU}$. Lastly, we let
\begin{center}
$I(\M, \Sigma)=\{ \N :$ there is a stack $\VT$ on $\M$ according to $\Sigma$ with last model $\N$ and $i^{\VT}$ exists $\}$.
\end{center}

\subsection{$\S$-constructions}

Here we introduce $\S$-constructions which were first introduced in \cite{Selfiter} where they were called $P$-constructions. Such constructions are due to Steel and hence, we change the terminology and call them $\S$-constructions. These constructions allow one to translate mice over some set $A$ to mice over some set $B$ provided $A$ and $B$ are somehow close. The complete proof of the following proposition is essentially the proof of Lemma 1.5 of \cite{Selfiter}.

\begin{proposition}\label{s-constructions prop}
Suppose $\M$ is a sound mouse and $\d$ is a strong cutpoint cardinal of $\M$.  Suppose further that $\N\in \M|\d+1$ is such that $\d\subseteq \N\subseteq H_\d^\M$ and there is a partial ordering $\mathbb{P}\in L_{\omega}[\N]$ such that whenever $\Q$ is a mouse over $\N$ such that $H_\d^\Q=\N$ then $\M|\d$ is $\mathbb{P}$-generic over $\Q$. Then there is a mouse $\S$ over $\N$ such that $\M|\d$ is generic over $\S$ and $\S[\M|\d]=\M$.
\end{proposition}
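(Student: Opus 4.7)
The plan is a level-by-level translation (the Steel $\S$-construction), building $\S$ by recursion on $\xi\geq\d$ so that $\S|\xi$ mirrors $\M|\xi$ but with $\N$ in place of $\M|\d$ at the bottom. Set $\S|\d=\N$. At a successor stage $\xi+1$, if $\M|\xi+1$ carries a top extender $F$, then since $\d$ is a strong cutpoint of $\M$ we have $\cp(F)>\d$, and I would put on $\S|\xi+1$ the top predicate $F':=F\cap \S|\xi$, that is, the restriction of $F$ to subsets of $\cp(F)$ that lie in $\S|\xi$. If $\xi+1$ is not an extender index, take the rudimentary closure; at limits take unions. The construction continues up to $\xi=o(\M)$.

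Simultaneously I would maintain the inductive invariant that $\S|\xi$ is a sound premouse over $\N$, that $\M|\d$ is $\mathbb{P}$-generic over $\S|\xi$, and that $\S|\xi[\M|\d]=\M|\xi$. The base case $\xi=\d$ is the hypothesis on $\N$ applied with $\Q=\S|\d=\N$. For the inductive step, since $\mathbb{P}\in L_\omega[\N]$ is uniformly definable from $\N$, the forcing relation for $\mathbb{P}$ is uniformly definable inside every $\S|\xi$, so the generic extension by $\M|\d$ computes precisely $\M|\xi$. In the extender case, every subset of $\cp(F)$ in $\S|\xi$ lies in $\M|\xi$ by the invariant, so $F'$ is a total extender on $\S|\xi$, and its ultrapower embeds into $\mathrm{Ult}(\M|\xi, F)$, which gives wellfoundedness. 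Coherence of $\vec{E}^{\S}$ and the fine-structural conditions transfer from $\M$ to $\S$ via the generic translation, together with the fact that $\mathbb{P}$ has size at most $\d$ and hence is small compared with $\cp(F)$.

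To conclude that $\S$ is a mouse, I would lift $\M$'s iteration strategy $\Sigma$. Given a normal tree $\T$ on $\S$ above $\d$, each extender in $\T$ is a restriction of the corresponding extender on an iterate of $\M$, so $\T$ lifts canonically to a tree $\T^*$ on $\M$ with the same tree order. Any cofinal branch chosen by $\Sigma$ on $\T^*$ has a wellfounded branch model, and applying the $\S$-construction to that branch model produces the branch model of the corresponding branch of $\T$. Evaluating the invariant at $\xi=o(\M)$ then yields $\S[\M|\d]=\M$.

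The principal obstacle is verifying at the extender step that $F'$ satisfies the initial segment condition for $\vec{E}^{\S}$: any proper initial segment of $F'$ present in $\S|\xi$ must itself appear on the extender sequence of $\S|\xi$. Amenability and coherence are comparatively direct because $\mathbb{P}$ has size at most $\d$ while $\cp(F)>\d$, so $F$'s action on sets in $\S|\xi$ is unaffected by the generic. For the initial segment condition one pushes the candidate initial segment up to $\M|\xi$, uses that $F$ has the initial segment property on $\vec{E}^{\M}$, and then translates the resulting segment back to $\vec{E}^{\S}$ via the inverse of the $\S$-construction. This is precisely the content of the argument in Lemma~1.5 of \cite{Selfiter}.
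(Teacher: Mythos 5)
Your proposal follows exactly the approach the paper takes: the level-by-level $\S$-construction, where $\S|\xi$ is obtained from $\M|\xi$ by stripping out the generic $\M|\d$, with active extenders restricted to the smaller model (this is what the paper describes with the $\la \S_\a, \bar{\S}_\a\ra$ recursion), and with the invariant $\S|\xi[\M|\d]=\M|\xi$ propagated inductively. The paper itself does not give a self-contained proof; after laying out the recursion it defers all the fine-structural verifications (amenability, coherence, the initial segment condition, and the transfer of iterability via lifted trees) to Lemma~1.5 of \cite{Selfiter}, exactly as your sketch does, so the two treatments are essentially identical in route and in the amount of detail supplied.
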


It is clear what $\S$ must be. Because $\mathbb{P}$ is a small forcing with respect to the critical points of the extenders of $\M$ that have indices bigger than $\d$, all such extenders can be put on a sequence of some mouse over $\N$. This is exactly what $S$-constructions do. An $S$-construction of $\M$ over $\N$ is a sequence of $\N$-mice $\la \S_\a, \bar{\S}_\a: \a\leq \eta\ra$ such that
\begin{enumerate}
\item $\S_0=L_{\omega}[\N]$,
\item if $\M|\d$ is generic over $\bar{\S}_\a$ for a forcing in $L_{\omega}[\N]$ then $\bar{\S}_\a[\N]=\M|(\omega\times\a)$ and
\begin{enumerate}
\item if $\M||(\omega\times \a)$ is active then $\S_\a$ is the expansion of $\bar{\S_\a}$ by the last extender of $\M||(\omega\times \a)$ and $\bar{\S}_{\a+1}=rud(\S_\a)$,
\item if $\M||(\omega\times \a)$ is passive then $\S_\a=\bar{\S_\a}$ and $\bar{\S}_{\a+1}=rud(\S_\a)$,
\end{enumerate}
\item if $\l$ is limit then $\bar{S}_\l=\cup_{\a<\l}\S_\a$.
\end{enumerate}

By the proof of Lemma 1.5 of \cite{Selfiter}, the $\S$-construction described in 1-3 cannot fail as long as the hypothesis of 2 holds. Thus, we always have a last model of $\S$-construction which might be some $\bar{\S}_\a$ instead of $\S_\a$. 

\begin{definition}\label{s(n)} We let $\S^\M(\N)$ be the last model of the $\S$ construction done over $\N$.
\end{definition}

Then by the proof of Lemma 1.5 of \cite{Selfiter}, $\S[\M|\d]\trianglelefteq\M$. Moreover, if the hypothesis of 2 never fails then in fact, $\S[\M|\d]=\M$. It also follows that $\S$ inherits whatever iterability $\M$ has above $\d$. The method of $\S$-constructions is a very useful inner model theoretic tool. A particularly important application for us is the following lemma.

\begin{lemma}\label{s-constructions give q-structures} Suppose $\M\models ZFC-Powerset$ is a mouse and $\eta$ is a strong cutpoint non-Woodin cardinal of $\M$. Suppose $\gg>\eta$ is a cardinal of $\M$ and $\N=L[\vec{E}]^{\M|\gg}$. Suppose $L_{\omega}(\N|\eta)\models ``\eta$ is Woodin". Let $\la \S_\a, \bar{\S}_\a :\a<\nu\ra$ be the $\S$-construction of $\M|(\eta^+)^\M$ over $\N|\eta$. Then for some $\a<\nu$, $\S_\a\models ``\eta$ isn't Woodin".
\end{lemma}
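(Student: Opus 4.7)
My approach is proof by contradiction. Suppose $\S_\alpha \models$ ``$\eta$ is Woodin'' for every $\alpha<\nu$, and I will force a contradiction by showing that the $\S$-construction then had to reveal non-Woodiness at some stage. The first observation is that since $\eta$ is a strong cutpoint of $\M$, every extender on the $\M$-sequence relevant to Woodiness of $\eta$ has length $<\eta$, so whether $\eta$ is Woodin in $\M$ is already determined in $\M|(\eta^+)^\M$; combined with $\M\models$ ``$\eta$ is not Woodin,'' this gives a bad function $f\colon\eta\to\eta$ in $\M|(\eta^+)^\M$ witnessing non-Woodiness.

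Under the contradictory assumption, I would argue that the $\S$-construction cannot halt before exhausting $\M|(\eta^+)^\M$. Indeed, whenever $\bar{\S}_\alpha \models$ ``$\eta$ is Woodin,'' Woodin's extender-algebra theorem ensures that $\M|\eta$ is generic over $\bar{\S}_\alpha$ for the $\eta$-extender algebra $\mathbb{B}_\alpha$ of $\bar{\S}_\alpha$. Since $\eta$ is a strong cutpoint, the extenders of $\bar{\S}_\alpha$ with index $<\eta$ coincide with those of $\N|\eta$, so $\mathbb{B}_\alpha$ is coded inside $L_\omega[\N|\eta]$. Thus clause~(2) of the $\S$-construction succeeds at every stage, and one obtains a final model $\S^\M(\N|\eta)$ satisfying $\S^\M(\N|\eta)[\M|\eta] = \M|(\eta^+)^\M$.

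The contradiction then comes from locating the $Q$-structure inside the construction. Let $\Q \trianglelefteq \M|(\eta^+)^\M$ be the least initial segment extending $\M|\eta$ with $\Q \models$ ``$\eta$ is not Woodin''; its minimality forces $\rho_\omega(\Q)\leq\eta$, so the counterexample to Woodiness is $\Sigma_n^\Q$-definable from a parameter in $\eta$ for some finite $n$. By the correspondence in the $\S$-construction there is $\alpha^*<\nu$ with $\bar{\S}_{\alpha^*}[\M|\eta]=\Q$. Since the extender sequence of $\bar{\S}_{\alpha^*}$ above $\eta$ agrees with that of $\Q$, and the forcing $\mathbb{B}_{\alpha^*}$ lies in $L_\omega[\N|\eta]\subseteq\bar{\S}_{\alpha^*}$, one can use the forcing relation for $\mathbb{B}_{\alpha^*}$ inside $\bar{\S}_{\alpha^*}$ to pull back the $\Sigma_n$-definition of the bad function, yielding $\bar{\S}_{\alpha^*}\models$ ``$\eta$ is not Woodin.'' Because $\S_{\alpha^*}$ differs from $\bar{\S}_{\alpha^*}$ at most by a top extender whose critical point exceeds $\eta$, this contradicts our assumption.

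The main obstacle is precisely this pull-back in the last step: translating the $\Sigma_n^\Q$-definable counterexample across the generic extension by $\M|\eta$ back into $\bar{\S}_{\alpha^*}$. This is the heart of the argument and is an instance of the fine-structural preservation built into the $\S$-construction (cf.~the proof of Lemma~1.5 of \cite{Selfiter}); everything else amounts to bookkeeping about how the construction progresses and how Woodiness is detected from the sub-$\eta$ extenders.
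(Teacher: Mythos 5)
Your scaffolding is reasonable, but the argument breaks precisely at the step you flag as the ``main obstacle,'' and appealing to fine-structural preservation from the $\S$-construction does not close it. That machinery tells you how extenders and initial segments translate between $\bar{\S}_{\alpha^*}$ and $\bar{\S}_{\alpha^*}[\M|\eta]=\Q$, but it does not by itself hand you a ground-model witness to non-Woodiness: the bad function $f\colon\eta\to\eta$ lives in $\Q$, and even though it is $\Sigma_n^\Q$-definable from a parameter below $\eta$, that definition is computed over $\Q$, whose predicate includes the generic $\M|\eta$. Invoking the forcing relation for $\mathbb{B}_{\alpha^*}$ is a move in the right direction, but by itself it does not produce a function in $\bar{\S}_{\alpha^*}$ witnessing non-Woodiness.

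The missing ingredients are a chain-condition argument and a transfer of Woodin-witnesses via background extenders. The paper argues directly with the last model $\S$ of the construction: assume for contradiction that $\eta$ is Woodin in $\S$; then $\S[\M|\eta]=\M|(\eta^+)^\M$, so fix $f\colon\eta\to\eta$ in $\M|(\eta^+)^\M$ witnessing non-Woodiness of $\eta$ there. Because the $\eta$-generator extender algebra is $\eta$-cc, there is $g\in\S$ dominating $f$. By the assumed Woodiness, some $E\in\vec{E}^\S$ witnesses Woodiness of $\eta$ for $g$. The extenders of $\S$ indexed above $\eta$ come, via the $\S$-construction, from the $\M$-sequence, so the corresponding background extender $E^*\in\vec{E}^\M$ witnesses Woodiness of $\eta$ for $f$ in $\M$, contradicting the choice of $f$. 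These two steps --- $\eta$-cc domination to replace $f$ by a ground-model $g$, and the $E$ versus $E^*$ comparison carrying the Woodin witness back to $\M$ --- are exactly what is missing from your proposal. The rest of your bookkeeping (non-Woodiness of $\eta$ is decided in $\M|(\eta^+)^\M$ since $\eta$ is a strong cutpoint, and the construction does not halt early) is fine, though the detour through the minimal $\Q$ and the stage $\alpha^*$ is unnecessary once one argues directly with the last model.
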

\begin{proof}
Let $\S$ be the last model of the $\S$-construction of $\M|(\eta^+)^\M$ over $\N|\eta$. Suppose $\eta$ is a Woodin cardinal of $\S$. Then $\M|\eta$ is generic for the $\eta$-generator version of the extender algebra of $L_{\omega}(\N|\eta)$. we also have that $\M|\eta$ is generic over $\S$ for the $\eta$-generator version of the extender algebra at $\eta$ and hence, $\S[\M|\eta]=\M|(\eta^+)^\M$. Thus, $\eta$ isn't Woodin in $\S[\M|\eta]$. Let $f:\eta\rightarrow \eta$ be the function in $\M$ witnessing that $\eta$ isn't Woodin. Then because the $\eta$-generator version of extender algebra is $\eta$-cc, there is $g\in \S$ which dominates $f$. Let $E\in \vec{E}^\S$ be the extender that witnesses that $\eta$ is Woodin for $g$. Then if $E^*$ is the background extender of $E$ then $E^*$ witnesses the Woodiness of $\eta$ for $f$ in $\M$, contradiction!
\end{proof}

Before moving on, we set up one last notation. Given a model $M$ of a fragment of $ZFC$ with a unique Woodin cardinal, we let $\mathbb{B}^M$ be the extender algebra of $M$ at its unique Woodin cardinal. If $G\subseteq \mathbb{B}^M$ then we let $x_G$ be the set naturally coded by $G$.

\section{Descriptive inner model theory}\label{dimt}

We let $\M_n$ be the minimal proper class mouse with $n$ Woodin cardinals. $\M_n^\#$ is the minimal mouse with last extender and with $n$ Woodin cardinals. Clearly, $\M_n$ is the result of iterating the last measure of $\M_n^\#$ through the ordinals. We let $\M_0=L$. In \cite{PWOIM}, Steel and Woodin computed the descriptive set theoretic complexity of the reals of $\M_n$. They showed that
\begin{center}
$C_{2n+2}(x)=\mathbb{R}^{\M_{2n}(x)}$
\end{center}
and
\begin{center}
$Q_{2n+3}(x)=\mathbb{R}^{\M_{2n+1}(x)}$.
\end{center}
We let
 \begin{displaymath}
   S_n(x) = \left\{
     \begin{array}{lr}
       C_{n+2}(x) :&  n \ is\ even \\
       Q_{n+2}(x) :&  n\ is\ odd \
     \end{array}
   \right.
\end{displaymath}
It is then clear that
\begin{center}
$S_n(x)=\mathbb{R}^{\M_n(x)}$.
\end{center}
Using standard techniques, we can now define $S_n(a)$ for any countable set $a$. More precisely, $b\in S_n(a)$ if for comeager many $g\subseteq Coll(\omega, a)$ letting $x_g$ be the real coding $a$ and $y_g$ be the real coding $b$ then $y_g\in S_n(x_g)$. 

We also let $\M_\omega$ be the minimal proper class mouse with $\omega$ Woodin cardinals and $\M_\omega^\#$ be the minimal mouse with $\omega$ Woodin cardinals and with a last extender. Then $\M_\omega$ is the result of iterating the last measure of $\M_\omega^\#$ through the ordinals. The following theorems ara what allow us to use inner model theoretic tools to investigate descriptive set theoretic objects. The proofs of these results can be found in \cite{OIMT}. 

\begin{theorem}[Woodin] Suppose $\M_\omega^\#$ exists and is $\omega_1$-iterable. Then $AD^{L(\mathbb{R})}$ holds. 
\end{theorem}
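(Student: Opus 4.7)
The plan is to reduce $AD^{L(\mathbb R)}$ to Woodin's Derived Model Theorem applied to (a countable iterable version of) $\M_\omega$, using genericity iterations to transfer determinacy from a symmetric derived model back down to $L(\mathbb R)^V$. First I would obtain $\M_\omega$ itself by iterating the top extender of $\M_\omega^\#$ out of the ordinals; since $\M_\omega^\#$ is $\omega_1$-iterable, so is $\M_\omega$, and this iterability descends to any countable hull via the standard Skolem hull and collapse argument. Let $\lambda$ denote the sup of the Woodin cardinals of $\M_\omega$.

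Next I would fix an arbitrary set $A\in L(\mathbb R)^V$ and show that the game $G_A$ is determined. By reflection inside $L(\mathbb R)^V$, pick a real $z$ and a formula $\phi$ with $A=\{x\in\mathbb R:L(\mathbb R)\models \phi(x,z)\}$. Choose a countable elementary substructure $X\prec \M_\omega|\Theta$ for $\Theta$ sufficiently large with $z\in X$, and let $\bar\M$ be its transitive collapse, with corresponding $\bar\lambda$ a limit of Woodins in $\bar\M$. By the hypothesis, $\bar\M$ has an $\omega_1$-iteration strategy $\Sigma$ with the Dodd--Jensen property.

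The core step is to invoke Woodin's Derived Model Theorem: if $G\subseteq\mathrm{Coll}(\omega,{<}\bar\lambda)$ is $\bar\M$-generic and $\mathbb R^*=\bigcup_{\alpha<\bar\lambda}\mathbb R^{\bar\M[G\restriction \alpha]}$, then $L(\mathbb R^*)\models AD^+$; in particular every set of reals in $L(\mathbb R^*)$ is determined. To connect this to $V$, I would use a Woodin-style genericity iteration of $\bar\M$ by $\Sigma$: given any countable sequence of reals $\vec x\in V$ (for instance a prospective play of $G_A$ together with $z$), the $\omega$ Woodin cardinals of $\bar\M$ permit one to iterate $\omega$ times, absorbing $\vec x$ into a symmetric generic $G^*$ over some iterate $\bar\M^*$ for $\mathrm{Coll}(\omega,{<}\bar\lambda^*)$. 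By the Derived Model Theorem applied to $\bar\M^*$, the corresponding $L(\mathbb R^*)$ contains $\vec x$ and satisfies $AD^+$.

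The final step, and the main obstacle, is the \emph{transfer} from $L(\mathbb R^*)$ back to $L(\mathbb R)^V$: I must argue that the theory of $L(\mathbb R)^V$ relative to $A$ and $z$ agrees with what the derived model computes, so that a winning strategy for $G_A$ produced in the derived model yields one in $V$. This proceeds by a generic absoluteness argument: the formula $\phi$ defines $A$ uniformly, and the genericity iteration can be arranged so that for each candidate strategy $\sigma$ in $\bar\M^*$ and each play of $G_A$ in $V$, the play is absorbed into the same symmetric derived model. Using the Dodd--Jensen property of $\Sigma$ to compare the various iterates, one then shows that the statement ``player I (resp.\ II) has a winning strategy in $G_A$'' reflects between $L(\mathbb R)^V$ and the derived model. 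Since the derived model satisfies $AD^+$, one of these statements holds, and hence $G_A$ is determined in $V$. As $A\in L(\mathbb R)^V$ was arbitrary, $AD^{L(\mathbb R)}$ follows. The technically delicate point, and the step where I would expect to spend most of the effort, is verifying that the genericity iteration can simultaneously capture $z$, a prospective play, and a definable strategy without sacrificing iterability, which is exactly where the full strength of $\omega$ Woodin cardinals (as opposed to finitely many) is needed.
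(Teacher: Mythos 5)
The paper does not actually prove this theorem; it states it as a known result of Woodin and refers the reader to \cite{OIMT} for the proof. So there is no in-text argument to compare against, and the question is only whether your sketch stands on its own and how it relates to the standard argument.

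Your outline follows the derived-model route rather than the tree-representation route that Steel gives in \cite{OIMT}, where one shows that every set of reals in $L(\mathbb R)$ is (weakly) homogeneously Suslin via trees of attempts built from iterates of $\M_\omega^\#$ and then quotes the Martin--Steel theorem that homogeneously Suslin sets are determined. The derived-model route is a legitimate alternative, but your execution of the transfer step has a genuine gap. You propose to take a prospective play $\vec x$ of $G_A$ and do a genericity iteration to absorb it into a symmetric extension of an iterate $\bar\M^*$; the derived model of $\bar\M^*$ then satisfies $AD^+$ and ``sees'' that play. But determinacy of $G_A$ in $V$ requires a \emph{single} strategy that works against all plays in $\mathbb R^V$, whereas the derived model of a countable iterate of a countable $\bar\M$ has only countably many reals, so a strategy living there is only defined on a countable fragment of the game tree. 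Capturing one play at a time and invoking $AD^+$ in each derived model separately does not produce such a strategy: the strategies coming from different iterations need not cohere, and the step where you say ``one then shows that $\exists$ winning strategy reflects between $L(\mathbb R)^V$ and the derived model'' is precisely the theorem being proved, not a routine absoluteness claim.

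To make a derived-model argument work you would instead need a length-$\omega_1$ $\mathbb R$-genericity iteration of $\M_\omega$ (not a countable hull -- note $\M_\omega^\#$ is already countable, so the Skolem hull step is idle) making \emph{all} of $\mathbb R^V$ the reals $\mathbb R^*$ of a single symmetric extension of the iterate $\N$, so that $L(\mathbb R)^V = L(\mathbb R^*)$ outright and $AD$ transfers literally. That iteration is where $\omega_1$-iterability (rather than mere countable iterability) is genuinely used, and it carries its own delicacies (one must control the strategy along a length-$\omega_1$ stack, and one must verify that the Derived Model Theorem can be applied to $\N$, whose own iterability is now in question). You correctly flag that the transfer is ``the technically delicate point,'' but the difficulty is not, as you suggest, in simultaneously capturing $z$, a play, and a strategy into one symmetric extension; it is in producing a single strategy that works in $V$ at all, which forces either the global length-$\omega_1$ iteration or the homogeneous-tree approach of \cite{OIMT}.
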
 
\begin{theorem}[Steel-Woodin]\label{mouse capturing} Suppose $\M_\omega^\#$ exists and is $\omega_1$-iterable. Let $\Gamma=(\Sigma^2_1)^{L(\mathbb{R})}$. Then for every countable transitive set $a$, 
\begin{center}
$C_{\Gamma}(a)=\mathbb{R}^{\M_\omega(a)}=\cup \{\mathbb{R}^\N: L(\mathbb{R})\models \N$ is a sound $\omega_1$-iterable $a$-mouse such that for some $n<\omega$, $\rho_n(\N)=a\}$.
\end{center}
\end{theorem}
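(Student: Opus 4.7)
I would prove the two equalities via a cycle of three containments,
\[ \bigcup\{\mathbb{R}^\N\} \;\subseteq\; \mathbb{R}^{\M_\omega(a)} \;\subseteq\; C_\Gamma(a) \;\subseteq\; \bigcup\{\mathbb{R}^\N\}, \]
where the outer union ranges over sound $\omega_1$-iterable $a$-mice $\N$ with $\rho_n(\N)=a$ for some $n<\omega$. For the first containment, given such an $\N$, coiterate with $\M_\omega(a)$ using the given iteration strategies. Since $\N$ projects to $a$, the $\N$-side of the coiteration is short; Dodd--Jensen applied to the unique strategies forces the $\M_\omega(a)$-side to stay trivial on the relevant initial segment, and soundness together with $\rho_n(\N)=a$ pins $\N$ as an initial segment of $\M_\omega(a)$. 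Hence $\mathbb{R}^\N \subseteq \mathbb{R}^{\M_\omega(a)}$.

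For $\mathbb{R}^{\M_\omega(a)} \subseteq C_\Gamma(a)$, each $x \in \mathbb{R}^{\M_\omega(a)}$ first appears at some countable stage $\beta < \omega_1^{\M_\omega(a)}$ in the construction of $\M_\omega(a)$. The statement ``$x$ is the real at stage $\beta$'' is $\Delta^2_1$ in $(a, \beta)$: the $\Sigma^2_1$ side is witnessed by producing a countable sound $\omega_1$-iterable $a$-mouse of height past $\beta$ containing $x$ (such mice exist as countable hulls of $\M_\omega(a)$, and $\omega_1$-iterability of a countable premouse is $\Pi^1_2$ over the reals), while the $\Pi^2_1$ side is witnessed by coiteration ruling out any other candidate real at stage $\beta$. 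Consequently $x$ lies in $C_\Gamma(a)$.

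The substantive third containment is $C_\Gamma(a) \subseteq \bigcup\{\mathbb{R}^\N\}$. Given $x \in C_\Gamma(a)$, fix a $\Sigma^2_1$-formula $\phi$ and a countable ordinal $\alpha$ with $\{x\}=\{y : \phi(a,\alpha,y)\}$, and code $\alpha$ into $a$. Run the Steel--Woodin genericity iteration of $\M_\omega(a)$ at its first Woodin $\delta_0$ to obtain an iterate $\N$ and an $\N$-generic $G$ for the extender algebra of $\N$ at $\delta_0$ with $x \in \N[G]$. Since $\N$ still carries $\omega$ Woodin cardinals above $\delta_0$, Woodin's derived-model / $\Sigma^2_1$-absoluteness theorem gives $L(\mathbb{R}^*)^{\N[G]}\models \phi(a,\alpha,x)$, where $\mathbb{R}^*$ denotes the symmetric reals at the remaining Woodins; homogeneity of the symmetric collapse combined with uniqueness of the $\phi$-solution $x$ pushes $x$ back into $\N$ itself, and since iterations do not add reals, $x \in \mathbb{R}^{\M_\omega(a)}$. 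A countable sound hull of $\M_\omega(a)$ containing $x$ then furnishes the witnessing mouse. The main obstacle is exactly this last step --- the $\Sigma^2_1$-correctness of the symmetric collapse of the $\omega$ Woodins of $\N$ and the homogeneity-based descent of $x$ into $\N$ --- which is the heart of Woodin's $L(\mathbb{R})$-analysis via $\M_\omega^\#$ and is developed in \cite{OIMT}.
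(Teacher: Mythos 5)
The paper does not actually prove this theorem --- it states it as known (``Theorem (Steel--Woodin)'') and refers the reader to \cite{OIMT} for a proof. So there is no ``paper's own proof'' to line up against; I can only assess your sketch on its merits. Your three-containment cycle and the tools you invoke --- comparison with $\M_\omega(a)$, a $\Delta^2_1$-in-a-countable-ordinal definition of each real, genericity iterations, the derived-model $\Sigma^2_1$-correctness theorem, and homogeneity of the symmetric collapse --- are indeed the standard ingredients, and the overall architecture is right. Two steps, however, are stated in a way that has real gaps.

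\textbf{First containment.} Your coiteration argument concludes $\N\trianglelefteq\M_\omega(a)$, but it only treats the case in which $\N$ lies strictly below $\M_\omega^\#(a)$ in the mouse order. A priori a sound countable $a$-mouse projecting to $a$ could have $\M_\omega^\#(a)\triangleleft\N$, in which case $\N$ is certainly not an initial segment of $\M_\omega(a)$ and $\mathbb{R}^\N$ would properly extend $\mathbb{R}^{\M_\omega(a)}$. What rules this out is precisely the requirement ``$L(\mathbb{R})\models\N$ is $\omega_1$-iterable'': one must argue that $L(\mathbb{R})$ cannot verify $\omega_1$-iterability of any $\N$ with $\M_\omega^\#(a)\trianglelefteq\N$ (roughly, a Tarski-style self-definability contradiction at $\Sigma^2_1$). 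You never use the $L(\mathbb{R})$-relativization of iterability anywhere in this direction, which is a sign that the case split is missing. The same relativization also needs a word in the second containment, where you produce a witnessing mouse: for the mouse to count toward $C_\Gamma(a)$ its iterability must be certified inside $L(\mathbb{R})$, which works because proper initial segments of $\M_\omega(a)$ projecting to $a$ are $\omega$-small and therefore $Q$-structure guided, hence $L(\mathbb{R})$-iterable --- but that observation should be made explicit.

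\textbf{Third containment.} You pass to the derived model of $\N[G]$ at the Woodins \emph{above} $\delta_0$ and then invoke homogeneity of that symmetric collapse. But homogeneity of the collapse over $\N[G]$ can only push $x$ into $\N[G]$, which is where you already had it; it does not descend $x$ into $\N$. The correct move is to take the derived model of $\N$ at \emph{all} $\omega$ of its Woodins $\delta_0<\delta_1<\cdots$: then $x$ is in the symmetric reals $\mathbb{R}^*$ because $x$ is generic over $\N$ for a forcing of size $<\delta_1$, $\Sigma^2_1$-correctness of $L(\mathbb{R}^*)$ together with uniqueness gives that $x$ is the unique $\phi$-solution in $L(\mathbb{R}^*)$, and homogeneity of $\mathrm{Coll}(\omega,<\sup_n\delta_n)$ \emph{over $\N$} decides each bit of $x$ by the empty condition, whence $x\in\N$. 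Your closing phrase ``since iterations do not add reals, $x\in\mathbb{R}^{\M_\omega(a)}$'' is also not needed and is not obviously justified in isolation; it is cleaner to take a countable hull of the iterate $\N$ itself (not of $\M_\omega(a)$) to furnish the witnessing mouse for the union, and then let the cycle of containments deliver $x\in\mathbb{R}^{\M_\omega(a)}$.
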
  

Let $\Sigma$ be the canonical iteration strategy of $\M_\omega$. Let
\begin{center}
$\mathcal{F}=\{ \P:$ there is a $\Sigma$-iterate $\N$ of $\M_\omega$ such that $\P=\N|(\nu^{+\omega})^\N$ where $\nu$ is a successor cardinal of $\N$ which is less than the least $\N$-cardinal which is strong to the least Woodin of $\N\}$.
\end{center}
Then it follows from \rthm{mouse capturing} that for every $\P\in \mathcal{F}$, $\Sigma_\P\in L(\mathbb{R})$. To see this, notice that whenever $\T$ is a tree on $\P$ of limit length and $b$ is a well-founded branch then $\Q(b, \T)$ exists. Now, if $\T$ is according to $\Sigma_\P$ and $b=\Sigma_\P(\T)$ then it follows from \rthm{mouse capturing} that $\Q(b, \T)$ has an iteration strategy in $L(\mathbb{R})$. Thus, $L(\mathbb{R})$ can uniquely identify $b$. The details of such arguments appear in Section 7 of \cite{OIMT}.

We can define $\leq_{\mathcal{F}}$ on $\mathcal{F}$ by $\P\leq_{\mathcal{F}} \Q$ iff there is $\a$ such that $\Q|\a\in I(\P, \Sigma_\P)$. Notice that if $\P\leq_{\mathcal{F}}\Q$ and $\a$ is such that $\Q|\a\in I(\P, \Sigma_\P)$ then for some $\nu<\a$, $\a=(\nu^{+})^\Q$. If $\P\leq_{\mathcal{F}}\Q$ then we let $i_{\P, \Q}:\P\rightarrow \Q|\alpha$ be the iteration embedding.

Notice that $\leq_{\mathcal{F}}$ is directed and hence, we can let $\M_\infty$ be the direct limit of $(\mathcal{F}, \leq_{\mathcal{F}})$. We then have that

\begin{theorem}[Steel, \cite{SteelHod}]\label{steel's thing} $L(\mathbb{R})\models \M_\infty=V_{\delta}^\H$ 
where $\d=\d(\utilde{\Sigma}^2_1)$. 
\end{theorem}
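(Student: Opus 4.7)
I would establish $L(\mathbb{R})\models \M_\infty = V_\delta^\H$ by proving both inclusions, with the computation $\M_\infty \cap \mathrm{Ord} = \delta$ serving as the bridge. For $\M_\infty \subseteq \H$ the key observation is already in place: each strategy $\Sigma_\P$ for $\P \in \mathcal{F}$ lies in $L(\mathbb{R})$. Hence the system $(\mathcal{F}, \leq_{\mathcal{F}})$ and its transition maps $i_{\P, \Q}$ are definable in $L(\mathbb{R})$ from no parameters, so every $y \in \M_\infty$ is $OD^{L(\mathbb{R})}$: writing $y = i_{\P, \M_\infty}(x)$ with $\P \in \mathcal{F}$ and $x \in \P$, an ordinal code for the pair $(\P, x)$ defines $y$ in $L(\mathbb{R})$.

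To show $\M_\infty \cap \mathrm{Ord} = \delta$, for the upper bound I would fix $\xi \in \M_\infty$ and consider the prewellordering on representatives $(\Q, \bar\xi)$ with $i_{\Q, \M_\infty}(\bar\xi) = \xi$ induced by iterating two representatives into a common $\R \in \mathcal{F}$ (by directedness of $\leq_{\mathcal{F}}$) and comparing their images; using the scale property of $\Sigma^2_1$ and the $\Sigma^2_1$-definability of the system, this prewellordering is $\utilde{\Sigma}^2_1$ of length at least $\xi$, forcing $\xi < \delta$. For the lower bound, any $\utilde{\Sigma}^2_1$ prewellordering of length $\lambda$ arises from a $\Sigma^2_1$-scale whose tree can be read off an $\M_\omega$-iterate $\N$; cutting $\N$ down to a $\P \in \mathcal{F}$ and applying $i_{\P, \M_\infty}$ produces an ordinal of $\M_\infty$ of size at least $\lambda$.

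The inclusion $V_\delta^\H \subseteq \M_\infty$ is the main obstacle. Given $A \in V_\delta^\H$, after a transitive collapse I regard $A$ as a bounded $OD^{L(\mathbb{R})}$ subset $B \subseteq \kappa$ for some $\kappa < \delta$, and write $\kappa = i_{\P, \M_\infty}(\bar\kappa)$ for some $\P \in \mathcal{F}$. I would then apply \rthm{mouse capturing} to the characteristic function of $B$, with a countable parameter coding $\bar\kappa$, to produce a sound $\omega_1$-iterable mouse $\Q'$ containing $B$. Comparing $\Q'$ with a suitable further iterate of $\P$ absorbs $\Q'$ into some $\R \in \mathcal{F}$, and the resulting $\bar B \in \R$ satisfies $i_{\R, \M_\infty}(\bar B) = B$, placing $B$ in $\M_\infty$. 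The hard part is precisely this absorption step, namely converting an $OD^{L(\mathbb{R})}$-definition of $B$ into a definition internal to an $\M_\omega$-iterate lying in $\mathcal{F}$; \rlem{uniqueness of branches} together with the Dodd-Jensen property of $\Sigma$ then ensures that $\bar B$ is independent of the chosen representative and that the direct limit map recovers $B$ correctly.
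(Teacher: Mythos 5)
The paper does not prove this theorem. It is stated as an attribution to Steel's computation of $\H^{L(\mathbb{R})}$ below $\Theta$ (the citation to \cite{SteelHod}) and used as a black box throughout \rsec{dimt} and later. So there is no ``paper's own proof'' to compare your sketch against; I can only assess the sketch on its own terms.

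Your overall decomposition (definability of $\M_\infty$ giving $\M_\infty\subseteq\H$, a prewellordering argument pinning $\M_\infty\cap\mathrm{Ord}$ at $\delta$, and then the reverse inclusion) is the right shape, and the first direction is essentially sound: the system $(\mathcal{F},\leq_{\mathcal{F}})$ and the maps $i_{\P,\Q}$ are $L(\mathbb{R})$-definable precisely because each local strategy $\Sigma_\P$ is in $L(\mathbb{R})$, which the paper establishes via $\Q$-structures right after \rthm{mouse capturing}. The genuine gap is in the inclusion $V_\delta^\H\subseteq\M_\infty$, and you correctly flag it as the obstacle but your resolution does not go through. \rthm{mouse capturing} is a statement about the reals of $\M_\omega(a)$ for \emph{countable transitive} $a$; it does not produce mice over a set of uncountable rank and says nothing about an arbitrary $OD$ set $B\subseteq\kappa$ with $\kappa<\delta$ possibly far above $\omega_1$. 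Your proposed reduction, namely picking $\P\in\mathcal{F}$ with $\kappa\in\mathrm{ran}(i_{\P,\infty})$ and then trying to capture a preimage $\bar B\subseteq\bar\kappa$ over the countable $\P$, silently assumes that $B$ already lies in $\mathrm{ran}(i_{\P,\infty})$ for some fixed $\P$; showing that every $OD$ bounded subset of $\delta$ is \emph{eventually} in the range of the direct limit maps is exactly the content of the theorem, not something you can assume. Steel's actual argument handles this via term relations and genericity iterations: one shows that an $OD^{L(\mathbb{R})}$ definition of $B$ is absorbed into the symmetric-collapse extensions of iterates of $\M_\omega$, so that a suitable term for $B$ lives in each $\P\in\mathcal{F}$ and is moved coherently by the system, whence $B\in\M_\infty$. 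That mechanism (and with it the Dodd--Jensen uniqueness you invoke, which you apply before you have the term to compare) is the missing ingredient; without it the sketch does not close.
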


Woodin extended this result to compute the full $\H$ of $L(\mathbb{R})$. We refer the reader to \cite{WoodinHod} for more on Woodin's work on $\H^{L(\mathbb{R})}$. It is important to note that the existence of $\M_\omega^\#$, which is a tiny bit stronger than $AD^{L(\mathbb{R})}$, is unnecessary and all the results in this paper can be proved only from $AD^{L(\mathbb{R})}$. Nevertheless, it is convenient and aesthetically more pleasant to assume that $\M_\omega^\#$ exists and we will do so whenever we wish. Experts will have no problem seeing how to remove this assumption. We refer the reader to \cite{OIMT} for an expanded version of this short summary of inner model theory. \cite{OIMT} also proves most of the results stated in this section without assuming the existence of $\M_\omega^\#$ but just $AD^{L(\mathbb{R})}$.

\section{The main theorem}\label{the main theorem}

By a result of Martin (see \cite{Martin83}) and Neeman (see \cite{Neeman95}), for $k\geq 1$, a set of reals $A$ is $\Game^{k}(\omega\cdot n-\utilde{\Pi}^1_1)$ iff there is $m\in \omega$, a real $z$ and a formula $\phi$ such that
\begin{center}
$x\in A\iff \M_{k-1}(x, z)\models \phi[x, z, s_m]$.
\end{center}
We let $\Gamma_{k, m}(z)$ be the set of reals $A$ such that there is a formula $\phi$ such that, letting $s_m$ be the sequence of the first $m$ uniform indiscernibles, 
\begin{center}
$x\in A\iff \M_{k-1}(x, z)\models \phi[x, z, s_m]$.
\end{center}
We let $\Gamma_{k, m}=\Gamma_{k, m}(0)$ and $\utilde{\Gamma}_{k, m}=\cup_{z\in \mathbb{R}}\Gamma_{k, m}(z)$. Also, we let $\utilde{\Gamma}_k=\cup_{m<\omega}\utilde{\Gamma}_{k, m}$.

In \cite{Hjorth01}, Hjorth computed the sup of the lengths of $\utilde{\Gamma}_{1, m}$-prewellorderings. He showed that
\begin{center}
$\d(\utilde{\Gamma}_{1, m})\leq u_{m+2}$.
\end{center}
and therefore,
\begin{center}
$\kappa^1_3=\aleph_\omega=\d(\utilde{\Gamma}_{1})$\footnote{It is not hard to see that the standard prewellordering of the $\{x^\# : x\in \mathbb{R}\}$ has length $\kappa^1_3$, i.e, let $\phi(n, m, x^\#)=\tau_n^{L[x]}(x, s_m)$ where $\la \tau_n:n<\omega\ra$ is some enumeration of the terms in the appropriate language. Then $\phi$ has length $u_\omega=\k^1_3$ and for each $m$ letting $\phi_m$ be the prewellordering given by $\phi_m(n, x^\#)=\phi(n, m. x^\#)$, we have that $\phi_m\in \Gamma_{1, m+1}$. Thus, we indeed have an equality.}.
\end{center}

In this paper, assuming $AD$, we compute $\d(\utilde{\Gamma}_{k, m})$. First let
\begin{center}
$a_{k,m}=\d(\utilde{\Gamma}_{k, m})$.
\end{center}
Here is our main theorem.

\begin{theorem}[Main Theorem]\label{main theorem} Assume $AD$ and let $k$ be an integer. Then
\begin{center}
$\sup_{m<\omega} a_{2k+1, m} =\k^1_{2k+3}$.
\end{center}
\end{theorem}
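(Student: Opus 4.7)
My plan is to establish \rthm{main theorem} by proving the two inequalities $\sup_m a_{2k+1,m} \leq \k^1_{2k+3}$ and $\sup_m a_{2k+1,m} \geq \k^1_{2k+3}$ separately. Both arguments proceed through the directed system of $\Sigma$-iterates of $\M_{2k+1}$ relativized to a real parameter, where $\Sigma$ is the canonical iteration strategy. Write $\M_\infty(z)$ for the direct limit of this relativized system; by the generalization of \rthm{steel's thing} to odd levels, $\M_\infty(z)$ plays the role of $\H$ at the $(2k+3)$-rd projective level.

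For the upper bound, fix $m$, $z \in \mathbb{R}$, a formula $\phi$, and a set $A \in \Gamma_{2k+1,m}(z)$ equipped with an associated norm $\psi$. By the Martin--Neeman characterization recalled at the start of \rsec{the main theorem}, membership in $A$, and more generally the value $\psi(x)$, is computed inside $\M_{2k}(x,z)$ from the parameter $s_m$. I would first rewrite $\psi(x)$ as $\tau^{\M_{2k+1}(x,z)}(x, s_m)$ for a term $\tau$ definable from $\phi$, using that $\M_{2k}(x,z)$ is definable inside $\M_{2k+1}(x,z)$ as its bottom $\M_{2k}$-cut. Applying $i_{\M_{2k+1}(x,z), \M_\infty(z)}$ and invoking \rlem{uniqueness of branches} together with \rlem{partial agreement} to see that the image does not depend on the particular iterate chosen, one gets that the length of $\psi$ is at most the $\M_\infty(z)$-image of the fixed ordinal $\tau^{\M_{2k+1}(z)}(s_m)$. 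The concluding step is to identify this image as an ordinal strictly below $\k^1_{2k+3}$: this follows from the Steel--Woodin analysis of the $\M_{2k+1}$-directed system, which forces every $\M_\infty(z)$-ordinal definable from finitely many images of uniform indiscernibles to lie below $\k^1_{2k+3}$. Taking $\sup$ over $m$ preserves the bound.

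For the lower bound I would adapt the Hjorth-style prewellordering from the footnote in \rsec{the main theorem} one Woodin-level up. Fix a recursive enumeration $\la \tau_n : n < \omega\ra$ of terms in the mouse language, and for each $m$ set
\begin{center}
$\psi_m(n, x^*) = \tau_n^{\M_{2k+1}(x)}(x, s_m)$, defined on $\{(n, x^*) : x^* = \M_{2k+1}(x)^\# \text{ exists}\}$.
\end{center}
Mouse capturing (\rthm{mouse capturing}) together with the Martin--Neeman representation shows $\psi_m$ is a prewellordering in $\utilde{\Gamma}_{2k+1, m+c}$ for a fixed absolute constant $c$ (from coding $\tau_n$ and a sharp by a real), so its length is at most $a_{2k+1, m+c}$. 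On the other hand, computed through the iteration maps into $\M_\infty(x)$, the length of $\psi_m$ is $\sup\{ i_{\M_{2k+1}(x), \M_\infty(x)}(\tau_n^{\M_{2k+1}(x)}(x, s_m)) : n < \omega, x \in \mathbb{R}\}$, and as $m \to \omega$ these sups are cofinal in $\k^1_{2k+3}$ by the same characterization of ordinals below $\k^1_{2k+3}$ used for the upper bound. The additive shift $c$ is absorbed by the outer supremum.

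The hard part will be the tight identification of the bounding ordinal in both directions as exactly $\k^1_{2k+3}$, as opposed to the successor $\d^1_{2k+3}$ or some larger indiscernible-definable ordinal. This rests on the full strength of the Steel--Woodin style computation for the $\M_{2k+1}$-directed system, matched against the Moschovakis--Jackson description of $\k^1_{2k+3}$ via lengths of $\utilde{\Pi}^1_{2k+3}$-norms and uniform indiscernibles. A secondary subtlety is the bookkeeping of the constant $c$: I need a uniform complexity estimate showing that every term $\tau$ appearing in the upper-bound analysis yields a prewellordering lying in $\utilde{\Gamma}_{2k+1, m'}$ for an $m'$ depending only on $\tau$ (not on $x$ or $z$), so that the two bounds match up cofinally in $m$.
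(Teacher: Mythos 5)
Your proposal correctly identifies the high-level skeleton (the directed system of iterates of $\M_{2k+1}$, two inequalities, and Woodin-style identification of the limit's Woodin cardinal with $\k^1_{2k+3}$), but the upper-bound argument has a genuine gap, and in fact the core technical mechanism of the paper is absent from your outline.

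The central difficulty, which your upper bound elides, is that one must compare norm values across \emph{different} reals $x$. The map $i_{\M_{2k+1}(x,z), \M_\infty(z)}$ you invoke does not exist: $\M_{2k+1}(x,z)$ is a mouse over $(x,z)$ while $\M_\infty(z)$ is the direct limit of iterates of $\W_z$, a mouse over $z$, and there is no iteration embedding from the former to the latter. What the paper does instead is \emph{internalize} the directed system into $\M_{2k}(x)$ for a real $x$ coding $\W_z$ (Subsection~\ref{internalizing the directed system}): it shows $\d^{\M_{\infty,\k}}=(\k^+)^{\M_{2k}(x)}$, using $\S$-constructions, forcing with the extender algebra, and the notion of a ``stable code.'' The upper bound $a_{2k+1,m}\leq b_{2k+1,m+1}$ (Lemma~\ref{sups}) is then proved by a contradiction argument: given $\leq^*\in\utilde{\Gamma}_{2k+1,m}$, one uses Moschovakis' coding lemma to produce a $\Sigma^1_{2k+3}$ set $B$ relating $\leq^*$ to the directed-system prewellordering $R_{z,r}$, and then the stable-code argument (build a Dodd--Jensen descending sequence if stability failed) yields an order-preserving map of $\leq^*$ into $R_{w,m+1}$. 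Neither the coding lemma nor the stability argument appears in your proposal, and without them the upper bound does not go through — the naive ``apply the iteration map and read off a term value'' step cannot even be formulated.

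Two further pieces you gesture at but do not supply: (i) the identification $\sup_m b_{2k+1,m}=\k^1_{2k+3}$ is precisely \rthm{woodins thing}, a separate theorem with a two-sided Suslin/tree argument (every $\Pi^1_{2k+2}$ set is $\d_{\infty,0}$-Suslin; conversely a tree for the $\Pi^1_{2k+2}$-iterability set bounds $\d^f_{\infty,z}$), not something that follows immediately from \rthm{steel's thing}; and (ii) your lower-bound prewellordering needs the $s$-iterability machinery to show that comparing $\tau_n^{\M_{2k+1}(x)}(x,s_m)$ with $\tau_{n'}^{\M_{2k+1}(y)}(y,s_m)$ is $\utilde{\Gamma}_{2k+1,m+1}$; the paper handles this through the definability of $R_{w,m}$ in $\Gamma_{2k+1,m+1}(w)$, which rests on carrying out the comparison inside $\M_{2k}$-structures and correctly identifying the $T^\P_{s,i}$ guiding sets there. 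Your ``absolute constant $c$'' bookkeeping concern is resolved in the paper as the sharp $m\mapsto m+1$ shift, but only because of this internalization.
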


We will prove the theorem using directed systems of mice. Our proof relies on a generalization of Woodin's analysis of $\H^{L[x][g]}$. The proof is divided into subsections. The proof presented here suggests further applications of the directed systems in descriptive set theory and we will end with a discussion of projects that are left open. We start with introducing the direct limit associated with $\M_{n}$'s.

\section{The directed system associated to $\M_{n}$.}

In this section, we analyze the length of the prewellordering given by the iterates of $\M_{2n+1}$. As it turns out, the even case, i.e., the prewellordering associated to $\M_{2n}$s, doesn't give much beyond the results of \cite{OIMT}. Nevertheless, we make all the definitions for arbitrary $n$. The prewellordering associated with the iterates of $\M_{n+1}$ that we are interested in is the following.

For any iterate $\P$ of $\M_{n+1}$ we let $\d^\P$ be the least Woodin of $\P$. Let $\Sigma$ be the canonical iteration strategy of $\M_{n+1}$. If $\P\in I(\M_{n+1}, \Sigma)$ and $\Q\in I(\P, \Sigma_\P)$ then we let $i_{\P, \Q}$ be the iteration embedding. We then define a prewellordering $R^+_n$ of the set
\begin{center}
$\{ (\P, \a) : \P\in I(\M_{n+1}, \Sigma) \wedge \a<\d^\P\}$
\end{center}
by $(\P, \a)R^+_n (\Q, \b)$ iff $\Q\in I(\P, \Sigma_\P)$ and $i_{\P, \Q}(\a)\leq \b$. Clearly $R^+_n$ is a prewellordering. One problem with $R^+_n$ is that it is a prewellordering of uncountable objects and hence, cannot be regarded as a prewellordering of the reals. Here is how one can find an equivalent prewellordering of countable objects.

We let $\W_n=\M_{n+1}|(\d^{+\omega})^{\M_{n+1}}$ and define the equivalent of $R^+_n$ on the set
\begin{center}
$\mathcal{J}_n^+=\{(P, \a): \P\in I(\W_n, \Sigma_{\W_n})\wedge\a<\d^\P\}$.
\end{center}
We set $(\P, \a)R_n^+ (\Q, \b)$ iff $\Q \in I(\P, \Sigma_{\P})$ and $i_{\P, \Q}(\a)\leq \b$. It is not hard to see that $R^+_n$ is essentially the old $R^+_n$. Two questions then immediately come up: 1. What is the length of $R_n^+$? and 2. What is the complexity of $R_n^+$? It is not hard to find an upper bound for $R_n^+$.

\begin{lemma} $\card{R^+_n}<\d^1_{n+3}$.
\end{lemma}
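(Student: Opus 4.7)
The plan is to place $R^+_n$ inside a projective pointclass strictly below $\delta^1_{n+3}$ and then conclude by the standard boundedness principle for prewellorderings. Since $\W_n$ is a countable premouse, every $\P \in I(\W_n, \Sigma_{\W_n})$ is the last model of a countable stack of countable trees on $\W_n$ and so admits a real code; thus $R^+_n$ may be regarded as a prewellordering on a set of reals, and it remains only to bound its complexity.

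To do so I would work with the canonical iteration strategy $\Sigma$ of $\M_{n+1}$. By the comparison lemma together with the Dodd--Jensen property of $\Sigma$, the clause ``$\Q \in I(\P, \Sigma_\P)$ and $i_{\P, \Q}(\alpha) \leq \beta$'' is equivalent to the coiteration of $\P$ and $\Q$ (via $\Sigma_\P$ and $\Sigma_\Q$) leaving $\Q$ stationary while sending $\P$ non-dropping into an initial segment of $\Q$ with the image of $\alpha$ bounded by $\beta$. This reduces the task to bounding the complexity of $\Sigma_{\W_n}$ on countable trees, uniformly in the iterate. For this I would invoke Q-structures: the correct branch at a tree $\T$ of limit length on an iterate of $\W_n$ is the unique cofinal branch $b$ for which $\Q(b, \T)$ agrees with the canonical Q-structure of $\M(\T)$, the existence of which is guaranteed by \rlem{s-constructions give q-structures} using that $\M_{n+1}$ has $n+1$ Woodins and $\delta^{\W_n}$ is the least of them. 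These Q-structures are premice of complexity at most $\M_n$, which by mouse capturing at the projective level (the analogue of \rthm{mouse capturing} underlying the identifications $\mathbb{R}^{\M_n} = S_n$) sit inside the pointclass captured at level $\utilde{\Pi}^1_{n+2}$. Exploiting the uniqueness of the correct branch both existentially (``some cofinal branch produces the right Q-structure'') and universally (``every cofinal branch with a wellfounded model that produces the right Q-structure is this one'') yields $\Sigma_{\W_n} \in \utilde{\Delta}^1_{n+3}$, whence $R^+_n \in \utilde{\Delta}^1_{n+3}$.

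Finally, under $AD$ every $\utilde{\Delta}^1_{n+3}$ prewellordering has length strictly less than $\delta^1_{n+3}$, since $\delta^1_{n+3}$ is a regular cardinal and the non-attained supremum of the lengths of prewellorderings in this pointclass; this gives $|R^+_n| < \delta^1_{n+3}$. The main technical obstacle I anticipate is bookkeeping the projective complexity of the Q-structures produced by the $\S$-construction of \rlem{s-constructions give q-structures}, as the construction can extend far past $\delta(\T)$; one must check that this extension, and more generally the $\S$-construction translation between $\M(\T)$-mice and $\N$-mice used to identify Q-structures, does not push the branch-selection complexity beyond $\utilde{\Delta}^1_{n+3}$. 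This ultimately reduces to the Steel--Woodin computation of $C_{2n+2}$ and $Q_{2n+3}$ and is thus already contained in the descriptive inner model theory summarized in \rsec{dimt}.
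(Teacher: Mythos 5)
Your high-level strategy matches the paper's: bound the projective complexity of the prewellordering and then invoke a boundedness principle. The paper's (very brief) proof observes that the $\M_n^\#$-operator is $\Pi^1_{n+2}$, so $\mathcal{J}_n^+$ and $R^+_n$ lie in $\utilde{\Sigma}^1_{n+3}$ --- the existential real quantifier over the witnessing finite stack absorbs the $\Pi^1_{n+2}$ branch checks --- and the length bound then follows from the Kunen--Martin theorem applied to $\utilde{\Sigma}^1_{n+3}$ wellfounded relations.

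You aim for the strictly stronger claim $\Sigma_{\W_n}\in\utilde{\Delta}^1_{n+3}$, and your justification for the $\Pi^1_{n+3}$ side has a real gap. You argue that the correct branch of a limit tree $\T$ on an iterate of $\W_n$ is ``the unique cofinal branch $b$ for which $\Q(b,\T)$ agrees with the canonical Q-structure of $\M(\T)$.'' This is correct only for \emph{short} trees. For \emph{maximal} trees --- those where $\M_n(\M(\T))\models$ ``$\d(\T)$ is Woodin,'' so no Q-structure exists --- this characterization is vacuous, and the branch is not pinned down by any Q-structure data at all. Such trees genuinely arise here: the paper's entire machinery of $s$-iterability, strong $s$-iterability, and the $\M_{\infty,\k}$ system exists precisely because $\Sigma_{\W_n}$ on maximal trees cannot be identified inside $\M_n(x)$, and the branch of a maximal tree is recovered only after the fact via Lemmas~\ref{uniqueness of branches} and~\ref{partial agreement} together with the $s$-iterability approximations. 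So ``exploiting the uniqueness of the correct branch both existentially and universally'' does not yield the universal ($\Pi^1_{n+3}$) clause. Since your final boundedness step (``the $\utilde{\Delta}^1_{n+3}$-sup is $\d^1_{n+3}$ and is not attained'') depends on having the $\Delta$ form, the argument as written does not close. The fix is to drop the $\Delta$ claim entirely: verifying that a \emph{given} stack is correctly guided (Q-structure at short limits, $\M_n$-hull identity at maximal limits and at the final step) is $\Pi^1_{n+2}$ per tree, so $R^+_n$ (and its strict part) is $\utilde{\Sigma}^1_{n+3}$, and Kunen--Martin gives $\card{R^+_n}<\d^1_{n+3}$ directly. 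A minor point: the appeal to \rlem{s-constructions give q-structures} is not really what is needed here; that lemma is an internal translation tool, whereas the complexity bound rests on the Steel--Woodin $\Pi^1_{n+2}$ computation of the $\M_n^\#$-operator summarized in \rsec{dimt}, which is the fact the paper actually cites.
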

\begin{proof}
Here is the outline of the proof. Because $x\rightarrow \M_n^\#(x)$ is a $\Pi^1_{n+2}$ (see \cite{PWOIM}), we get that $\mathcal{J}_n^+$ is $\Sigma^1_{n+3}(\W_n)$ (i.e., $\Sigma^1_{n+3}(x)$ for any code $x$ of $\W_n$). The complexity essentially comes from the fact that we require $i_{\P, \Q}$ be the correct iteration embedding and to say that we need to refer to $x\rightarrow \M_n^\#(x)$ operator. 
\end{proof}

To prove our main theorem we need to somehow internalize $R^+_n$ to $\M_n(x)$ where $x$ is any real coding $\W_n$. Notice that $\M_n(x)$ doesn't know the strategy of $\W_n$ and hence, it doesn't know how to define its own version of $R^+_n$. We will define an enlargement of $R^+_n$ which $\M_n(x)$ can define and we will show that the enlargement has the same length as $R^+_n$. 

We now start introducing concepts that we will need in order to internalize $R^+_n$ to $\M_n(x)$. Most of these concepts have their origins in Woodin's unpublished work on $\H^{L[x][g]}$. Various sources have expositions of similar concepts. For example, \cite{WoodinHod} has most of what we need excepts for the full hod limit. None of these concepts appeared for projective mice such as $\M_n$ and here we take a moment to develop these ideas. We start with suitability. First recall the $S_n$ operator from \rsec{dimt}.

\begin{definition}[$n$-suitable]\label{$n$-suitable}
$\P$ is \emph{$n$-suitable} if there is $\d$ such that
\begin{enumerate}
\item $\P\models ZFC-Replacement$,
\item $\P\models ``\d$ is the only Woodin cardinal",
\item $o(\P)=\sup_{i<\omega}(\d^{+i})^\P$,
\item for every strong cutpoint cardinal $\eta$ of $\P$, $S_n(\P|\eta)=\P|(\eta^+)^\P$.
\end{enumerate}
\end{definition}

If $\P$ is $n$-suitable then we let $\d^\P$ be the $\d$ of \rdef{$n$-suitable}. Clearly $\W_n$ is a $n$-suitable premouse. Moreover, if $\Q\in I(\W_n, \Sigma_{\W_n})$ then $\Q$ is $n$-suitable because $i_{\W_n, \Q}$ can be lifted to $i:\M_{n+1}\rightarrow \M_n(\Q)$. Sometimes we will just say that $\P$ is $n$-suitable implying that it is $n$-suitable for some $n$.

To approximate the iteration strategy of $\W_n$ inside $\M_n(x)$, the notion of $s$-iterability is used. We now work towards introducing it.
Given an iteration tree $\T$ on an $n$-suitable $\P$, we say $\T$ is \textit{correctly guided} if for every limit $\a<lh(\T)$, if $b$ is the branch of $\T\rest \a$ chosen by $\T$ and $\Q(b, \T\rest \a)$ exists then $\Q(b, \T\rest \a)\trianglelefteq \M_n(\M(\T\rest \a))$. $\T$ is \textit{short} if there is a well-founded branch $b$ such that $\T^\frown \{\M^\T_b\}$ is correctly guided. $\T$ is \textit{maximal} if $\T$ is not short.

Suppose $\P$ is $n$-suitable. We say $\la \T_i, \P_i : i<m\ra$ is a \textit{finite correctly guided stack} on $\P$ if
\begin{enumerate}
\item $\P_0=\P$,
\item $\P_i$ is $n$-suitable and $\T_i$ is a correctly guided tree on $\P_i$ below $\d^{\P_i}$,
\item for every $i$ such that $i+1< m$ either $\T_i$ has a last model and $i^\T$-exists or $\T$ is maximal, and
    \begin{enumerate}
    \item if $\T_i$ has a last model then $\P_{i+1}$ is the last model of $\T_i$,
    \item if $\T_i$ is maximal then $\P_{i+1}=\M_n(\M(\T_i))|(\d(\T_i)^{+\omega})^{\M_n(\M(\T_i))}$.
    \end{enumerate}
\end{enumerate}
We say $\Q$ is the last model of $\la \T_j, \P_j : i<k\ra$ if one of the following holds:
 \begin{enumerate}
 \item $\T_{k-1}$ has a last model and $\Q$ is the last model of $\T_{k-1}$,
 \item $\T_{k-1}$ is short and there is a cofinal well-founded branch $b$ such that $\Q(b, \T)$ exists and is iterable and $\Q=\M^\T_b$,
 \item $\T_{k-1}$ is maximal and
 \begin{center}
$\Q=\M_n(\M(\T_{k-1}))|(\d(\T_{k-1})^{+\omega})^{\M_n(\M(\T_{k-1}))}$.
\end{center}
 \end{enumerate}

We say $\Q$ is a \textit{correct iterate} of $\P$ if there is a correctly guided finite stack on $\P$ with last model $\Q$.

Suppose $\P$ is $n$-suitable and $s=\la \a_0, ...,\a_m\ra$ is a finite sequence of ordinals. Then we let $T^\P_{s, k}\subseteq [((\d^\P)^{+k})^\P]^{<\omega}\times \omega$ be the set
\begin{center}
$(t, \phi)\in T^\P_{s, k} \iff \phi$ is $\Sigma_1$ and $\M_n(\P)\models \phi[t, s]$.
\end{center}
\begin{center}
$\gg^\P_{s}=Hull^\P_1( \{ T^\P_{s, i} : i\in \omega \} )\cap \d^\P$.
\end{center}
Notice that
\begin{center}
$\gg^\P_{s}=Hull^\P_1(\gg_s^\P\cup \{ T^\P_{s, i} : i\in \omega\})\cap \d^\P$.
\end{center}
Let
\begin{center}
$H_{s}^\P =Hull^\P_1(\gg^\P_{s}\cup \{ T^\P_{s, i} : i\in \omega \})$.
\end{center}

If $s=s_m$, then we let $\gg_{m}^\P=\gg_{s_m}^\P$ and $H_{m}^\P=H_{s_m}^\P$.
The following is not hard to show.

\begin{lemma}\label{ggs sup up}
$\sup_{n<\omega}\gg_n^\P=\d^\P$.
\end{lemma}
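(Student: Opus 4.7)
The plan is to argue by contradiction. Suppose $\gg := \sup_{n<\omega}\gg_n^\P < \d^\P$, and set $H := \bigcup_{n<\omega} H_n^\P$. The first observation is that the hulls form an increasing chain $H_n^\P \subseteq H_{n+1}^\P$: since $s_n$ is an initial segment of $s_{n+1}$, any $\Sigma_1$ formula $\phi(t, s_n)$ can be viewed as a $\Sigma_1$ formula $\phi'(t, s_{n+1})$ that simply ignores the extra coordinates, so $T^\P_{s_n, k}$ is recursive in $T^\P_{s_{n+1}, k}$ and hence lies in $H_{n+1}^\P$. Consequently $H \preceq_{\Sigma_1} \P$ and $H \cap \d^\P = \gg$ as an initial segment of $\d^\P$.

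The next step is to place $\d^\P$ itself into $H$. For each $k$, the defining condition $T^\P_{s_n, k} \subseteq [((\d^\P)^{+k})^\P]^{<\omega}\times \omega$ lets us $\Sigma_1$-recover $((\d^\P)^{+k})^\P$ from $T^\P_{s_n, k}$ as the least $\eta$ for which $T^\P_{s_n, k} \subseteq [\eta]^{<\omega}\times\omega$. Therefore $((\d^\P)^{+k})^\P \in H$ for every $k$, and $\d^\P$, being the largest $\P$-cardinal strictly below $((\d^\P)^{+1})^\P$, is itself in $H$. Form the anticollapse $\sigma:\bar\P\to \P$ of $H$. Because $\d^\P\in H$ and $H\cap \d^\P = \gg$, the map $\sigma$ is $\Sigma_1$-elementary with $\cp(\sigma)=\gg$ and $\sigma(\gg)=\d^\P$; moreover $\bar\P\vert \gg = \P\vert \gg$.

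The contradiction then comes by transferring the $n$-suitable structure of $\P$ down to $\bar\P$: via $\sigma$, $\bar\P$ is itself an $n$-suitable premouse with $\d^{\bar\P}=\gg$, so in particular $\bar\P\models \textrm{``}\gg \textrm{ is Woodin''}$. Since $\bar\P$ and $\P$ agree up through $\gg^{+\omega}$ (obtained via $\S$-constructions applied over the shared initial segment $\bar\P\vert\gg = \P\vert\gg$, using \rprop{s-constructions prop}), the extenders on $\bar\P$'s sequence that witness Woodinness of $\gg$ must in fact appear on $\P$'s sequence as well. This produces Woodin-witnessing extenders for $\gg$ inside $\P$, contradicting the defining property of $n$-suitability that $\d^\P$ is the \emph{unique} Woodin of $\P$.

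The main obstacle is the transfer step, since Woodinness is a $\Pi_2$ scheme and hence not directly preserved by $\Sigma_1$-elementarity alone. The resolution is to observe that the relevant witnesses are precisely the extenders on the $\bar\P$-sequence with critical points cofinal in $\gg$, whose existence \emph{is} $\Sigma_1$-recoverable over $\P\vert\gg = \bar\P\vert\gg$, and then to deploy \rprop{s-constructions prop} to compare $\bar\P$ with the corresponding $\S$-construction performed inside $\P\vert\d^\P$; this identifies $\bar\P$ (above $\gg$) with a fine-structural object strictly inside $\P\vert\d^\P$, forcing Woodinness of $\gg$ to be genuinely realized in $\P$ and giving the desired contradiction.
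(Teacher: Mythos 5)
Your setup — forming the hull, collapsing to $\bar\P$, and observing that $\d^\P$ is in the hull via the bound $T^\P_{s,k}\subseteq[((\d^\P)^{+k})^\P]^{<\omega}\times\omega$ — matches the paper, and the observation that the collapse sends $\d^\P$ to $\gg$ with critical point $\gg$ is correct. The gap is in the transfer step, and you have correctly identified where the danger lies, but the resolution you propose does not work. First, the inference that ``$\bar\P$ is $n$-suitable'' and hence $\bar\P\models``\gg$ is Woodin'' does not follow from $\Sigma_1$-elementarity of $\sigma$: $n$-suitability involves the external $S_n$-closure condition (clause 4 of Definition~\ref{$n$-suitable}), and Woodinness itself is a $\Pi_2$ scheme; neither reflects downward along a mere $\Sigma_1$-embedding. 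Second, and more decisively, the claim that ``$\bar\P$ and $\P$ agree up through $\gg^{+\omega}$'' is false: $\bar\P$ and $\P$ agree only strictly below $\gg=\cp(\sigma)$, and since $\P$ does not even think $\gg$ is a cardinal while $\bar\P$ thinks $\gg$ is its Woodin, they must diverge at $\gg$. Consequently the extenders on $\bar\P$'s sequence indexed above $\gg$ bear no a priori relation to extenders on $\P$'s sequence, and Proposition~\ref{s-constructions prop} does not apply: $\S$-constructions translate between mice over sets that are generically equivalent, which $\bar\P$ and $\P$ above $\gg$ are not.

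The paper's proof uses the theory sets $T^\P_{s_m,i}$ in a more essential way than you do. After the collapse they become sets $S_{m,i}\in\N$, and the key observation is that for each fixed $i$, the union $\bigcup_m S_{m,i}$ is a complete consistent $\Sigma_1$-theory whose term model $\R$ is the $\Sigma_1$-hull of ordinals $<(\gg^{+i})^\N$ together with $\omega$ indiscernibles; the collapse map extends to $\pi^*:\R\to\M_n(\P)$, forcing $\R$ to be well-founded and hence equal to $\M_n(\N|(\gg^{+i})^\N)$. From this one reads off that $\M_n(\N|\gg)\models``\gg$ is Woodin,'' and since $\N|\gg=\P|\gg$, this becomes a statement about $\M_n(\P|\gg)$ that contradicts the $n$-suitability of $\P$ (where $\d^\P>\gg$ is the \emph{least} cardinal $\d$ with $\M_n(\P|\d)\models``\d$ is Woodin''). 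In short: the information needed to conclude Woodinness of $\gg$ is carried by the theory sets, not by $\Sigma_1$-elementarity alone, and your proof never uses that information — you only use the $T$'s to recover the cardinals $(\d^\P)^{+k}$, which is the peripheral, not the central, role they play.
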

\begin{proof}
Suppose not. Let $\gg=\sup_{n<\omega}\gg_n^\P$. Let $X=Hull_1^\P(\gg\cap \{ T_{s_m, i}^\P : m, i\in \omega\})$. Let $\N$ be the collapse of $X$ and let $\pi:\N\rightarrow \P$ be the inverse of the collapsing map. We have that for each $m, i$ there is $S_{m, i}\in \N$ such that $\pi(S_{m, i})=T_{s_m, i}^\P$. We have that $\gg=\d^\S$. Notice that for each $i$, $\cup_{m\in \omega} S_{m, i}$ is a complete and consistent theory and if $\R$ is its model then $\R$ is essentially the hull of ordinals $<(\gg^{+i})^\N$ and $\omega$ many indiscernibles. Moreover,  we have that $\pi$ can be extended to $\pi^*:\R \rightarrow \M_n(\P)$. This implies that $\R$ is well-founded and therefore, it has to be $\M_n(\N|(\gg^{+i})^\N)$. This shows that $\M_n(\N|\gg)\models ``\gg$ is Woodin" which implies that $\P\models ``\gg$ is Woodin". This is a contradiction as $\d^\P$ is the least Woodin of $\P$.
\end{proof}

\begin{definition}[$s$-iterability]\label{s-iterability} Suppose $\P$ is $n$-suitable and $s=\la \a_i :i< l\ra$ is an increasing finite sequence of ordinals. $\P$ is \emph{$s$-iterable} if whenever $\la \T_k, \P_k : k<m\ra$ is a finite correctly guided stack on $\P$ with last model $\Q$
then there is a sequence $\la b_k: k< m\ra$ such that
\begin{enumerate}
\item for $k<m-1$,
 \begin{displaymath}
   b_k = \left\{
     \begin{array}{lr}
      \emptyset :&  \T_k \ has \ a \ successor \ length \\
      cofinal\ well-founded\\ branch\ such\ that\ \M^\T_{b_k}=\P_\k :& \T_k \ is \ maximal
     \end{array}
   \right.
\end{displaymath}

\item if $\T_{m-1}$ has a successor length then $b_{m-1}=\emptyset$, if $\T_{m-1}$ is short then $b_{m-1}$ is the unique cofinal well-founded branch such that $\Q(b_{m-1}, \T_{m-1})$ exists and is iterable, and if $\T_{m-1}$ is maximal then $b_{m-1}$ is a cofinal well-founded branch,
\item letting
 \begin{displaymath}
   \pi_k = \left\{
     \begin{array}{lr}
      i^{\T_k} :&  \T_k \ has \ a \ successor \ length \\
      i_{b_k}^{\T_k} :& \T_k \ is \ maximal
     \end{array}
   \right.
\end{displaymath}
and $\pi=\pi_{m-1}\circ \pi_{m-2}\circ\cdot \cdot\cdot \pi_0$ then for every $l$
    \begin{center}
  $\pi(T^\P_{s, l})=T^\Q_{s, l}$.
    \end{center}
\end{enumerate}
\end{definition}

Suppose $\P$ is $n$-suitable, $s=\la \a_i :i< l\ra$ is an increasing finite sequence of ordinals and $\VT=\la \T_k, \P_k : k<m\ra$ is a correctly guided finite stack on $\P$ with last model $\Q$. We say $\vec{b}=\la b_k: k< m\ra$ witness $s$-iterability for $\VT=\la \T_k, \P_k : k< m\ra$ if 2 above is satisfied. We may also say that $\vec{b}$ is an $s$-iterability branch for $\VT$. We then let
 \begin{displaymath}
   \pi_{\VT, \vec{b}, k} = \left\{
     \begin{array}{lr}
      i^{\T_k} :&  \T_k \ has \ a \ successor \ length \\
      i_{b_k}^{\T_k} :& \T_k \ is \ maximal
     \end{array}
   \right.
\end{displaymath}
and $\pi_{\VT, \vec{b}}=\pi_{\VT, \vec{b}, m-1}\circ \pi_{\VT, \vec{b}, m-2}\circ\cdot \cdot\cdot \pi_{\VT, \vec{b}, 0}$.


Suppose now that $\vec{b}$ and $\vec{c}$ are two $s$-iterability branches for $\VT$. Then using \rlem{partial agreement}, it is easy to see that  $\pi_{\VT, \vec{b}}\rest H_{s}^\P=\pi_{\VT, \vec{c}}\rest H_s^\P$. Lets record this as a lemma.

\begin{lemma}[Uniqueness of $s$-iterability embeddings] \label{uniqueness of s-iterability embeddings} Suppose $\P$ is $n$-suitable, $s$ is a finite sequence of ordinals and $\VT$ is a finite correctly guided stack on $\P$. Suppose $\vec{b}$ and $\vec{c}$ are two $s$-iterability branches for $\VT$. Then
\begin{center}
$\pi_{\VT, \vec{b}}\rest H_{s}^\P=\pi_{\VT, \vec{c}}\rest H_s^\P$.
\end{center}
Moreover, if $\VT$ consists of just one normal tree $\T$, $\Q$ is the last model of $\T$ and $b$ and $c$ witness $s$-iterability for $\T$ then if $\xi\in b$ is the least such that $\cp(E^\T_\xi)>\gg_s^\Q$ then $b\cap \xi =c\cap \xi$. \end{lemma}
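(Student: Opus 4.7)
The plan is to reduce to the case of a single normal tree by induction on the length of the stack, and then exploit the defining feature of $s$-iterability: the branch embedding must send $T^\P_{s,l}$ to $T^\Q_{s,l}$ for every $l$. For the base case, let $\VT$ consist of a single normal tree $\T$ on $\P$ with last model $\Q$. If $\T$ has successor length the branch is trivially unique, and if $\T$ is short the $s$-iterability branch is determined by the $\Q$-structure condition (there is at most one cofinal branch $b$ with $\Q(b,\T)$ iterable), so the only interesting case is when $\T$ is maximal with two witnessing branches $b$ and $c$.

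In that case $i^\T_b(T^\P_{s,l}) = T^\Q_{s,l} = i^\T_c(T^\P_{s,l})$ for every $l$. Since $\gg_s^\P$ is $\Sigma_1$-definable in $\P$ from the sequence $\la T^\P_{s,l} : l<\omega\ra$ (as the supremum of the intersection of its $\Sigma_1$-Skolem hull with $\d^\P$), elementarity of $i^\T_b$ and $i^\T_c$ together with agreement on these parameters gives
$$i^\T_b(\gg_s^\P) \;=\; \gg_s^\Q \;=\; i^\T_c(\gg_s^\P).$$
Because $\gg_s^\Q<\d^\Q=\d(\T)$ by \rlem{ggs sup up} applied in $\Q$, we are in position to apply \rlem{partial agreement} with $\alpha=\gg_s^\P$, which immediately yields $i^\T_b\rest\gg_s^\P = i^\T_c\rest\gg_s^\P$ and, as its "moreover" clause, that $b\cap\xi = c\cap\xi$ for $\xi\in b$ the least ordinal with $\cp(E^\T_\xi)>\gg_s^\Q$. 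That is exactly the second assertion of the lemma.

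To upgrade agreement on $\gg_s^\P$ to agreement on $H_s^\P$, any $x\in H_s^\P$ has the form $\tau^\P(\vec\alpha,\vec T)$ for some $\Sigma_1$ Skolem term $\tau$, ordinals $\vec\alpha<\gg_s^\P$, and finitely many parameters from $\{T^\P_{s,i}:i<\omega\}$. Applying elementarity of $i^\T_b$ and $i^\T_c$ and combining the pointwise agreement on $\gg_s^\P$ with the preservation of the theories,
$$i^\T_b(x) \;=\; \tau^\Q\!\bigl(i^\T_b(\vec\alpha),\vec T^\Q\bigr) \;=\; \tau^\Q\!\bigl(i^\T_c(\vec\alpha),\vec T^\Q\bigr) \;=\; i^\T_c(x),$$
which is the main conclusion in the single-tree case.

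For the inductive step on stacks of length $m$, apply the induction hypothesis to the length-$(m-1)$ initial segment so that the partial compositions $\pi_{\VT\rest (m-1),\vec b\rest(m-1)}$ and $\pi_{\VT\rest (m-1),\vec c\rest(m-1)}$ agree on $H_s^\P$; since $s$-iterability is preserved step by step, these partial compositions carry $H_s^\P$ into $H_s^{\P_{m-1}}$ and send $T^\P_{s,l}$ to $T^{\P_{m-1}}_{s,l}$. The base case applied to $\T_{m-1}$ then gives $\pi_{\VT,\vec b}\rest H_s^\P = \pi_{\VT,\vec c}\rest H_s^\P$, closing the induction. The only delicate point is the bookkeeping in the maximal-tree case, where the intermediate model $\P_{k+1}$ is fixed but the witnessing branch is not; however, this is precisely the situation handled by the base case via \rlem{partial agreement}, so no genuine obstacle arises.
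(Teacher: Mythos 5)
Your overall strategy matches the paper's (which essentially just says ``using \rlem{partial agreement}, it is easy to see''): reduce to a single normal tree, use \rlem{partial agreement} to get agreement on the ordinals, and then upgrade to $H_s^\P$ via $\Sigma_1$ Skolem terms. The decomposition into successor-length/short/maximal cases is also exactly right. However, the key step in the maximal case has a genuine gap.

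You assert that $\gg_s^\P$ ``is $\Sigma_1$-definable in $\P$ from the sequence $\la T^\P_{s,l} : l<\omega\ra$,'' and on this basis conclude $i^\T_b(\gg_s^\P)=\gg_s^\Q=i^\T_c(\gg_s^\P)$, which is the hypothesis you need in order to invoke \rlem{partial agreement}. But $\la T^\P_{s,l}: l<\omega\ra$ is not an element of $\P$: by definition $T^\P_{s,l}\subseteq [((\d^\P)^{+l})^\P]^{<\omega}\times\omega$, so the $T^\P_{s,l}$ have unbounded rank in $\P$ (recall $o(\P)=\sup_l((\d^\P)^{+l})^\P$). So $\gg_s^\P$ is definable \emph{over} $\P$, but not from a single parameter of $\P$, and elementarity of $i^\T_b$ does not by itself tell you where $\gg_s^\P$ goes. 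What the agreement $i^\T_b(T^\P_{s,l})=T^\Q_{s,l}=i^\T_c(T^\P_{s,l})$ buys you directly is that $i^\T_b$ and $i^\T_c$ both carry each \emph{finitely generated} hull ordinal $\gg^\P_{s,F}:=Hull_1^\P(\{T^\P_{s,i}:i\in F\})\cap\d^\P$ (for $F\subseteq\omega$ finite) to $\gg^\Q_{s,F}$; but $\gg_s^\P=\sup_F\gg^\P_{s,F}$, and an iteration map can be discontinuous at such a supremum, so $i^\T_b(\gg_s^\P)\geq\gg_s^\Q$ need not be an equality, and in particular you cannot assume $i^\T_b(\gg_s^\P)=i^\T_c(\gg_s^\P)$.

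The fix is to run exactly the argument you wrote, but at the level of the $\gg^\P_{s,F}$: for each finite $F$, $\gg^\P_{s,F}$ really is $\Sigma_1$-definable from the finite parameter $\{T^\P_{s,i}:i\in F\}$, so $i^\T_b(\gg^\P_{s,F})=\gg^\Q_{s,F}=i^\T_c(\gg^\P_{s,F})<\d(\T)$, and \rlem{partial agreement} gives $i^\T_b\rest\gg^\P_{s,F}=i^\T_c\rest\gg^\P_{s,F}$ together with $b\cap\xi_F=c\cap\xi_F$, where $\xi_F\in b$ is least with $\cp(E^\T_{\xi_F})>\gg^\Q_{s,F}$. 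Taking the union over $F$ yields $i^\T_b\rest\gg_s^\P=i^\T_c\rest\gg_s^\P$; and since $\gg_s^\Q=\sup_F\gg^\Q_{s,F}$, the ordinal $\xi$ of the lemma is $\sup_F\xi_F$, giving $b\cap\xi=c\cap\xi$. From there your Skolem-term argument for the upgrade to $H_s^\P$ (which only uses pointwise agreement below $\gg_s^\P$ plus agreement on the $T^\P_{s,i}$) and your induction on the length of the stack both go through.
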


If $\P$ is $s$-iterable and $\T$ is a normal correctly guided tree then we let $b^\T_s=\cap \{b : b$ witnesses the $s$-iterability of $\P$ for $\T\}$. Here is how $s$-iterability is connected to iterability. Suppose $\P$ is $n$-suitable. We say $\P$ has a \textit{correct $\omega_1$-iteration strategy} if it has an $\omega_1$-iteration strategy $\Sigma$ such that whenever $\T$ is a correctly guided tree of limit length and $b=\Sigma(\T)$ then $\T^\frown \M^\T_b$ is correctly guided.

\begin{lemma}
Suppose $\P$ is $n$-suitable and for every $m$, $\P$ is $s_m$-iterable. Then $\P$ has a correct iteration strategy.
\end{lemma}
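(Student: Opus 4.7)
The plan is to construct a correct $\omega_1$-iteration strategy $\Sigma$ by specifying $\Sigma(\T)$ recursively on each correctly guided normal tree $\T$ of limit length on $\P$. If $\T$ is short, I would set $\Sigma(\T)$ to be the unique cofinal well-founded branch $b$ with $\Q(b,\T) \trianglelefteq \M_n(\M(\T))$; existence is by the definition of shortness, and uniqueness follows from \rlem{uniqueness of branches} applied to cofinal generators of the $\Q$-structure.

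The interesting case is when $\T$ is maximal. I would put $\Q := \M_n(\M(\T))\vert(\d(\T)^{+\omega})^{\M_n(\M(\T))}$, the premouse that would continue any correctly guided stack past $\T$, and for each $m$ let $B_m$ denote the nonempty set of $s_m$-iterability branches produced by applying $s_m$-iterability of $\P$ to the one-tree stack $\la\T,\P\ra$. Since $s_m$ is a prefix of $s_{m+1}$ and $T^\P_{s_m,l}$ is uniformly $\Sigma_1$-definable from $T^\P_{s_{m+1},l}$, one has $B_{m+1}\subseteq B_m$. For any $b_m\in B_m$ let $\xi_m$ be the least ordinal in $b_m$ with $\cp(E^\T_{\xi_m})>\gg^\Q_{s_m}$. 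By the moreover clause of \rlem{uniqueness of s-iterability embeddings}, the initial segment $\eta_m:=b_m\cap\xi_m$ is independent of the choice of $b_m\in B_m$; and for $m<m'$, applying the same clause to $b_m,b_{m'}\in B_m$ together with the monotonicity $\gg^\Q_{s_m}\leq\gg^\Q_{s_{m'}}$ yields $\eta_m\subseteq\eta_{m'}$. I would then set $\Sigma(\T):=b:=\bigcup_m\eta_m$.

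Cofinality of $b$ follows from \rlem{ggs sup up} inside $\Q$ (giving $\sup_m\gg^\Q_{s_m}=\d(\T)$) together with the cofinality of critical points along any cofinal branch of a normal tree, which forces the $\xi_m$ to be cofinal in $\lh(\T)$. The hard part will be well-foundedness of $\M^\T_b$, since $b$ need not itself lie in any $B_m$. My plan here is to exploit the agreement $b\cap\xi_m=b_m\cap\xi_m$ to show $i^\T_b\rest H^\P_{s_m}=i^\T_{b_m}\rest H^\P_{s_m}$; the image of this hull is well-founded inside the well-founded model $\M^\T_{b_m}$, and \rlem{ggs sup up} applied in $\P$ yields $\sup_m\gg^\P_{s_m}=\d^\P$, so the hulls $H^\P_{s_m}$ exhaust $\P$ cofinally below $\d^\P$, forcing the embedding $i^\T_b:\P\to\M^\T_b$ to have well-founded range and hence $\M^\T_b$ to be well-founded. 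Correctness of $\Sigma$ then propagates inductively: for maximal $\T$ the $\Q$-structure condition at the new branch is vacuous since $\d(\T)$ is Woodin in $\M_n(\M(\T))$, and for short $\T$ it is built into the choice of branch.
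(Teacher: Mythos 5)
Your overall decomposition is the same as the paper's: for short trees you take the unique branch with the right $\Q$-structure, and for a maximal tree $\T$ you take $b=\bigcup_m(b_m\cap\xi_m)$ where the $b_m$ are $s_m$-iterability branches, using the ``moreover'' clause of \rlem{uniqueness of s-iterability embeddings} and the nesting $B_{m+1}\subseteq B_m$ to see the union is a well-defined, cofinal branch. That part is sound and essentially matches the paper (the paper writes $b^\T_{s_m}=\bigcap B_m$ and asserts $b^\T_{s_m}\subseteq b^\T_{s_{m+1}}$; your $\eta_m$ is the same object, by the uniqueness lemma).

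The gap is in the well-foundedness step. You claim that the branch agreement $b\cap\xi_m=b_m\cap\xi_m$ yields $i^\T_b\rest H^\P_{s_m}=i^\T_{b_m}\rest H^\P_{s_m}$. This is false in general. Branch agreement up to $\xi_m$ gives agreement of the iteration maps only on ordinals that the later extenders do not move, i.e.\ on $\gg^\P_{s_m}$ (since $\cp(E^\T_\beta)>\gg^\Q_{s_m}$ for $\beta\geq\xi_m$). But $H^\P_{s_m}$ contains the parameters $T^\P_{s_m,i}$ and much of $\P$ above $\d^\P$; those elements are moved differently by $i^\T_b$ and $i^\T_{b_m}$ once $b$ and $b_m$ diverge past $\xi_m$, because the subsequent ultrapower maps do not fix ordinals above their critical points. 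So the images $i^\T_b\text{``}H^\P_{s_m}$ and $i^\T_{b_m}\text{``}H^\P_{s_m}$ live in different models, and the latter being inside the well-founded $\M^\T_{b_m}$ says nothing directly about the former. The paper's argument is deliberately weaker where yours overreaches: it asserts only $i^\T_b\rest\gg^\P_{s_m}=\pi_{\T,b_m}\rest\gg^\P_{s_m}$, and then constructs a $\Sigma_1$-elementary map $\pi_m:i^\T_b\text{``}H^\P_{s_m}\to H^\Q_{s_m}$ abstractly (not as a restriction of the identity), using that both sides are $\Sigma_1$-hulls generated by compatible generators (ordinals below $\gg^\Q_{s_m}$ on which the two embeddings agree, together with the images of the $T^\P_{s_m,i}$, which on the right-hand side are the $T^\Q_{s_m,i}$ by $s_m$-iterability). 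Pasting the $\pi_m$ together gives an order-preserving map from $\M^\T_b$ into $\Q$, which is what yields well-foundedness (and in fact $\M^\T_b=\Q$). If you replace your claimed identity of restrictions by this abstract embedding argument, the rest of your plan goes through.
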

\begin{proof}
Let $\T$ be a correctly guided tree. If $\T$ is short then using $s_m$-iterability there must be a branch $b$ of $\T$ such that $\Q(b, \T)$-exists and is iterable. In this case we define $\Sigma(\T)=b$. Suppose now $\T$ is maximal with last model $\Q$. Then for each $m$, let $b_m=b^\T_{s_m}$. Notice that $b_m\subseteq b_{m+1}$. Also, because $\sup_{m\in \omega}\gg_m^\Q=\d^\Q$, we have that if $b=\cup_{m\in \omega} b_m$ then $b$ is a cofinal branch. We claim that $\M^\T_b=\Q$. Let $\R=\M^\T_b$. For all we know $\R$ may not be well-founded. But notice that if $R_m=i^\T_b(H_m^\P)$ then there is $\pi_m: \R_m\rightarrow_{\Sigma_1} H_m^\Q$. This is because $i_b^\T\rest \gg_m^\P=\pi_{\T, b_m}\rest \gg^\P_m$ where $b_m$ is any cofinal well-founded branch witnessing $s$-iterability of $\P$ for $\T$. It then follows that if $\pi=\cup_{m\in \omega} \pi_m$ then $\pi:\cup_{m\in \omega} \R_m \rightarrow \Q$ and because $\cup_{m\in \omega}\R_m=\R$, we have that $\R$ is well-founded. Because for each $i$ and $m$, $T^\Q_{s_m, i}\in ran(\pi)$, using the proof of \rlem{ggs sup up}, we get that $\R$ is $n$-suitable and hence, $\R=\Q$ and $\pi=id$. In this case, then, we define $\Sigma(\T)=b$. It follows from our construction that $\Sigma$ is a correct iteration strategy.
\end{proof}

Notice that, if $\P$ is $s$-iterable, $\VT$ is a correctly guided finite stack on $\P$, and $\vec{b}$ witnesses $s$-iterability of $\P$ for $\VT$, then even though $\pi_{\VT, \vec{b}}\rest H_{s}^\P$ is independent of $\vec{b}$ it may very well depend on $\VT$. This observation motivates the following definition.

\begin{definition}[Strong $s$-iterability]\label{strong s-itearbility} Suppose $\P$ is $n$-suitable and $s$ is a finite sequence of ordinals. Then $\P$ is strongly $s$-iterable if $\P$ is $s$-iterable and whenever $\VT=\la \T_j , \P_j : j< u\ra$ and $\VU=\la \U_j, \Q_j : j < v\ra$ are two correctly guided finite stacks on $\P$ with common last model $\Q$, $\vec{b}$ witnesses $s$-iterability for $\VT$ and $\vec{c}$ witnesses $s$-iterability for $\VU$ then
\begin{center}
$\pi_{\VT, \vec{b}}\rest H_s^\P = \pi_{\VU, \vec{c}}\rest H_s^\P$.
\end{center}
\end{definition}

Are there $s$-iterable $\P$'s? Of course there must be, as otherwise we wouldn't define them, and here is an argument that shows it. Suppose not. Let $s=\la \a_k : k< l\ra$. Using the fact that there are no $s$-iterable $\P$'s, we can then get $\vec{B}=\la B_k : k<\omega\ra$ such that $B_k=\la \T^k_j, \P^k_j , \Q_k: j< m_k\ra$ and
\begin{enumerate}
\item $\P^0_0=\W_n$ and $\P^{k+1}_0=\Q_k$,
\item for every $k$, $\la \T^k_j, \P^k_j : j< m_k\ra$ is a correctly guided finite stack on $\P^k_0$ with last model $\Q_k$,
\item whenever $\la b^k_j: k<\omega \wedge j<k< m_k \ra$ is such that
\begin{enumerate}
\item for $j<m_k-1$,
 \begin{displaymath}
   b^k_j = \left\{
     \begin{array}{lr}
      \emptyset :&  \T^k_j \ has \ a \ successor \ length \\
      cofinal\ well-founded\ branch\\ such\ that\ \M^\T_{b^k_j}=\P^k_j :& \T^k_j \ is \ maximal
     \end{array}
   \right.
\end{displaymath}
\item if $\T^k_{m_k-1}$ has a successor length then $b^k_{m_k-1}=\emptyset$, if $\T^k_{m_k-1}$ is short then $b^k_{m_k-1}$ is the unique cofinal well-founded branch such that $\Q(b^k_{m_k-1}, \T^k_{m_k-1})$ exists and is iterable, and if $\T^k_{m_k-1}$ is maximal then $b^k_{m_k-1}$ is a cofinal well-founded branch,\footnote{Notice that $s$-iterability cannot fail because we cannot find correct branches for short trees as long as we start with $\P_0=\W_n$.}
\end{enumerate}
then letting $\vec{b}_k=\la b^k_j: j< m_k\ra$, for some $m$ and every $k$,
\begin{center}
$\pi_{\VT_k, \vec{b}_k}(T^{\P^k_0}_{s, m})\not = T^{\Q_k}_{s, m}$.
\end{center}

\end{enumerate}
Let then $\la b^k_j: k<\omega \wedge j<m_k \ra$ be the sequence of branches given by $\Sigma_{\W_n}$. Then clearly for every $k$, $\pi_{\VT_k, \vec{b}_k}$'s extend to
\begin{center}
     $\pi_k:\M_n(\P^k_0)\rightarrow \M_n(\Q_k)$.
    \end{center}
Let $\b>\max(s)$ be a uniform indiscernible and let $t=s^\frown \la\b\ra$. Suppose that for some $k$, $\pi_k(t)=t$. Notice that for every $m$, $T^{\P^k_0}_{s, m}$ can be defined by
\begin{center}
$(t, \phi)\in T^{\P^k_0}_{s, m} \iff \phi$ is $\Sigma_1$ and $\M_n(\P^k_0)|\b\models \phi[t, s]$.
\end{center}
Hence, because we are assuming $\pi_k(t)=t$, we get that $\pi_{\VT_k, \vec{b}_k}(T^{\P^k_0}_{s, m}) = T^{\Q_k}_{s, m}$.

Therefore, we must have that $t<_{lex} \pi_k(t)$. Let $\Q$ be the direct limit of $\la \M_n(\Q_k): k< \omega \ra$ under the maps $\sigma_{k, l}= \pi_l\circ \pi_{l-1}\circ\cdot\cdot\cdot \pi_k$ and let $\pi^{*}_k: \M_n(\Q_k) \rightarrow \Q$ be the embedding given by the direct limit construction. Now if $t_k=\pi^{*}_k(t)$, then $\la t_k : k<\omega\ra$ is a $\leq_{lex}$-decreasing sequence of finite sequences of ordinals. Because $\pi^*_k$'s are iteration embeddings according to $\Sigma_{\W_n}$, we get a contradiction. This completes the proof that for every $s$ there is an $s$-iterable $n$-suitable $\P$. 

\begin{lemma}\label{s-iterability lemma}
For every $s\in Ord^{<\omega}$ and $n\in \omega$ there is an $s$-iterable $n$-suitable $\P$. Moreover, for any $n$-suitable $\Q$ there is a normal correctly guided tree $\T$ with last model $\P$ such that $\P$ is $s$-iterable.
\end{lemma}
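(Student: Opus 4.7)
The first assertion has essentially been established in the discussion immediately preceding the lemma: under the contrary assumption one iteratively builds correctly guided finite stacks $B_k$ beginning with $\P^0_0=\W_n$, picks branches via $\Sigma_{\W_n}$ for the maximal trees, forms the direct limit of the $\M_n(\P^k_0)$'s under the composed iteration maps, and extracts a $<_{lex}$-decreasing $\omega$-sequence of finite ordinal tuples in the limit, contradicting wellfoundedness. So my plan for the first part is simply to invoke that argument.

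For the moreover clause, my plan is to reduce to the first assertion by comparison. Let $\P^*$ be an $s$-iterable $n$-suitable premouse as supplied by the first part. I will run a correctly guided coiteration of $\Q$ and $\P^*$. On the $\P^*$-side, at limit stages I pick branches via the $\Q$-structure in $\M_n(\M(\T))$ when the tree is short, and via an $s$-iterability branch granted by $\P^*$'s $s$-iterability when the tree is maximal. On the $\Q$-side, branches are chosen by the canonical correct iteration strategy for $\Q$ in $L(\mathbb{R})$, whose existence is guaranteed by the $\Q$-structure analysis together with the $\omega_1$-iterability of $\M_n^\#$. Since both premice are $n$-suitable with projective-style $\Q$-structures, the coiteration terminates at a countable stage with normal correctly guided trees $\T$ on $\Q$ and $\U$ on $\P^*$ having a common last model $\R$.

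To finish, I need to verify that $\R$ is $s$-iterable. By \rlem{uniqueness of s-iterability embeddings}, any $s$-iterability branches $\vec{b}$ for $\U$ induce a single well-defined map $\pi_{\U,\vec{b}}\rest H_s^{\P^*}\colon H_s^{\P^*}\to H_s^{\R}$ sending each $T^{\P^*}_{s,l}$ to $T^{\R}_{s,l}$. Given any further correctly guided finite stack $\VS$ on $\R$, the concatenation $\U{}^\frown\VS$ is a correctly guided finite stack on $\P^*$, and $s$-iterability of $\P^*$ supplies $s$-iterability branches for this concatenation; their tail witnesses $s$-iterability of $\R$ for $\VS$. Hence $\R$ is $s$-iterable, and $\T$ is the required normal correctly guided tree on $\Q$.

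The main obstacle I anticipate is justifying the $\Q$-side of the coiteration, since $\Q$ is only assumed $n$-suitable rather than endowed a priori with a correct iteration strategy. This is handled by the standard $\Q$-structure argument: short trees are guided canonically by the $\M_n$-operator applied to $\M(\T)$, whose iterability descends from the $\omega_1$-iterability of $\M_n^\#$, while the maximal case is absorbed by the simultaneous structure of the comparison (the $\P^*$-side picks an $s$-iterability branch, and matching against it singles out the $\Q$-side branch that preserves normality and correctness). Once this branch-selection is secured, comparison terminates in the usual way, and the rest of the argument is as outlined.
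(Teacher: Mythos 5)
Your proposal is correct and follows the paper's own approach: the first assertion is exactly the content of the discussion preceding the lemma, and the ``moreover'' clause is obtained by comparing $\Q$ with an already-established $s$-iterable $\P^*$. The paper disposes of the comparison step in a single sentence (``Comparing $\P$ and $\Q$ produces our desired $\T$''); your fleshing out of why the common iterate $\R$ is itself $s$-iterable (concatenating $\U$ with a stack on $\R$ and invoking $s$-iterability of $\P^*$ together with \rlem{uniqueness of s-iterability embeddings}) is a reasonable and essentially standard way to fill in what the paper leaves implicit.
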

\begin{proof}
We have already shown that there is an $s$-iterable $n$-suitable $\P$. It is then the second clause that needs a proof. Fix a $n$-suitable $\Q$ and let $\P$ be $s$-iterable. Comparing $\P$ and $\Q$ produces our desired $\T$.
\end{proof}

Is there a strongly $s$-iterable $\P$? The proof we have just given shows that there is. Indeed, using the proof given above we have $\P$ which is $s$-iterable and is a $\Sigma_{\W_n}$-iterate of $\W_n$. Moreover, if $\Lambda=\Sigma_\P$ then the branches witnessing $s$-iterability can be taken to be those given by $\Lambda$. It then easily follows from the Dodd-Jensen property of $\Lambda$ that $\P$ is strongly $s$-iterable. 

\begin{lemma}[Strongly $s$-iterability lemma]\label{strongly s-iterability lemma}
For every $s$ there is a strongly $s$-iterable $\P$. Moreover, for any $n$-suitable $\Q$ there is normal correctly guided stack $\T$ with last model $\P$ such that $\P$ is strongly $s$-iterable.
\end{lemma}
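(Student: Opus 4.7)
My plan is to refine the construction of \rlem{s-iterability lemma}, as the paragraph preceding the statement already hints, so that the $s$-iterable $\P$ obtained is itself a $\Sigma_{\W_n}$-iterate of $\W_n$, and then to upgrade $s$-iterability to strong $s$-iterability by invoking the Dodd--Jensen property of the induced tail strategy $\Lambda:=\Sigma_\P$. Concretely, I would first note that the construction in \rlem{s-iterability lemma} actually delivers some $\P\in I(\W_n,\Sigma_{\W_n})$: given any $s$-iterable $n$-suitable $\P_0$ produced by that lemma, a successor-length coiteration of $\P_0$ against $\W_n$ (whose output side on $\W_n$ is run according to $\Sigma_{\W_n}$) yields such a $\P$, and $s$-iterability is preserved along the comparison tree because the uniqueness of $s$-iterability embeddings lets us push the hull equations through iteration maps. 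Since $\Lambda$ is the tail of $\Sigma_{\W_n}$, it inherits the Dodd--Jensen property.

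Second, I would check that the branches selected by $\Lambda$ themselves always witness $s$-iterability for correctly guided finite stacks on $\P$. Fixing any $\vec{b}^*$ which witnesses $s$-iterability for a correctly guided stack $\VT$ on $\P$ (such $\vec{b}^*$ exists by the first step), \rlem{uniqueness of s-iterability embeddings} ensures that $\pi_{\VT,\vec{b}^*}\rest H_s^\P$ coincides with the analogous restriction of the composite map produced by the $\Lambda$-branches, so the defining condition $\pi(T^\P_{s,l})=T^\Q_{s,l}$ transfers automatically to the $\Lambda$-branches. In particular the $\Lambda$-branches are always valid $s$-iterability witnesses.

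With these in hand, the strong $s$-iterability of $\P$ reduces to the Dodd--Jensen uniqueness of iteration embeddings. Given two correctly guided finite stacks $\VT$ and $\VU$ on $\P$ with common last model $\Q$ and witnesses $\vec{b},\vec{c}$, the previous step together with \rlem{uniqueness of s-iterability embeddings} lets me replace $\vec{b}$ and $\vec{c}$ by the $\Lambda$-branches without disturbing $\pi_{\VT,\vec{b}}\rest H_s^\P$ or $\pi_{\VU,\vec{c}}\rest H_s^\P$. Once this substitution is made, the two composite maps $\P\to\Q$ are both produced by the same Dodd--Jensen strategy $\Lambda$, and hence agree by the standard fact recorded in Section~2 ($i^{\VT}=i^{\VU}$ whenever $\VT$ and $\VU$ are $\Lambda$-stacks with common last model along which no drop occurs). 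For the moreover clause, given any $n$-suitable $\Q$ I would coiterate $\Q$ against a fixed strongly $s$-iterable $\P^*$; the comparison gives a normal correctly guided tree on $\Q$ whose last model $\P$ is a $\Sigma_{\W_n}$-iterate through $\P^*$, and the same argument shows $\P$ is strongly $s$-iterable.

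The step I expect to be the main obstacle is the application of Dodd--Jensen across the mixed successor/short/maximal types of trees appearing inside a stack: when a component $\T_k$ is maximal, the branch map $i^{\T_k}_{b_k}$ is only a partial embedding in the usual sense (it goes from the portion of $\P_k$ whose ordinals are moved into $H^{\P_{k+1}}_s$ to that hull), so I must verify that the composite $\pi_{\VT,\vec{b}}$ built from $\Lambda$-branches really is an honest $\Lambda$-stack embedding to which the Dodd--Jensen uniqueness clause applies, and that the identification of the two composites on $H_s^\P$ is not spoiled by drops or by the partial character of the maximal-stage maps.
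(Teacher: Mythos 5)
Your proposal matches the paper's argument: the paper likewise observes that the $s$-iterability construction already yields a $\P\in I(\W_n,\Sigma_{\W_n})$, uses the branches given by $\Lambda=\Sigma_\P$ as $s$-iterability witnesses, invokes the Dodd--Jensen property of $\Lambda$ for strong $s$-iterability, and handles the moreover clause by comparing $\P$ with a given $n$-suitable $\Q$. The technical caveat you flag about maximal-stage maps is real but is also left implicit in the paper.
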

\begin{proof}
We have already shown that there is a strongly $s$-iterable $\P$. It is then the second clause that needs a proof. Fix a $n$-suitable $\Q$ and let $\P$ be a strongly $s$-iterable. Comparing $\P$ and $\Q$ produces our desired $\T$.
\end{proof}
If $\P$ is strongly $s$-iterable and $\VT$ is a correctly guided finite stack on $\P$ with last model $\Q$ then we let
\begin{center}
$\pi_{\P, \Q, s}:H_s^\P\rightarrow H_s^\Q$
\end{center}
be the embedding given by any $\vec{b}$ which witnesses the $s$-iterability of $\VT$, i.e., fixing $\vec{b}$ which witnesses $s$-iterability for $\VT$,
\begin{center}
$\pi_{\P, \Q, s} =\pi_{\VT, \vec{b}}\rest H_s^\P$.
\end{center}
Clearly, $\pi_{\P, \Q, s}$ is independent of $\VT$ and $\vec{b}$.

Notice that $\W_n$ is strongly $s_m$-iterable for every $m$. Moreover, if $\VT$ is any correctly guided stack on $\W_n$ with last model $\Q$ then $\pi_{\W_n, \Q, s_m}$ agrees with the correct iteration embedding, i.e., if $i:\W_n\rightarrow \Q$ is the iteration embedding according to the canonical iteration strategy of $\W_n$ then
\begin{center}
$\pi_{\W_n, \Q, s_m}=i\rest H_m^{\W_n}$.
\end{center}
Moreover, since $\cup_{m<\omega}H_m^{\W_n}=\W_n$, we get that
\begin{center}
$\cup_{m<\omega}\pi_{\W_n, \Q, s_m} = i$.
\end{center}
This is how we will approximate $\Sigma$ inside $\M_n(x)$.

Next let
\begin{center}
$\mathcal{F}^+_n=\{ \P: \P\in I(\W_n, \Sigma_{\W_n})$ as witnessed by some finite stack $\}$.
\end{center}
We let $\leq^+_n$ be a prewellording of $\mathcal{F}^+_n$ given by $\P\leq^+_n\Q$ iff $\Q\in I(\P, \Sigma_\P)$ as witnessed by a finite stack. We then let $\M_{\infty, n}^+$  be the direct limit of $(\mathcal{F}^+_n, \leq^+_n)$ under the iteration maps $i_{\P, \Q}$. Notice that $\card{R^+_n}=\d^{\M_\infty^+}$. We let $\d^+_{ \infty, n}=\d^{\M_{\infty, n}^+}$.

We also let
\begin{center}
$\mathcal{I}_n=\{ (\P, s): \P$ is $n$-suitable, $s\subseteq Ord^{<\omega}$ and $\P$ is strongly $s$-iterable $\}$.
\end{center}
and
\begin{center}
$\mathcal{F}_n=\{ H^\P_s: (\P, s)\in \mathcal{I}_n\}$.
\end{center}
We define $\leq_n$ on $\mathcal{I}_n$ by: $(\P, s)\leq_n (\Q, t)$ iff $\Q$ is a correctly guided iterate of $\P$ and $s\subseteq t$. Is $\leq_n$ directed? The answer is of course yes and to see that fix $(\P, s), (\Q, t)\in \mathcal{I}_n$. Then we have $\R$ which is strongly $s\cup t$-iterable. Let $\S$ be the result of comparing $\P, \Q$ and $\R$. Then $(\S, s\cup t)\in \mathcal{I}_n$ and
\begin{center}
$(\P, s)\leq_n(\S, s\cup t)$ and $(\Q, t)\leq_n (\S, s\cup t)$.
\end{center}
We can then form the direct limit of $(\mathcal{F}_n, \leq_n)$ under the maps $\pi_{\P, \Q, s}$. We let $\M_{\infty, n}$ be this direct limit. It is clear that $\M_{\infty, n}^+$ is well-founded. However, it is not at all clear that $\M_{\infty, n}$ is well-founded. We show that not only $\M_{\infty, n}$ is well-founded but that it is also the same as $\M_{\infty, n}^+$.

Before we continue, we fix some notation. If $\P\in I(\W_n, \Sigma_{\W_n})$, then we let $i_{\P, \infty}:\P\rightarrow \M_{\infty, n}^+$ be the iteration map. For $(\P, s)\in \mathcal{I}_n$, we let $\pi_{\P, \infty, s}$ be the direct limit embedding acting on $H_s^\P$.

\begin{lemma}\label{wellfoundness} $\M_{\infty, n}=\M_{\infty, n}^+$.
\end{lemma}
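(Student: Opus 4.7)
The plan is to construct an isomorphism $\sigma:\M_{\infty, n}\to \M_{\infty, n}^+$ induced by the inclusion $\mathcal{F}_n^+\subseteq \mathcal{I}_n$ (sending $\P\in \mathcal{F}_n^+$ to $(\P,s)$ for varying $s$). The first preparatory observation is that every $\P\in \mathcal{F}_n^+$ is strongly $s$-iterable for every $s$: the branches given by $\Sigma_\P$ witness $s$-iterability (each such branch leads to an iterate which agrees with a $\Sigma_{\W_n}$-iterate, so the $T^\P_{s,i}$'s are preserved), and the Dodd-Jensen property of $\Sigma_\P$ upgrades this to strong $s$-iterability and moreover guarantees that $\pi_{\P, \Q, s}$ agrees with $i_{\P,\Q}\rest H_s^\P$ whenever $\Q$ is a $\Sigma_\P$-iterate of $\P$. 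Thus the maps in the $\mathcal{F}_n^+$-system are just the restrictions of the direct limit maps for the $\mathcal{F}_n$-system, and one gets a canonical commutative square.

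Next, I would prove the cofinality statement: for every $(\P,s)\in \mathcal{I}_n$ there is $\Q\in \mathcal{F}_n^+$ with $(\P,s)\leq_n (\Q,s)$. To see this, compare $\P$ against $\W_n$, using $\Sigma_{\W_n}$ on the $\W_n$-side and, on the $\P$-side, the $Q$-structure branches where the tree is short and the (unique) $s$-iterability branch where the tree is maximal; this is well-defined by \rlem{uniqueness of s-iterability embeddings} and by correct guidance. Standard comparison arguments give termination, and the common last model $\Q$ is then a $\Sigma_{\W_n}$-iterate of $\W_n$ (hence in $\mathcal{F}_n^+$) as well as a correctly guided iterate of $\P$, so $(\P,s)\leq_n(\Q,s)$.

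With these in hand, define $\sigma(\pi_{\P,\infty,s}(x)) = i_{\Q,\infty}(\pi_{\P,\Q,s}(x))$ for any $\Q$ as above. Well-definedness and independence of the chosen $\Q$ follow from strong $s$-iterability together with the preparatory observation (any two such $\Q$'s can be further comparison-iterated to a common model where the two definitions coincide). Injectivity is automatic from the direct limit construction, and $\sigma$ is an elementary embedding by the elementarity of the $\pi_{\P,\Q,s}$ and the $i_{\Q,\infty}$. Surjectivity reduces to showing that for $\P\in \mathcal{F}_n^+$ the union $\cup_s\, i_{\P,\infty}[H_s^\P]$ exhausts $i_{\P,\infty}[\P]$; this uses \rlem{ggs sup up} together with the fact that $T^\P_{s_m,i}$ encodes the $\Sigma_1$ theory of $\M_n(\P)$ with parameters below $(\d^\P)^{+i}$ and thus $\cup_{m,i}H_{s_m}^\P$ recovers all of $\P$ up to $o(\P)=\sup_i(\d^{+i})^\P$.

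The main technical obstacle will be the second step, namely justifying that the comparison of $\P$ (only $s$-iterable) against $\W_n$ (fully iterable) terminates and produces an element of $\mathcal{F}_n^+$: one must verify that $s$-iterability is enough to pick branches at every limit stage encountered in the comparison, not merely at the top. The key ingredients are \rlem{uniqueness of branches} (so short-tree branches are determined by correct $\Q$-structures) and the observation that any maximal tree arising in the comparison has its length below the first Woodin of $\P$, so the stack is finite and $s$-iterability applies directly. Once this is settled, wellfoundedness of $\M_{\infty, n}$ is a free consequence of its isomorphism with $\M_{\infty, n}^+$, whose wellfoundedness is immediate from iterability of $\M_{n+1}$.
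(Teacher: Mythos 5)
Your proposal is correct and follows essentially the same route as the paper: both define a map from $\M_{\infty,n}$ into $\M_{\infty,n}^+$ using (i) the cofinality of strongly $s$-iterable $\Sigma_{\W_n}$-iterates of $\W_n$ in $\mathcal{I}_n$, (ii) the agreement of $\pi_{\P,\Q,s}$ with the genuine iteration embedding when $\P$ and $\Q$ are honest iterates (via Dodd--Jensen), and (iii) the fact that $\bigcup_m H_{s_m}^\P$ recovers all of $\P$, which is how the paper finishes the argument that the map is the identity. The cofinality step that you single out as the ``main technical obstacle'' need not be re-derived: it is precisely the content of \rlem{strongly s-iterability lemma} (already proved in the text), and your proposed route to it is a bit muddled --- in a normal comparison there is one tree per side, so the remark that ``the stack is finite'' because maximal trees have bounded length does not address termination; moreover, the $s$-iterability branch for a maximal tree is not unique (only the induced map on $H_s^\P$ is, by \rlem{uniqueness of s-iterability embeddings}), and for a finite stack the successor model after a maximal tree is the canonical $\M_n(\M(\T))$-model, not a branch model --- but none of this causes a gap once you simply cite \rlem{strongly s-iterability lemma}.
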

\begin{proof}
To show the equality, we  define a map $\pi:\M_{\infty, n} \rightarrow_{\Sigma_1} \M_{\infty, n}^+$ and show that $\pi$ is the identity. Let $x\in \M_{\infty, n}$. Let $(\P, s_m)\in \mathcal{I}_n$ be such that for some $y\in H_m^\P$, $\pi_{\P, \infty, s_m}(y)=x$ and $\P$ is a normal correct iterate of $\W_n$. Then we let
\begin{center}
$\pi(x)=i_{\P, \infty}(x)$.
\end{center}

First we need to see that $\pi$ is independent of the choice of $\P$. Let then $(\P, s_p)\in \mathcal{I}_n$ and $(\R, s_q)\in \mathcal{I}_n$ be such that there are $y\in H_p^\P$ and $z\in H_q^\R$ such that $\pi_{\P, \infty, s_p}(y)=\pi_{\R, \infty, s_q}(z)=x$ and both $\P$ and $\R$ are normal iterates of $\W_n$. Let $\Q$ be the outcome of comparing $\P$ and $\R$. Notice that we must have that
\begin{center}
$\pi_{\P, \Q, s_p}(y)=\pi_{\R, \Q, s_q}(z)$.
\end{center}
It then follows that
\begin{center}
$i_{\Q, \infty}(\pi_{\P, \Q, s_p}(y))=i_{\Q, \infty}(\pi_{\R, \Q, s_q}(z))$.
\end{center}
and hence, $\pi$ is independent of the choice of $\P$. A similar argument shows that $\pi$ is a $\Sigma_1$-elementary and this much is enough to conclude that $\M_{\infty, n}$ is well-founded. But we can in fact show that $\pi=id$. For this, fix $x\in \M_{\infty, n}^+|\d^{\M_{\infty, n}^+}$. Let $Q$ be such that there is $y\in \Q$ such that $x=i_{\Q, \infty}(y)$. Let $s_m$ be such that $y\in H_{m}^{\Q}$. Then if $z=\pi_{\Q,\infty, s_m}(y)$ then $\pi(z)=x$. This shows that $\pi\rest \d^{\M_{\infty, n}}+1=id$.

Now fix $\P$ and let $T^\infty_{m, l}=i_{\P, \infty}(T^\P_{m, l})$. We clearly have that $T^\infty_{m, l}\in ran(\pi)$.
Let then $S_{m, l}\in \M_{\infty, n}$ be such that $\pi(S_{m,l})=\T_{m,l}^\infty$. Now, let $\N=\M_{\infty, n}$. Then for each $l$, $\cup_{m<\omega}S_{m, l}$ is a prescription for constructing a model with $n$ Woodin cardinals over $\N|(\d^{+l})^\N$. Moreover, if $K$ is this model then $K$ is the $\Sigma_1$-hull of ordinals $<(\d^{+l})^\N$ and $\omega$ indiscernibles. Because of $\pi$, it follows that $K=\M_n^\#(\N|(\d^{+l})^\N)$. This then inductively implies that for every $l$, $\N|(\d^{+l})^\N=\S|(\d^{+l})^\S$ where $\S=\M_{\infty, n}^+$. Hence, $\pi$ has to be the identity.
\end{proof}

Before moving on, notice that everything we have done in this section relativizes to arbitrary real $x$. For any real $x$, we can define $\mathcal{J}_{x, n}^+$, $\mathcal{I}_{x, n}$, $\mathcal{F}^+_{x, n}$, $\mathcal{F}_{x, n}$, $\leq_{x, n}^+$, $\leq_{x, n}$, $\M_{\infty, x, n}^+$,  and $\M_{\infty, x, n}$. We will then again have that $\M_{\infty, x, n}^+=\M_{\infty, x, n}$ and $\d^{\M_{\infty, x, n}}<\d^1_{n+3}$. We let $\d_{\infty, x, n}=\d^{\M_{\infty, x, n}}$ and also for $s\in Ord^{<\omega}$, we let
\begin{center}
$\gg_{\infty, s, x, n}=\sup(\pi_{\P, \infty, s}"\gg^\P_s)$
\end{center}
where $(\P, s)\in \mathcal{I}_{x, n}$. Clearly $\gg_{\infty, s, x, n}$ is independent of the choice of $\P$.  We also let
\begin{center}
$\mathcal{J}_{n, s, z}=\{ (\P, \a) : (\P, s)\in \mathcal{I}_{n, z} \wedge \a<\gg_s^\P\}$.
\end{center}
We let $R_{n, s, z}$ be a prewellordering of $\mathcal{J}_{n, s, z}$ given by $(\P, \a)R_{n, s, z}(\Q, \b)$ if $\Q$ is a correct iterate of $\P$ and $\pi_{\P, \Q, s}(\a)\leq \b$. We also let $\W_z=\M_{n+1}(z)|(\d^{+\omega})^{\M_{n+1}(z)}$ where $\d$ is the least Woodin of $\M_{n+1}(z)$. We let $\Sigma_z$ be the strategy of $\M_{n+1}(z)$ restricted to stacks on $\W_z$. We now move to internalizing the direct limit construction to $\M_n(x)$ where $x$ is any real coding $\W_n$.

\subsection{Internalizing the directed system}\label{internalizing the directed system}

Fix a real $x$ that codes $\W_n$ and let $\d$ be the least Woodin of $\M_n(x)$. We will work with this $x$ until the end of this subsection. Notice that because $\M_{n}(x)|\d$ is closed under $S_n$ operator, if $\T\in \M_n(x)|\d$ is a short tree on $\W_n$ then if $b$ is such that $\T^\frown \M^\T_b$ is correctly guided then in fact $b\in \M_n(x)|\d$. Thus, $\Sigma_{\W_n}\rest\{\T\in \M_n(x)|\d: \T$ is short$\}$.

 How about maximal trees? We claim that $\Sigma_{\W_n}\rest \{ \T\in \M_n(x)|\d: \T$ is maximal$\}$ is not in $\M_n(x)$. To see this, assume otherwise.  By a result of Neeman from \cite{Neeman02}, there is a normal iterate $\Q\in HC^{\M_n(x)}$ of $\W_n$ via a tree of length $\omega$ such that there is some $\Q$-generic $g\subseteq Coll(\omega, \d^\Q)$ such that $g\in \M_n(x)$ and $x\in \Q[g]$. But this is a contradiction as $\Q$ is essentially a real in $\M_n(x)$ while $\mathbb{R}^{\M_n(x)}=S_n(x)\subseteq \Q[g]$. 
 
 Nevertheless, in the case of $n=0$, Woodin used $s$-iterability to track the iteration strategy of $\W_n$ inside $\M_n(x)$. We do that here for an arbitrary $n$. For the purpose of keeping the notation simple, while working in this subsection we let $\M=\M_n(x)$ and $\d$ be the least Woodin of $\M$. 

Notice that the notions such as suitable, short tree, maximal tree, correctly guided finite stack and etc are all definable over $\M$. This is because all these notions refer to the $S_n$ operator and $\M|\d$ is closed under the $S_n$ operator.  For instance, we have that $\Q\in \M|\d$,  $\Q$ is suitable iff $\M\models ``\Q$ is suitable". Notice, however, that $s$-iterability presents a difficulty as it is not immediately clear how to say ``a suitable $\P$ is $s$-iterable" inside $\M$. When $n=0$ and $s=\la a_j : j< l\ra$, one can just make do with \rdef{s-iterability}. This is because the ``guiding sets", $T^\P_{s, i}$, can be identified inside $L[x]$. In general, this doesn't seem to work because we need to correctly identify $T^\P_{s, i}$. If $\b>\max(s)$ is a uniform indiscernible then to identify $T^\P_{s, i}$ inside $\M$, it is enough to identify $\M_n(\P)|\b$ inside $\M_n(x)$. This is because
\begin{center}
$(t, \phi)\in T^\P_{s, m} \iff \phi$ is $\Sigma_1$ and $\M_n(\P)|\b\models \phi[t, s]$.
\end{center}
We then solve the problem by dropping to a smaller set of ``good" $\P$'s. This new set of good $\P$'s will nevertheless be dense in the old one. To start, we fix $\k<\d$ which is an inaccessible strong cutpoint cardinal of $\M$ such that $\M\models ``\k$ is a limit of strong cutpoint cardinals".

We let
\begin{center}
$\mathcal{G}_{\k}=\{ \P \in \M|\k: \P$ is suitable and $\M\models ``$ for some strong cutpoint $\eta$, $\d^\P=\eta^+$ and $\M|\eta$ is generic over $\P$ for $\d^\P$-generator version of the extender algebra at $\d^\P"\}$.
\end{center}
If $\P\in \mathcal{G}_{\k}$ then we let $\eta_\P$ be the ordinal witnessing that $\P\in \mathcal{G}_{\k}$. Recall the definition of $\S^\M(\N)$ (see \rdef{s(n)}).

\begin{lemma}\label{good points} Suppose $\P\in \M$ is suitable and such that for some strong cutpoint $\eta$ of $\M$, $\P|\d^\P\subseteq \M|(\eta^+)^\M$ and $\M|\eta$ is generic over $\P$ for the $\d^\P$-generator version of the extender algebra. Then $\P\in \mathcal{G}_\k$ and $\S^\M(\P)=\M_n(\P)$.
\end{lemma}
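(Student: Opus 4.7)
The plan is to verify the defining clauses of $\mathcal{G}_\k$ using the hypothesis, and then to drive the $\S$-construction of Proposition~\ref{s-constructions prop} over $\P$ inside $\M$ so that it produces $\M_n(\P)$.

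For $\P\in\mathcal{G}_\k$, the key point is to show $\d^\P=(\eta^+)^\M$. The inequality $\d^\P\le(\eta^+)^\M$ is immediate from $\P|\d^\P\subseteq\M|(\eta^+)^\M$, since $\d^\P$ is the ordinal height of $\P|\d^\P$. For the reverse, the $\d^\P$-generator extender algebra $\mathbb{B}^\P$ is $\d^\P$-c.c.\ and each generic is coded by a subset of $\d^\P$; hence the generic $G$ encoding $\M|\eta$ gives $|\M|\eta|^{\P[G]}\le\d^\P$, and since $\eta$ is an $\M$-cardinal with $\P[G]\subseteq\M$ we conclude $\eta\le\d^\P$. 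To rule out $\eta<\d^\P<(\eta^+)^\M$, I would use that $\d^\P$ is Woodin and hence a strong-limit cardinal of $\P$, preserved by the $\d^\P$-c.c.\ forcing $\mathbb{B}^\P$; any $\M$-bijection $f\colon\eta\to\d^\P$ would live in $\M|(\eta^+)^\M$ and, via the genericity, descend into $\P[\M|\eta]$, contradicting $\d^\P$ being a cardinal there. So $\d^\P=(\eta^+)^\M$, and since we are working with $\P\in\M|\k$ so that $\eta<\k$, we get $\P\in\mathcal{G}_\k$.

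For $\S^\M(\P)=\M_n(\P)$, I would apply Proposition~\ref{s-constructions prop} with $\d$ chosen to be $\eta$ (a strong cutpoint cardinal of $\M$), the base $\N$ taken to be the structure below $\eta$ extracted from $\P$, and the forcing $\mathbb{P}$ of that proposition taken to be $\mathbb{B}^\P$; the hypothesis that $\M|\eta$ is $\mathbb{B}^\P$-generic over $\P$, and hence over every mouse $\Q$ over $\N$ with $H_\eta^\Q=\N$, supplies the premise. The construction then produces the $\P$-mouse $\S=\S^\M(\P)$ with $\S[\M|\eta]=\M$, realized as a proper class $\P$-mouse inside $\M$. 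By construction the extenders of $\S$ above $\d^\P$ are exactly the extenders of $\M$ above $\d^\P$, and since $\d^\P<\k<\d$ (the least Woodin of $\M$), all $n$ Woodin cardinals of $\M=\M_n(x)$ lie above $\d^\P$ and are inherited by $\S$ as $n$ Woodins above $\P$. The minimality of $\M=\M_n(x)$ transfers through the equality $\S[\M|\eta]=\M$, ruling out any extra Woodin in $\S$ above $\d^\P$, and hence $\S=\M_n(\P)$.

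The main obstacle will be the fine-structural verification that the $\S$-construction runs all the way through $\M$: at each stage $\a$ one must check that $\M|\eta$ remains $\mathbb{B}^\P$-generic over $\bar{\S}_\a$, so that clause~2 of the definition triggers and every extender of $\M$ above $\d^\P$ is translated onto the $\S$-sequence without premature termination. This persistence is handled exactly as in the proof of Lemma~1.5 of~\cite{Selfiter}, using that $\mathbb{B}^\P$ is small relative to the relevant critical points and that iterability of $\M$ above $\eta$ transfers to iterability of $\S$ above $\d^\P$.
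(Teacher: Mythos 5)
Your overall strategy — show $\d^\P=(\eta^+)^\M$ to get $\P\in\mathcal{G}_\k$, then drive the $\S$-construction over $\P$ to produce $\M_n(\P)$ — matches the paper's. However, there is a real gap in the order of the argument for the cardinal computation. You try to rule out $\eta<\d^\P<(\eta^+)^\M$ by saying an $\M$-bijection $f\colon\eta\to\d^\P$ ``via the genericity, descends into $\P[\M|\eta]$,'' but nothing you have established so far makes $f$ a member of $\P[\M|\eta]$: the forcing extension $\P[\M|\eta]$ a priori contains only what is constructible from $\P$ together with the coded generic, and a random subset of $\eta$ living in $\M|(\eta^+)^\M$ has no reason to appear there. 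The paper resolves this by running the $\S$-construction \emph{first}, invoking Proposition~\ref{s-constructions prop} to rearrange $\M|(\eta^{+\omega})^\M$ as $\P[\M|\eta]$; once that identity is established, $\d^\P=(\eta^+)^\M$ is immediate by comparing the cardinal structure of the two sides (using $\d^\P$-c.c.\ of $\mathbb{B}^\P$ on the $\P$-side). In your version, the cardinal computation logically precedes the rearrangement, so the ``descent'' step is circular; reversing the two steps fixes this.

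For the identification $\S^\M(\P)=\M_n(\P)$, your appeal to ``minimality transfers'' is too coarse. Preserving exactly $n$ Woodins above $\d^\P$ does not by itself pin down $\M_n(\P)$: a proper (nondropping, above-$\d^\P$) iterate of $\M_n(\P)$ is still a proper-class $\P$-mouse with $n$ Woodins and no extras, so the count alone is insufficient, and ``no extra Woodins'' is not the same as minimality in the mouse order. The paper's argument is sharper and closes this: from $\S^\M(\P)[\M|\eta]=\M$ and the fact that $\M=\M_n(x)$ is the Skolem hull of $\omega$ together with a club class of indiscernibles (all lying above the small forcing $\mathbb{B}^\P$), one reads off that $\S^\M(\P)$ is the hull of the ordinals $<\d^\P$ and that same club of indiscernibles; and $\M_n(\P)$ is characterized precisely as the unique proper-class $\P$-mouse with $n$ Woodins that is such a hull. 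You should replace the ``minimality transfers'' sentence with this hull-of-indiscernibles argument. (A smaller issue: in applying Proposition~\ref{s-constructions prop} you propose $\mathbb{P}=\mathbb{B}^\P$ with $\N$ the part of $\P$ below $\eta$, but $\mathbb{B}^\P$ is built from extenders of $\P$ up to $\d^\P$, so it need not lie in $L_\omega[\N]$ for that choice of $\N$; the base of the construction has to be chosen so that the forcing is visible over it, which is why the paper phrases things as an $\S$-construction over $\P$ itself.)
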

\begin{proof}
Notice that using the $\S$-constructions, we can rearrange $\M|(\eta^{+\omega})^{\M}$ as $\P[\M|\eta]$ (see \rprop{s-constructions prop}).
Hence, $\d^\P=(\eta^+)^\M$.
 But then $\S^{\M}(\P)[\M|\eta]=\M$. This means that $\S^{\M}(\P)$ is the hull of ordinals $<\d^\P$ and the class of indiscernibles. But this is exactly what $\M_n(\P)$ is: it is the unique proper class mouse over $\P$ with $n$ Woodin cardinals which is the hull of a club class of indiscernibles.
\end{proof}

Let $\P\in \mathcal{G}_{\k}$ and $s=\la \a_j : j< l\ra$. 

\begin{definition}
We then write $\M\models ``\P$ is $s$-iterable below $\k$" if whenever $\VT=\la \T_j, \P_j : j<k\ra\in \M|\kappa$ is a correctly guided finite stack on $\P$ with last model $\Q$ such that $\Q\in \mathcal{G}_{\k}$ and whenever $g\subseteq Coll(\omega, \card{\P\cup \Q})$ is $\M$-generic, there is $\vec{b}=\la b_j : j<k\ra\in \M[G]$ such that for every $m$,
\begin{center}
$\pi_{\VT, \vec{b}}(T^\P_{s, m})=T^\Q_{s, m}$
\end{center}
where $T^\P_{s, m}\subseteq [((\d^\P)^{+m})^\P]^{<\omega}\times \omega$ is defined by
\begin{center}
$(t, \phi)\in T^\P_{s, m} \iff \phi$ is $\Sigma_1$ and $\S^{\M}(\P)\models \phi[t, s]$.
\end{center}
and $T^\Q_{s, m}\subseteq [((\d^\Q)^{+m})^\Q]^{<\omega}\times \omega$ is defined by
\begin{center}
$(t, \phi)\in T^\Q_{s, m} \iff \phi$ is $\Sigma_1$ and $\S^{\M}(\Q)\models \phi[t, s]$.
\end{center}
\end{definition}

Notice that in the light of \rlem{good points}, the definition just given indeed coincides with \rdef{s-iterability} for as long as we stay inside $\mathcal{G}_{\k}$. $\M\models ``\P$ is strongly $s$-iterable below $\k$" is defined similarly. Also, notice that even though the requirement that the sequence $\vec{b}$ exists in the generic extension cannot be dropped, the embedding $\pi_{\P, \Q, s}$ is in $\M$ as it is unique and hence, it is in all generic extensions.

We then let
\begin{center}
$\mathcal{I}_{\k}=\{(\P, s) : \P\in \mathcal{G}_\k \wedge \M\models ``\P$ is strongly $s$-iterable below $\k"\}$.
\end{center}
and
\begin{center}
$\mathcal{F}_{\k}=\{ H^\P_s : (\P, s)\in \mathcal{I}_\k\}$.
\end{center}
Notice that the proof of \rlem{s-iterability lemma} can be used to show that for every $s$ there is $\P$ such that $(\P, s)\in \mathcal{I}_\k$. More formally, we have the following:

\begin{lemma}\label{internal s-iterability lemma} Suppose $\P\in \mathcal{G}_{\k}$ and $s$ is a finite sequence of ordinals. Then there is a normal correct iterate $\Q$ of $\P$ such that $(\Q, s)\in \mathcal{I}_{\k}$
\end{lemma}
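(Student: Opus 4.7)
The plan is to import the external \rlem{strongly s-iterability lemma} into $\M$ and then drive the resulting iterate into $\mathcal{G}_\k$ by a genericity iteration. Since $\P\in\mathcal{G}_\k$, \rlem{good points} gives $\S^\M(\P)=\M_n(\P)$, so $\P$ carries a canonical external iteration strategy $\Sigma_\P$, inherited from the iterability of $\M$. Work in $V$ and apply \rlem{strongly s-iterability lemma} to obtain a strongly $s$-iterable $n$-suitable $\R$ that is a normal correct iterate of $\W_n$. Comparing $\P$ against $\R$ via $\Sigma_\P$ and $\Sigma_\R$ produces a normal correctly guided tree $\T_0$ on $\P$ with last model $\Q_0$; since $\Q_0\in I(\R,\Sigma_\R)$, the Dodd--Jensen property of $\Sigma_\R$ transfers strong $s$-iterability from $\R$ to $\Q_0$.

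Next choose a strong cutpoint cardinal $\eta<\k$ of $\M$ large enough that $\T_0\in \M|\eta$, and concatenate $\T_0$ with Woodin's genericity iteration on $\Q_0$ (guided by $\Sigma_{\Q_0}$) that forces $\M|\eta$ to be generic over the final model for the $\d$-generator extender algebra at its Woodin. Let $\T$ be the resulting normal tree on $\P$ and $\Q$ its last model. Because $\k$ is inaccessible in $\M$ and a limit of strong cutpoint cardinals, the whole of $\T$ sits in $\M|\k$, and its correctness is witnessed by the fact that the short-tree $\Q$-structures along $\T$ are computed by the $S_n$-operator, which is available throughout $\M|\d$. By \rlem{good points}, $\Q\in \mathcal{G}_\k$ with $\eta_\Q=\eta$, and Dodd--Jensen once more transfers external strong $s$-iterability from $\Q_0$ to $\Q$.

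It remains to check $\M\models``\Q$ is strongly $s$-iterable below $\k"$. Fix a correctly guided finite stack $\VT=\la \T_j,\P_j:j<k\ra\in \M|\k$ on $\Q$ with last model $\Q'\in \mathcal{G}_\k$ and an $\M$-generic $g\subseteq Coll(\omega,\card{\Q\cup \Q'})$. The short-tree components of $\VT$ have their branches identified inside $\M$ by $\Q$-structures $\trianglelefteq \M_n(\M(\T_j))$, so only any maximal component needs a branch supplied in $\M[g]$. External strong $s$-iterability of $\Q$ yields in $V$ a sequence $\vec{b}$ with $\pi_{\VT,\vec{b}}(T^\Q_{s,m})=T^{\Q'}_{s,m}$ for every $m$, and uniqueness of $\pi_{\VT,\vec{b}}\rest H_s^\Q$ from \rlem{uniqueness of s-iterability embeddings} already pins down the relevant restriction in $\M$. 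The \emph{main obstacle} is showing that in $\M[g]$, where $\VT$ has become countable, a full cofinal well-founded $\vec{b}$ witnessing the $T$-set agreement still exists: one argues that this existence statement is expressible in terms of the $\M_n$-operator applied to a real coding $\VT$, and $\M[g]$ is closed under $\M_n$ on its countable transitive sets (since $\M=\M_n(x)$), so the $V$-witness reflects into $\M[g]$ by correctness of the $\M_n$-operator. The ``strong" uniqueness clause then descends from $V$ to $\M[g]$ via \rlem{uniqueness of s-iterability embeddings}.
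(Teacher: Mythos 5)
Your overall plan — use \rlem{strongly s-iterability lemma} externally, then drive the resulting iterate into $\mathcal{G}_\k$ by a genericity iteration, then argue absoluteness — is the right shape and almost certainly what the paper has in mind (the paper gives no explicit proof, pointing only to the argument for \rlem{s-iterability lemma}). But two steps need repair.

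First, the assertion ``choose $\eta<\k$ large enough that $\T_0\in \M|\eta$'' and the later claim ``the whole of $\T$ sits in $\M|\k$'' are not justified and in general false. The comparison tree $\T_0$ between $\P$ and the external $\R$ is built with $\Sigma_\P$, and in particular the extenders applied on the $\P$-side are selected by disagreement with $\R$, which $\M$ does not know; so $\T_0$ need not be an element of $\M$ at all (the fact that the short-tree $\Q$-structures are computable from $S_n$ only shows $\M$ can recover the branch choices at limit stages if it already knows the extender sequence, which it does not). Fortunately you do not need $\T_0\in\M$: the lemma asks only that $\Q$ be a normal correct iterate of $\P$ in $V$ and that $\Q\in\mathcal{G}_\k$, i.e.\ $\Q\in\M|\k$. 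What you should instead argue is that after the genericity iteration, $\M|\eta$ is $\Q$-generic for $\mathbb{B}^\Q$ and $\Q[\M|\eta]$ reorganizes (via \rprop{s-constructions prop}) as the appropriate cutoff of $\M$, whence $\Q|\d^\Q\in\M|(\eta^+)^\M$ and then $\Q=\S^\M(\Q|\d^\Q)\in\M$. Only at that point can you invoke \rlem{good points}; as stated, \rlem{good points} has ``$\P\in\M$'' in its hypothesis and so does not by itself put $\Q$ into $\M$. Relatedly, the normality of the concatenation $\T_0{}^\frown\T_1$ is not automatic; either do the comparison and genericity coiteration as a single normal tree, or simply accept a finite correctly guided stack (the internal definitions permit this) and strengthen to a single tree afterward by a final comparison inside $\M$.

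Second, the closing absoluteness paragraph is too vague to carry the weight it needs. You should say explicitly: after $g$ collapses $\card{\Q\cup\Q'}$, the data $\VT$, $\Q$, $\Q'$ and the sets $T^\Q_{s,m}$, $T^{\Q'}_{s,m}$ ($m<\omega$) are all countable in $\M[g]$ and codable by a single real $y\in\M[g]$; the $T$-sets are computed in $\M[g]$ via $\S^\M$ and agree with the external ones by \rlem{good points}. The statement ``there exists $\vec{b}$ consisting of cofinal branches with wellfounded branch models such that $\pi_{\VT,\vec b}(T^\Q_{s,m})=T^{\Q'}_{s,m}$ for all $m$'' is then $\Sigma^1_2(y)$. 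It holds in $V$ by the external strong $s$-iterability of $\Q$, and $\M[g]$ — being (a generic extension of) $\M_n(x)$ — is $\Sigma^1_2$-correct (Shoenfield when $n=0$, and a fortiori for $n\geq 1$). That is the precise sense in which ``the $V$-witness reflects into $\M[g]$''; ``$\M[g]$ is closed under the $\M_n$-operator'' is not by itself an absoluteness principle and does not directly give downward reflection of a $\Sigma^1_2$ existential.
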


Clearly, $\mathcal{F}_{\k}\in \M$. We then define $\leq_{\k}$ on $\mathcal{I}_{\k}$ by: $(\P, s)\leq_{\k} (\Q, t)$ iff $\Q$ is a correct iterate of $\P$ and $s\subseteq t$. It is not hard to see that $\leq_{\k}$ is directed.
\begin{lemma} $\leq_{\k}$ is directed
\end{lemma}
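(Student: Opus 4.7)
The plan is to adapt the external directedness argument for $\leq_n$ (given just before \rlem{wellfoundness}) to the internal setting inside $\M = \M_n(x)$. Given $(\P, s), (\Q, t) \in \mathcal{I}_\k$, I set $u = s \cup t$ and aim to produce $(\R, u) \in \mathcal{I}_\k$ with $(\P, s) \leq_\k (\R, u)$ and $(\Q, t) \leq_\k (\R, u)$.

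First I would apply \rlem{internal s-iterability lemma} to $\P$ and the sequence $u$ to obtain a normal correct iterate $\R_0$ of $\P$ with $(\R_0, u) \in \mathcal{I}_\k$. Everything from here on takes place inside $\M|\k$, which is closed under the $S_n$ operator and in which $\k$ is inaccessible and a limit of strong cutpoints. Next, I would simultaneously compare $\P, \Q, \R_0$ via correctly guided finite stacks inside $\M|\k$. The comparison is well defined there: at a short tree the correctly guided branch is identified by a $\Q$-structure which is a proper initial segment of $\M_n(\M(\T)) = S_n(\M(\T))$ and hence lies in $\M|\k$; at a maximal tree the stack is continued by definition with $\M_n(\M(\T))|(\d(\T)^{+\omega})^{\M_n(\M(\T))}$, which is also in $\M|\k$. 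The standard comparison argument, using that $\k$ is inaccessible in $\M$ and that all three models are suitable, then terminates with a common iterate $\R \in \M|\k$ of $\P, \Q$, and $\R_0$.

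It remains to verify $(\R, u) \in \mathcal{I}_\k$. For $\R \in \mathcal{G}_\k$ I would push the strong cutpoint $\eta_{\R_0}$ witnessing $\R_0 \in \mathcal{G}_\k$ forward through the iteration $\R_0 \to \R$, using that $\k$ is a limit of strong cutpoints of $\M$ to locate a new witness $\eta_\R < \k$ over which $\M|\eta_\R$ is $\R$-generic for the $\d^\R$-generator extender algebra of $\R$. Strong $u$-iterability below $\k$ transfers from $\R_0$ to $\R$: given a correctly guided finite stack $\VT$ on $\R$ in $\M|\k$ with last model $\Q' \in \mathcal{G}_\k$, I would concatenate the stack witnessing $\R_0 \leq_\k \R$ with $\VT$ to obtain a stack $\VT^*$ on $\R_0$; in any $\M$-generic extension collapsing the relevant models the $u$-iterability branches for $\VT^*$ guaranteed by the strong $u$-iterability of $\R_0$ restrict to branches $\vec b$ for $\VT$, and the required identities $\pi_{\VT, \vec b}(T^\R_{u, m}) = T^{\Q'}_{u, m}$ follow by factoring through the embedding $\pi_{\R_0, \Q', u}$, whose uniqueness further delivers uniqueness of $\pi_{\R, \Q', u}$.

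The hard part will be the verification that the extender algebra genericity condition defining $\mathcal{G}_\k$ really does survive the simultaneous internal comparison of three suitable premice; this is the step where the hypothesis that $\k$ is a limit of strong cutpoints of $\M$ enters essentially, as it provides the room to select $\eta_\R$ above all disagreements produced during the comparison.
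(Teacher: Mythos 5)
The core of your argument matches the paper's: start with a strongly $u$-iterable pair (where $u = s\cup t$), simultaneously compare inside $\M|\k$, and verify the result lies in $\mathcal{I}_\k$. But the last step — getting the final model into $\mathcal{G}_\k$ — is where your argument has a genuine gap. You attempt to "push the strong cutpoint $\eta_{\R_0}$ forward through the iteration $\R_0\to\R$" and "locate a new witness $\eta_\R < \k$ over which $\M|\eta_\R$ is $\R$-generic for the $\d^\R$-generator extender algebra of $\R$." That cannot work as stated: genericity of $\M|\eta_\R$ over $\R$ for the extender algebra is a demanding condition requiring that $\M|\eta_\R$ satisfy the $\d^\R$-many extender algebra axioms relative to $\R$, and the model $\R$ produced by comparison alone will in general fail this for every $\eta_\R$. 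Nothing about $\k$ being a limit of strong cutpoints of $\M$ "locates" such an $\eta_\R$; you need to \emph{earn} the genericity.

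The paper resolves this by performing a further genericity iteration, which is the step your proposal omits: after the simultaneous comparison yields $\S^*\in\M|\k$, pick a strong cutpoint $\eta<\k$ of $\M$ with $\S^*\in\M|\eta$ and then iterate $\S^*$ (using the extender algebra at $\d^{\S^*}$) so that $\M|\eta$ becomes generic over the resulting iterate $\S$. That iteration forces $\d^\S = (\eta^+)^\M$ with $\M|\eta$ generic, which is exactly the definition of membership in $\mathcal{G}_\k$; combined with your (correct) observation that strong $u$-iterability transfers along correct iterates starting from $(\R_0,u)\in\mathcal{I}_\k$, one then has $(\S, u)\in\mathcal{I}_\k$ and both $(\P,s)\le_\k(\S,u)$ and $(\Q,t)\le_\k(\S,u)$. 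So the missing idea is not "room to select $\eta_\R$" but the Woodin-style genericity iteration that makes $\M|\eta$ generic over the final model.
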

\begin{proof}
Fix $(\P, s), (\Q, t)\in \mathcal{I}_{\k}$. Then there is $(\R, s\cup t)\in \mathcal{I}_{\k}$. Working in $\M$, simultaneously compare $\P, \Q$ and $\R$ to get $\S^*\in \M|\kappa$. Let $\eta<\k$ be a strong cutpoint of $\M$ such that $\S^*\in \M|\eta$. Then iterate $\S^*$ to make $\M|\eta$-generic. This iteration produces $\S\in \M|\kappa$ such that $\d^\S=(\eta^+)^{\M}$. It then follows that  $(\S, s\cup t)\in \mathcal{I}_{\k}$ and $(\P, s), (\Q, t) \leq_\k (\S, s\cup t)$.
\end{proof}

Let then $\M_{\infty, \k}$ be the direct limit of $(\mathcal{F}_{\k}, \leq_{\k})$ under the embeddings $\pi_{\P, \Q, s}$. We first claim that $\M_{\infty, \k}$ is well-founded.
\begin{lemma}\label{internal wellfoundness} $\M_{\infty, \k}$ is well-founded.
\end{lemma}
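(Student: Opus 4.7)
The proof will mirror that of \rlem{wellfoundness}. The plan is to define a $\Sigma_1$-elementary map $\pi:\M_{\infty,\k}\to \M_{\infty,n}^+$. Since $\M_{\infty,n}^+$ is well-founded (as noted in the text and used in \rlem{wellfoundness}), any such $\pi$ propagates well-foundedness back to $\M_{\infty,\k}$, settling the lemma.

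I would define $\pi$ as follows. Given $x\in \M_{\infty,\k}$, choose $(\P,s_m)\in \mathcal{I}_\k$ and $y\in H_m^\P$ with $\pi^\k_{\P,\infty,s_m}(y)=x$, where $\pi^\k_{\P,\infty,s_m}$ is the internal direct limit embedding computed inside $\M$. By \rlem{good points}, $\S^\M(\P)=\M_n(\P)$, so the $T^\P_{s_m,i}$ defined internally via $\S^\M(\P)$ coincide with the externally defined ones via $\M_n(\P)$. This has two consequences. First, $\P$ is $n$-suitable externally and inherits an iteration strategy from $\M_n(\P)$; comparing $\P$ to $\W_n$ in $V$ using $\Sigma_{\W_n}$ and invoking the Dodd-Jensen property places $\P\in \mathcal{F}_n^+$. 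Second, for $(\P,s_m)\leq_\k (\R,s_u)$, the internal embedding $\pi^\k_{\P,\R,s_u}\rest H_m^\P$ coincides with $i_{\P,\R}\rest H_m^\P$, since by \rlem{uniqueness of s-iterability embeddings} both are characterized by preserving the common $T$-sets. Set $\pi(x):= i_{\P,\infty}(y)$.

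Independence of the choice of $(\P,s_m,y)$ follows by passing via directedness of $\leq_\k$ to a common $(\R,s_u)$ above and invoking the internal/external coincidence of embeddings just noted: if $(\Q,s_l,z)$ is a second representation of $x$, then $i_{\P,\infty}(y)=i_{\R,\infty}(i_{\P,\R}(y))=i_{\R,\infty}(i_{\Q,\R}(z))=i_{\Q,\infty}(z)$. An analogous argument, realizing finitely many points of $\M_{\infty,\k}$ inside a single $H_m^\P$, shows $\pi$ is $\Sigma_1$-elementary, paralleling the corresponding step in the proof of \rlem{wellfoundness}.

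The hard part is the first consequence drawn from \rlem{good points}, namely turning internal data about $\P$ into the external statement that $\P$ is a correct $\Sigma_{\W_n}$-iterate of $\W_n$. This requires the external $S_n$-operator to agree with the internal one on $\P$ (which is precisely $\S^\M(\P)=\M_n(\P)$), together with an external comparison against $\W_n$ and an appeal to the Dodd-Jensen property of $\Sigma_{\W_n}$. Once this correspondence is secured, the remainder of the argument is routine manipulation of direct limits and the uniqueness of $s$-iterability embeddings.
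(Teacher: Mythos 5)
Your broad strategy---defining a $\Sigma_1$-elementary map from $\M_{\infty,\k}$ into the externally well-founded $\M_{\infty,n}^+$ and pushing well-foundedness back---is reasonable and close in spirit to the proof of \rlem{wellfoundness}, and the observation that \rlem{good points} identifies the internal $T^\P_{s_m,i}$ with the external ones is the right starting point. However, the step concluding ``$\P\in\mathcal{F}_n^+$'' is a genuine gap.

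Membership of $\P$ in $\mathcal{G}_\k$ guarantees that $\P$ is suitable and that $\M|\eta_\P$ is generic over $\P$; it does \emph{not} guarantee that $\P$ arose as a $\Sigma_{\W_n}$-iterate of $\W_n$. Comparing $\P$ against $\W_n$ in $V$ produces a \emph{common} correct iterate $\R$ (which does lie in $\mathcal{F}_n^+$), not a proof that $\P$ itself is a $\Sigma_{\W_n}$-iterate of $\W_n$; Dodd-Jensen controls the $\W_n$-to-$\R$ map, it does not retroactively place $\P$ between them. So for a general $(\P,s_m)\in\mathcal{I}_\k$ the map $i_{\P,\infty}:\P\to\M_{\infty,n}^+$ is simply not defined, and the formula $\pi(x):=i_{\P,\infty}(y)$ is ill-formed. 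The same issue infects the well-definedness and $\Sigma_1$-elementarity checks, which invoke embeddings $i_{\P,\R}$ that also presuppose $\P\in\mathcal{F}_n^+$. Note that the proof of \rlem{wellfoundness} explicitly requires the representative $\P$ to be a normal correct iterate of $\W_n$; that side condition is essential, and the analogous density fact here (that every $x\in\M_{\infty,\k}$ can be represented over a $\P$ which is \emph{both} in $\mathcal{G}_\k$ and a correct iterate of $\W_n$) is exactly what your argument would need to establish first. The natural fix is to pass to such an $\R$ and set $\pi(x)=i_{\R,\infty}(\pi_{\P,\R,s_m}(y))$, after checking that $\R$ can be taken inside $\mathcal{G}_\k$ (the comparison with $\W_n$ can be done in $\M|\k$, and one then iterates to restore genericity) and that $\pi_{\P,\R,s_m}(y)$ still represents $x$.

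The paper avoids the issue by taking a more constructive route. Working in $V$, it builds an $\omega$-chain $\la\Q^0_i:i<\omega\ra$ of $\Sigma_{\W_n}$-iterates of $\W_n$, all lying in $\mathcal{G}_\k$, with the property that every $\P\in\mathcal{G}_\k$ is eventually dominated by some $\Q^0_i$ (using a fixed $\M$-enumeration of $\mathcal{G}_\k$ of length $\k$, simultaneous comparison of initial segments of that enumeration, and genericity iterations to return to $\mathcal{G}_\k$). The direct limit of this chain along the genuine iteration embeddings is automatically well-founded, and the argument of \rlem{wellfoundness} then identifies it with $\M_{\infty,\k}$. This sandwiching never needs an arbitrary $\P\in\mathcal{G}_\k$ to itself be an iterate of $\W_n$---only to be dominated by one---which is precisely what your proposal, as written, fails to separate.
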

\begin{proof} The proof is similar to the proof of \rlem{wellfoundness}. Let $\la \P_\a: \a<\k\ra\in \M$ be an enumeration of $\mathcal{G}_{\k}$. We construct a sequence $\la \Q^0_i, \T^0_i, \Q^1_i, \T^1_i : i<\omega\ra$ such that
\begin{enumerate}
\item $\Q^0_0=\W_n$ and $\T^l_i$ is a normal correctly guided tree on $\Q^l_i$ for $l= 0, 1$,
\item $\Q^1_{i}$ is the last model of $\T^0_i$ and $\Q^0_{i+1}$ is the last model of $\T^1_i$,
\item for every $\a<\k$, there is $i<\omega$ such that $\Q^0_i$ is a correct iterate of $\P_\a$,
\item $\Q^0_i\in \mathcal{G}_{\k}$.
\end{enumerate}
To construct such a sequence, we first fix $\la \eta_i : i<\omega\ra$ such that $\sup_{i<\omega}\eta_i=\k$. Suppose we have constructed $\la \Q^0_i, \T^0_i, \Q^1_i, \T^1_i : i\leq k\ra$. Let $\eta\in [\eta_i, \k)$ be a strong cutpoint of $\M$ such that $\la \Q^0_i, \T^0_i, \Q^1_i, \T^1_i : i\leq k\ra\in \M|\eta$. Thus, we actually have $\Q^0_{k+1}$. Then let $\Q^1_{k+1}$ be the result of simultaneously comparing all suitable $\P$'s such that $\P\in \M|\eta\cap \mathcal{G}_\k$. Notice that $\S$ is a normal correct iterate of every $\P \in \M|\eta\cap \mathcal{G}_\k$ including $\Q^0_{k+1}$. Let then $\T^0_{k+1}$ be the normal correctly guided tree on $\Q^0_{k+1}$ with last model $\Q^1_{k+1}$. The problem is that $\Q^1_{k+1}$ may not be in $\mathcal{G}_\k$. Let then $\nu\in (\eta, \k)$ be a strong cutpoint of $\M$ such that $\Q^1_{k+1}\in \M|\nu$. Iterate $\Q^1_{k+1}$ to make $\M|\nu$ generic for the extender algebra. Let then $\T^1_{k+1}$ be the resulting tree on $\Q^1_{k+1}$. Clearly $\Q^1_{k+1}\in \mathcal{G}_\k$ and the resulting sequence $\la \Q^0_i, \T^0_i, \Q^1_i , \T^1_i : i<\omega\ra$ is as desired.

Let then $\sigma_{j, k}=i_{\Q^0_j, \Q^0_k}$ and let $\Q$ be the direct limit of $\la \Q^0_j, \sigma_{j, k} : j<k<\omega\ra$. Then the proof of \rlem{wellfoundness} can be used to show that in fact $\Q=\M_{\infty, \k}$.
\end{proof}

Next we show that $\d^{\M_{\infty, \k}}=(\k^+)^{\M}$. For the purpose of keeping the notation nice, in this subsection we abuse the notation used in the previous subsection and whenever $(\P, s)\in \mathcal{I}_\k$, we write $\pi_{\P, \infty, s}$ for the direct limit embedding. Thus, $\pi_{\P, \infty, s}$ is an embedding that acts on $H_s^\P$ and embeds it into the corresponding structure in $\M_{\infty, \k}$. For each $s\in Ord^{<\omega}$, let $\gg_{\infty, s}=\sup (\pi_{\P, \infty, s}"\gg^\P_s)$ where $(\P, s)\in \mathcal{I}_\k$. Clearly, $\gg_{\infty, s}$ is independent of the choice of $\P$. Notice that $\d^{\M_{\infty, \k}}=\sup_{s\in Ord^{<\omega}} \gg_{\infty, s}=\sup_{m<\omega}\gg_{\infty, s_m}$. Our proof uses an idea that originated in Hjorth's work.

\begin{lemma}
$\d^{\M_{\infty, \k}}=(\k^+)^{\M}$.
\end{lemma}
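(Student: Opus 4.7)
I would prove the two inequalities separately.  For the upper bound $\delta^{\M_{\infty, \kappa}} \leq (\kappa^+)^\M$, a counting argument suffices: every ordinal below $\delta^{\M_{\infty, \kappa}}$ arises as $\pi_{\P, \infty, s}(\beta)$ for some $(\P, s) \in \mathcal{I}_\kappa$ and $\beta < \gamma_s^\P$, and since $\P \in \mathcal{G}_\kappa \subseteq \M|\kappa$, $\beta < \delta^\P < \kappa$, and $\kappa$ is inaccessible in $\M$, there are at most $\kappa$-many such representatives in $\M$, giving $|\delta^{\M_{\infty, \kappa}}|^\M \leq \kappa$ and hence $\delta^{\M_{\infty, \kappa}} \leq (\kappa^+)^\M$.

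For the lower bound $\delta^{\M_{\infty, \kappa}} \geq (\kappa^+)^\M$, my plan is to exploit the richness of $\mathcal{G}_\kappa$ in $\M$ together with a passage to a generic extension where the external direct limit analysis from the previous subsection applies.  First, by \rprop{s-constructions prop} and \rlem{good points}, for every strong cutpoint $\eta < \kappa$ of $\M$ and every suitable $\P' \in \M|\eta$, a genericity iteration of $\P'$ over $\M|\eta$ produces $\P \in \mathcal{G}_\kappa$ with $\delta^\P = (\eta^+)^\M$; since $\kappa$ is a limit of strong cutpoints in $\M$, the set $\{\delta^\P : \P \in \mathcal{G}_\kappa\}$ is cofinal in $\kappa$.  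To leverage this, I would force with $\mathrm{Coll}(\omega, \kappa)$ over $\M$ to get a generic $g$, so that $(\mathcal{I}_\kappa, \leq_\kappa)$ becomes countable in $\M[g]$.  In $\M[g]$, by an argument parallel to \rlem{internal wellfoundness} (which itself parallels \rlem{wellfoundness}), the internal direct limit $\M_{\infty, \kappa}$ agrees with the direct limit built from those countable $\Sigma_{\W_n}$-iterates of $\W_n$ landing in $\mathcal{G}_\kappa$.  Applying the construction in the proof of \rlem{internal wellfoundness} then yields a cofinal $\omega$-sequence of iterates $\Q^0_i \in \mathcal{G}_\kappa$ with Woodins $\delta^{\Q^0_i} = (\eta_i^+)^\M$ such that $\sup_i \eta_i = \kappa$, and the composition of iteration embeddings $i_{\Q^0_i, \Q^0_{i+1}}$ stretches these Woodins so that $\sup_i \pi_{\Q^0_i, \infty, s_m}(\delta^{\Q^0_i}) \geq (\kappa^+)^\M$ in $\M[g]$.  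Since $\M_{\infty, \kappa} \in \M$ and its Woodin ordinal is absolute between $\M$ and $\M[g]$, the lower bound transfers back to $\M$.

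The main obstacle is verifying that the iteration embeddings really do stretch the $\delta^{\Q^0_i}$'s cofinally into $(\kappa^+)^\M$ rather than stabilizing below.  Internally we only have the approximate $s$-iterability embeddings, not the full strategy $\Sigma_{\W_n}$, so the stretching has to be controlled through \rlem{uniqueness of s-iterability embeddings} to ensure the branches chosen at successive stages are compatible with the correct branches from $\Sigma_{\W_n}$ (which exist externally, and in $\M[g]$).  The density of genuine $\Sigma_{\W_n}$-iterates in $\mathcal{G}_\kappa$ (built via repeated genericity iterations at strong cutpoints cofinal in $\kappa$) will rule out any spurious collapse of ordinals in the direct limit, yielding the desired lower bound and completing the equality $\delta^{\M_{\infty, \kappa}} = (\kappa^+)^\M$.
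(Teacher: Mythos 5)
Your upper bound argument ($\delta^{\M_{\infty,\kappa}} \leq (\kappa^+)^\M$ by counting representatives in $\M|\kappa$) is correct and is essentially the paper's argument for that direction.

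The lower bound, however, has a genuine gap: you never give a mechanism forcing $\delta^{\M_{\infty,\kappa}}$ to be \emph{large}. Your central claim, that "the composition of iteration embeddings $i_{\Q^0_i, \Q^0_{i+1}}$ stretches these Woodins so that $\sup_i \pi_{\Q^0_i, \infty, s_m}(\delta^{\Q^0_i}) \geq (\kappa^+)^\M$," is precisely the statement that needs to be proved and is asserted without a reason. (The notation is also off: $\delta^{\Q^0_i}$ is not an ordinal below $\gamma_{s_m}^{\Q^0_i}$, so $\pi_{\Q^0_i, \infty, s_m}(\delta^{\Q^0_i})$ is not literally defined; as a parameter, $\delta^{\Q^0_i}$ is sent to $\delta^{\M_{\infty,\kappa}}$ by every direct-limit map, making the "sup" circular.) Having $\delta^{\Q^0_i}$ cofinal in $\kappa$ and the system directed does not by itself rule out the direct limit ordinal height being a \emph{small} ordinal when measured by cardinals of $\M$; direct limits don't spuriously collapse along embeddings, but they can still be short. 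Phrases like "the density of genuine $\Sigma_{\W_n}$-iterates... will rule out any spurious collapse" and the pass through $\mathrm{Coll}(\omega,\kappa)$ are not doing actual work here. The paper's proof of the lower bound is by contradiction and is the heart of the lemma: assume $\delta^{\M_{\infty,\kappa}} < (\kappa^+)^\M$ and take $\kappa$ least such; then some $\leq^* \in \M$ well-orders $\kappa$ with length $\delta^{\M_{\infty,\kappa}}$ and, by minimality, is definable over $\M$ from some $t \in [\kappa]^{<\omega}$ and $s_m$. Fixing $(\P, s_m) \in \mathcal{I}_\kappa$ with $t$ below the least measurable of $\P$ and using that $\M = \S^\M(\P)[\M|\eta_\P]$ with $\M|\eta_\P$ generic for the extender algebra at $\delta^\P$, one translates ranks $|\alpha|_{\leq^*}$ into antichains $\mathcal{A}^\P \in H_{s_{m+1}}^\P$. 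The crucial step is the existence of a \emph{stable code} $(\P_\alpha, p_\alpha)$ for each $\alpha < \kappa$, proved by a decreasing-ordinals argument in a direct limit of $\Sigma_{\W_n}$-iterates. This produces an order-preserving injection of $\leq^*$ into a relation $R$ of length $\gamma_{\infty, s_{m+1}} < \delta^{\M_{\infty,\kappa}}$, a contradiction. That stable-code machinery is what your sketch is missing, and I don't see how your density-plus-forcing outline could be completed into an argument without essentially reproducing it.
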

\begin{proof}
First notice that for every $\a<\d^{\M_\infty, \k}$ there is in $\M$ a surjective map $f:\kappa\rightarrow \a$. To see this, first fix $s$ such that $\a<\gg_{\infty, s}$ and let $\la (\P_\b, \xi_\b): \b<\k\ra$ be an enumeration of the set $\{ (\P, \xi) : (\P, s)\in \mathcal{I}_\k \wedge \xi<\gg_s^\P\}$. Then let $f(\b)=\pi_{\P_\b, \infty, s}(\xi_\b)$. Clearly $\a\subseteq ran(f)$ and $f$ is onto. This observation shows that $\d^{\M_{\infty, \k}}\leq (\k^+)^{\M}$.

We therefore need to show that $\d^{\M_{\infty, \k}}\not < (\k^+)^{\M}$. Suppose then $\d^{\M_{\infty, \k}} < (\k^+)^{\M}$. We can then let $\leq^*\in \M$ be a well-ordering of $\k$ of length $\d^{\M_{\infty, \k}}$. Without loss of generality we assume $\k$ is least such that $\d^{\M_{\infty, \k}} <(\k^+)^{\M}$. It then follows that there is a formula $\phi$, a sequence $t\in [\k]^{<\omega}$ and an integer $m$ such that
\begin{center}
$\a\leq^*\b \iff \M\models \phi[t, s_m, \a, \b]$.
\end{center}
Now, fix $(\P, s_m)\in \mathcal{I}_\k$ such that $t\subseteq \l$ where $\l$ is the least measurable cardinal of $\P$. Let $\N=\M_n(\P)=\S^{\M}(\P)$. We have that $\M|\eta_{\P}$ is generic over $\P$ for the extender algebra of $\d^\P$. This means that $\N[\M|\eta_\P]$ can be reorganized as an $x$-mouse and in fact, $\N[\M|\eta_\P]=\M$. This then means that there are conditions $p$ which force that $\N[G]$ can be reorganized via $\S$-constructions as a mouse over a real and such that in $\N[G]$, $\d^{\M_{\infty, \k}}< (\k^+)^{\N[G]}$. Moreover, among those conditions there are also conditions that force that $\phi$ defines a well-ordering of $\k$ as above over $\N[G]$. Let then $D$ be the set of conditions $p$ of the extender algebra at $\d^\P$ such that $p$ forces that
\begin{enumerate}
\item $\N[G]$ can be reorganized as a premouse over a real,
\item $\N[G]\models ``\d^{\M_{\infty, \k}} < (\k^+)^{\N[G]}"$,
\item $\phi$ defines a well-ordering of $\k$ of length $(\d^{\M_{\infty, \k}})^{\N[G]}$.
\end{enumerate}

We let $\tau$ be the name of the prewellordering given by $\phi$. Consider now the set $B$ of pairs $(p, \a)$ such that $p\in D$, $\a<\l$ and for some $\xi$, in $p$ forces that the rank of $\a$ . Notice that whenever $(p, \a)\in B$ and $G$ is $\P$-generic such that $p\in G$, $\a$ has a rank in the well-ordering given by $\phi$ over $\N[G]$. We can then for each $\a<\l$ choose a maximal antichain of conditions $p$ such that $(p, \a)\in B$ and for some $\xi$, $p$ forces that $\a$ has rank $\xi$ in the well-ordering given by $\phi$. Let $\mathcal{A}_\a$ be such an antichain and let $\mathcal{A}=\{ (p, \a) : p\in \mathcal{A}_\a\}$. Notice that without loss of generality we can assume that $\mathcal{A}\in H_{m+1}^\P$. We then let $\mathcal{A}^\P=\mathcal{A}$.

For $(p, \a)\in \mathcal{A}$ let $\xi_{p, \a}$ be the rank of $\a$ as forced by $p$. Define $\leq^\P$ on $\mathcal{A}$ by $(p, \a)\leq^\P (q, \b)$ iff $\xi_{p, \a}\leq \xi_{q, \b}$. Notice that $\card{\leq^\P}$ is independent of the choice of $\mathcal{A}_\a$'s and $\card{\leq^\P}< \gg_{m+1}^\P$.

Define now a relation $R$ on the set $\{ (P, \xi) : \P\in \mathcal{G}_\k \wedge \xi<\gg_{m+1}^\P\}$ given by
\begin{center}
$R((\P, \xi), (\Q, \nu))$ if whenever $\R$ is such that $(\P, s_{m+1})\leq_\k (\R, s_{m+1})$ and $(\Q, s_{m+1})\leq_\k (\R, s_{m+1})$ then $i_{\P, \R, s_{m+1}}(\xi)\leq i_{\Q, \R, s_{m+1}}(\nu)$.
\end{center}
Clearly $R$ is well-founded and $\card{R}=\gg_{\infty, s_{m+1}}$.

Fix now an $\a<\k$. We say that $(\P, p)$ is a stable code for $\a$ if
\begin{enumerate}
\item $(\P, s_{m+1})\in \mathcal{I}_\k$,
\item $(p, \a)\in \mathcal{A}^\P$, $\xi^\P_{p, \a}=\card{\a}_{\leq^*}$, and whenever $\Q$ is a correct iterate of $\P$ such that $\Q\in \mathcal{G}_\k$,
\begin{center}
$\pi_{\P, \Q, s_{m+1}}(\card{\a}_{\leq^*})=\card{\a}_{\leq^*}$,
\end{center}
\item if $G\subseteq \mathbb{B}^\P$ is a generic object such that $x_G=\M|\eta_\P$ then $p\in G$.
\end{enumerate}
Notice that if $(\P, p)$ is a stable code for $\a$ then $\xi_{p, \a}^\P=\card{\a}_{\leq^*}$. This is because of condition 3, i.e., if $G\subseteq \mathbb{B}^\P$ is the generic so that $x_G=\M|\eta_\P$ then $\S(x_G)^{\M_n(\P)[G]}=\M$, $p\in G$ and $(\card{\a}_{\leq^*})^{\S(x_G)^{\M_n(\P)[G]}}=\card{\a}_{\leq^*}$.

We claim that for every $\a$ there is a stable code for $\a$. Let $\xi=\card{\a}_{\leq^*}$. To see this, suppose not. Let then $\P$ be such that $(\P, s_{m+1})\in \mathcal{I}_\k$, $\a<\l^\P$ and $\P$ is a correct iterate of $\W_n$. Then we can find $p\in \P$ such that $(p, \a)\in \mathcal{A}^\P$ and $(\P, p)$ satisfies 1 and 3 above. If it satisfies 2 then we are done, and therefore, we assume that $(\P, p)$ doesn't satisfy 2. Let then $(\P_0, p_0)=(\P, p)$ and let $\P_1$ witness the failure of 2. Thus, we have that $\xi=\xi^\P_{p, \a}$ and $i_{\P_0, \P_1, s_{m+1}}(\xi)>\xi$. But notice that there is $p_1\in \P_1$ such that $(p_1, \a)\in \mathcal{A}^{\P_1}$ and $\xi^{\P_1}_{p_1, \a}=\xi$. We then must have that $(\P_1, p_1)$ doesn't satisfy condition 2 above and therefore, we get $(\P_2, p_2)$ such that $\P_2\in \mathcal{G}_\k$ is a correct iterate of $\P_1$, $\pi_{\P_1, \P_2, s_{m+1}}(\xi)>\xi$ and $\xi^{\P_2}_{p_2, \a}=\xi$. In this fashion, by successively applying the failure of 2, we get a sequence $\la \P_i: i<\omega\ra$ such that for every $i$, $\P_{i}$ is a correct iterate of $\P_{i-1}$, for each $i$, $\P_{i}$ is a correct iterate of $\W_n$ and for $i\geq 0$,
\begin{center}
 $\pi_{\P_i, \P_{i+1}, s_{m+1}}(\xi)>\xi$.
\end{center}
Let then $\Q$ be the direct limit of $\la \P_i, i_{\P_i, \P_j} : i<j<\omega\ra$ and let $\sigma_i:\P_i \rightarrow \Q$ be the iteration embedding. Then because $\pi_{\P_i, \P_{i+1}, s_{m+1}}$'s agree with $i_{\P_i, \P_j}$, letting $\nu_i=\sigma_i(\xi)$ we get that $\la \nu_i : i<\omega\ra$ is a decreasing sequence of ordinals, contradiction! Thus, there is indeed a stable code for $\a$.

Now, for each $\a<\k$ choose $(\P_\a, p_\a)$ such that $(\P_\a, p_\a)$ is a stable code for $\a$. Let $\nu_\a=\card{(p, \a)}_{\leq^{\P_\a}}<\gg^{\P_a}_{m+1}$. Then we claim that for any $\a, \b<\k$, if $\a\leq^* \b$ then $R((\P_\a, \nu_\a), (\P_\b, \nu_\b))$. Indeed, let $\Q\in \mathcal{G}_\k$ be a common correct iterate of $\P_\a$ and $\P_\b$. Let $\nu=i_{\P_\a, \Q, s_{m+1}}(\nu_\a)$ and let $\zeta=i_{\P_\b, \Q, s_{m+1}}(\nu_\b)$. Let $\xi_\a=\card{\a}_{\leq^*}$ and $\xi_\b=\card{\b}_{\leq^*}$. We have that $i_{\P_\a, \Q, s_{m+1}}(\a)=\a$, $i_{\P_\b, \Q, s_{m+1}}(\b)=\b$,  $i_{\P_\a, \Q, s_{m+1}}(\xi_\a)=\xi_\a$ and $i_{\P_\b, \Q, s_{m+1}}(\xi_\b)=\xi_\b$. Because $\xi_\a\leq \xi_\b$, we have that
\begin{center}
$\card{(\pi_{\P_\a, \Q, s_{m+1}}(p_\a), \a)}_{\leq^\Q}\leq \card{(\pi_{\P_\b, \Q, s_{m+1}}(p_\b), \b)}_{\leq^\Q}$.
\end{center}
Therefore, $\nu\leq \xi$.

This shows that $\a \rightarrow (\P_\a, p_\a)$ is an order preserving map of $\leq^*$ into $R$ and hence,
\begin{center}
 $\card{\leq^*}\leq \card{R}=\gg_{\infty, s_{m+1}}<\d^{\M_\infty, \k}$.
\end{center}
\end{proof}

We finish by remarking that the directed limit of $\M$ at $\k$ is invariant under small forcing. This means that if $\mathbb{P}\in \M|\k$ and $g\subseteq \mathbb{P}$ is $\M$-generic then one can, working inside $\M[g]$, construct a directed system, much like we did above, and show that the direct limit of this system is the same as $\M_{\infty, \k}$. This mainly follows from Woodin's generic comparison process. The idea has been explained in various places and because of this we will omit it. The idea is as follows. It is enough to show it for $g$'s that are generic for $Coll(\omega, \eta^+)$ where $\eta<\k$ is a strong cutpoint. One then fixes a strong cutpoint $\nu<\k$ and performs a simultaneous comparison of all suitable pairs in $\M[g]|\nu$. It is then shown that the tree on $\W_n$ is in fact in $\M$. This follows from the homogeneity of the forcing. Let then $\P$ be the last of this comparison. We then get that $\P\in \M$ and it dominates all the suitable mice in $\M[g]|\nu$. This then easily implies that the directed system of $\M[g]$ is dominated by the one in $\M$, and hence, the direct limit of both systems must be the same. For more on the details of the generic comparison we refer the reader to \cite{CMI}, \cite{MSC} (Section 3.9) and  \cite{PFA}.

\subsection{The full directed system.}

In this subsection, we will establish some lemmas that connect the directed system associated with $\M_\omega$ with the directed system associated with $\M_{2k+1}$. In particular, we will prove \rthm{woodins thing}, originally due to Woodin, which has been widely known yet has remained unpublished for many years. We do not know if the proof of \rthm{woodins thing} presented here is the same or similar to Woodin's original proof. Woodin's result gives a characterization of $\k^1_{2k+1}$ in terms of cardinals of $\H$. We remind our readers that we assume that $\M_\omega^\#$ exists. This assumption is made for aesthetic reasons. Readers  familiar with the general theory can reduce the hypothesis to just $AD^{L(\mathbb{R})}$.

 In what follows, we will use superscript $f$ to indicate that we are dealing with the full directed system, i.e., with the system associated with $\M_\omega^\#$. Notice that because of \rthm{steel's thing}, for $\eta<(\d^2_1)^{L(\mathbb{R})}$, the notation $\H^{L(\mathbb{R})}|\eta$ makes sense.

Besides the proof of \rthm{woodins thing}, we will also prove \rlem{full limit is bounded} which we will use later on. When we talk about $\H$, we mean $\H^{L(\mathbb{R})}$. From now on until the end of the next subsection we fix $k\in \omega$. We will often omit superscripts or subscripts that usually would involve $k$ in them. By a standard Skolem hull argument done in $\H_z$, It follows from \rthm{steel's thing}, that there are many $\H$-cardinals $\nu$ such that $\M_{2k}(\H_z|\nu)\models ``\nu$ is Woodin". For each real $z$ let $\nu_z$ be the least such $\nu$.

Recall $\mathcal{F}$ of \rsec{dimt}. Next want to isolate a subset of $\mathcal{F}$ such that the direct limit of this subset will converge to $\M_{2k}(\H|\nu_0)|(\nu_0^{+\omega})^{\M_{2k}(\H|\nu_0)}$. For each real $z$, let $\eta_z$ be the least cardinal of $\M_\omega(z)$ such that $\M_{2k}(\M_\omega|\eta_z)\models ``\eta_z$ is Woodin". Then let $\W_z^f=\M_{2k}(\M_\omega(z)|\eta_z)|(\eta_z^{+\omega})^{\M_{2k}(\M_\omega(z)|\eta_z)}$. We let $\Sigma^f_z$ be the fragment of the $(\omega_1, \omega_1)$-strategy of $\M_\omega(z)$ that acts on stacks which are based on $\W_z^f$.  Let
\begin{center}
$\mathcal{F}^{+,f}_z=\{ \P: \P \in I(\W^f_z, \Sigma^f_z)$ as witnessed by a finite stack $\}$.
\end{center}
Whenever $\P, \Q\in \mathcal{F}^{+,f}_z$ and $\Q\in I(\P, (\Sigma^f_z)_\P)$, we will let $i_{\P, \Q}^f:\P\rightarrow \Q$ be the iteration embedding. Notice that in this notation we are omitting $z$ from subscripts and superscripts as it is usually clear what $z$ is. We hope this doesn't cause a confusion.

We can then define $\leq^f_z$ on $\mathcal{F}^{+, f}_z$ by $\P\leq^f_z\Q$ iff $\Q\in I(\P, (\Sigma^f_z)_\P)$. We let $\M_{\infty, z}^{+, f}$ be the direct limit of $(\mathcal{F}^{+,f}_z, \leq^{+, f}_z)$ under the iteration maps $i_{\P, \Q}^f$. We also let $i_{\P, \infty}^f:\P \rightarrow\M^{+, f}_{\infty, z}$ be the iteration map. Then clearly $\nu_z=\d^{\M_{\infty, z}^{+,f}}$.

Next we show that just like $\W_z$, $\mathcal{F}_z^{+, f}$ and $\leq^{+, f}_z$ can be internalized to $\M_{2k}(x)$ where $x$ codes $\W^f_z$. We first make the following definition.

\begin{definition}\label{miserable drop} Suppose $\P$ is suitable and $\T$ is a normal tree on $\P$. We say $\T$ has a \emph{miserable drop} if there is $\a<lh(\T)$ and ordinal $\eta$ such that if
\begin{center}
$\M=\cup \{\N: \M_\a^\T|\eta \trianglelefteq \N\trianglelefteq\M_\a^\T$ and $\eta$ is a strong cutpoint of $\N\}$
\end{center}
then the rest of $\T$ is a normal tree on $\M$ above $\eta$.
\end{definition}


\begin{lemma}\label{no misearable drops} Suppose $\Q, \R\in \mathcal{F}^{+,f}_z$. Let $\T$ on $\Q$ and $\U$ on $\R$ be the trees constructed via the comparison process in which $II$ uses $(\Sigma^f_z)_\Q$ on the $\Q$-side and $II$ uses $(\Sigma^f_z)_\R$ on the $\R$ side. Then $\T$ and $\U$ have no miserable drops.
\end{lemma}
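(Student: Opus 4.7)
The plan is to argue by contradiction and reduce to the Dodd--Jensen property of the canonical strategy of $\M_\omega(z)$. Suppose that $\T$ has a miserable drop at some stage $\alpha$ with witness ordinal $\eta$, and let $\M \trianglelefteq \M^\T_\alpha$ be as in \rdef{miserable drop}; the argument for $\U$ is symmetric. First I would rule out the possibility that $\M = \M^\T_\alpha$. Since $\M^\T_\alpha$ is a correct iterate of $\W^f_z$ via $(\Sigma^f_z)_\Q$, it has unique Woodin $\delta^{\M^\T_\alpha}$, and its ordinals go up only through $(\delta^{+\omega})^{\M^\T_\alpha}$. A strong cutpoint $\eta$ of $\M^\T_\alpha$ must therefore lie in the $\omega$-tower above $\delta^{\M^\T_\alpha}$ (otherwise some extender on the $\M^\T_\alpha$-sequence would overlap $\eta$, contradicting the strong cutpoint property together with Woodinness of $\delta^{\M^\T_\alpha}$). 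But then the rest of $\T$ on $\M^\T_\alpha$ above $\eta$ admits no nontrivial extender, so the ``drop'' is vacuous. Hence $\M \triangleleft \M^\T_\alpha$ strictly, and the miserable drop is a genuine drop on the main branch of $\T$ past stage $\alpha$.

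Next I would lift the comparison to the $\M_\omega(z)$ level. Because $\Q \in \mathcal{F}^{+,f}_z$, fix a stack $\vec{\mathcal{T}}_\Q$ from $\W^f_z$ to $\Q$ via $\Sigma^f_z$; by the very definition of $\Sigma^f_z$ this lifts, using the copying construction, to a $\Sigma_{\M_\omega(z)}$-stack on $\M_\omega(z)$ with a last model $\M^*_\Q$ satisfying $\M^*_\Q | (\eta_\Q^{+\omega})^{\M^*_\Q} = \Q$. Symmetrically for $\R$. Moreover, the trees $\T$ on $\Q$ and $\U$ on $\R$ copy up to normal trees $\T^*$ on $\M^*_\Q$ and $\U^*$ on $\M^*_\R$ via the same strategy; a drop on the main branch of $\T$ induces a drop on the main branch of $\T^*$ because the copy maps are elementary and the extenders applied above $\eta_\Q$ correspond one-to-one on both sides.

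Finally I would invoke Dodd--Jensen. Concatenating $\vec{\mathcal{T}}_\Q$ with $\T^*$, and concatenating $\vec{\mathcal{T}}_\R$ with $\U^*$, yields two stacks on $\M_\omega(z)$ via its canonical $(\omega_1,\omega_1)$-strategy that have a common last model. The Dodd--Jensen property guarantees that the iteration embedding along each of these composite stacks exists and is pointwise minimal; in particular neither main branch can drop. Since the miserable drop on the $\Q$-side produces a drop on the main branch of the composite $\vec{\mathcal{T}}_\Q^{\frown}\T^*$, this is a contradiction, completing the proof.

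The principal obstacle is the lifting step: one must verify carefully that a miserable drop in the short tree $\T$ really does translate into a main-branch drop in $\T^*$ on the full $\M_\omega(z)$-iterate. The subtlety is that the drop happens past a strong cutpoint $\eta < \delta^{\M^\T_\alpha}$ whose analogue $\eta^*$ in $\M^*_\Q$ need not be a strong cutpoint there; however, since all extenders on $\M^*_\Q$-sequence with index above $\eta_\Q$ are recorded on the $\Q$-side via $\S$-construction, the copied extenders used after stage $\alpha$ in $\T^*$ still act on the appropriate proper initial segment of $\M^{\T^*}_\alpha$, producing the required drop and so the needed contradiction.
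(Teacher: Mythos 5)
You take a genuinely different route from the paper, and the route has a real gap precisely at the step you flag as the obstacle. The paper argues entirely at the level of the suitable premice: a miserable drop in $\T$ implies $i^\T$ does not exist, so the comparison lemma gives $\R^*\vartriangleleft \Q^*$; then, because $\eta$ is a strong cutpoint of $\R^*$ and $\R^*$ (being a correct iterate of $\W^f_z$) is $S_{2k}$-full at its strong cutpoints, one gets $\M\trianglelefteq \R^*\vartriangleleft \Q^*$, which is impossible because $\Q^*$ is a nontrivial iterate of $\M$ above $\eta$. Your plan instead lifts to $\M_\omega(z)$ and invokes Dodd--Jensen, and this breaks at two points. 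First, Dodd--Jensen as used in this paper rules out a drop along the iteration from $\M_\omega(z)$ to $\N$ only when you already have a fine-structural embedding of $\M_\omega(z)$ into an initial segment of $\N$; you propose to get that embedding from $\vec{\mathcal{T}}_\R{}^\frown\U^*$, but for that you need the two concatenated stacks to land on a common last model. They will not: $\T$ and $\U$ comprise a coiteration only of the short suitable structures $\Q$ and $\R$ (which are passive above their $\omega$-towers), and viewing the same extenders as applied to the proper-class iterates $\M^*_\Q$ and $\M^*_\R$ does not give a coiteration of those — their extender sequences keep disagreeing above $(\delta^{+\omega})$ — so the lifted stacks do not converge. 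Second, even granting a common last model, the claim that a miserable drop in $\T$ transfers to a main-branch drop in $\T^*$ is not established. The drop to $\M\vartriangleleft\M^\T_\alpha$ depends on the strong-cutpoint configuration at $\eta$ \emph{inside the short model} $\M^\T_\alpha$; the corresponding ordinal in the full $\M_\omega(z)$-iterate need not be a strong cutpoint there (there may be overlapping extenders above $(\delta^{+\omega})$), and appealing to the $\mathcal{S}$-construction here is a non sequitur, since it translates mice over one set to mice over a related set rather than transferring drop structure of an iteration tree upward.

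Your preliminary step of ruling out $\M=\M^\T_\alpha$ is also not quite right: you assert that any strong cutpoint of $\M^\T_\alpha$ must lie in the $\omega$-tower above $\delta^{\M^\T_\alpha}$, but suitable premice have many strong cutpoints \emph{below} their Woodin cardinal (this is exactly the regime the paper cares about; its proof even produces $\eta\in\R^*$, an ordinal below $\delta^{\R^*}$). The conclusion — that the miserable drop is a proper drop — is what you want, but the justification doesn't hold. Overall, the paper's argument works because it never leaves the suitable level: the fullness of a correct iterate at its strong cutpoints substitutes for Dodd--Jensen, and no lifting to $\M_\omega(z)$ is required.
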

\begin{proof}\label{no miserable drops}
Suppose towards a contradiction, $\T$ has a miserable drop. Let $\Q^*$ be the last model of $\T$ and $\R^*$ be the last model of $\U$. Then $i^\T$ cannot exist and therefore, it follows from the comparison lemma that $\R^*\vartriangleleft \Q^*$. Let $\a<lh(\T)$ be the largest such that there is a miserable drop in $\M_\a^\T$. Let $\eta$ be such that if
\begin{center}
$\M=\cup \{\N: \M_\a^\T|\eta \trianglelefteq \N\trianglelefteq\M_\a^\T$ and $\N$ is a premouse over $\M_\a^\T|\eta\}$
\end{center}
then the rest of $\T$ is a tree on $\M$ above $\eta$. It then follows that $\eta\in \R^*$. Notice that $\eta$ is a strong cutpoint in $\R^*$ and by fullness of $\R^*$, $\M\trianglelefteq \R^*$. Because $\Q^*$ is an iterate of $\M$ above $\eta$, we cannot have that $\M\trianglelefteq \Q^*$, contradiction!
\end{proof}

Our next lemma shows that if $\P, \Q\in \mathcal{F}_z^{+,f}$, then their comparison involves $\Q$-structures that are below the $S_{2k}$-operator.

\begin{lemma}\label{bound on q-structures} Suppose $\P, \Q\in \mathcal{F}_z^{+,f}$. Let $\R$ be the result of their comparison and let $\T$ and $\U$ be the trees on $\P$ and $\Q$ respectively that come from the comparison process. Then for every limit $\a$ such that $\a+1\leq lh(\T)$, if $b$ is the branch of $\T\rest \a$ chosen in $\T$ and $\Q(b, \T\rest\a)$-exists then $\Q(b, \T\rest \a)\trianglelefteq \M_{2k}(\M(\T\rest \a))$.
\end{lemma}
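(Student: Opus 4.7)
The approach I would take is a comparison argument powered by a Woodin-cardinal count: I would show that $\Q(b,\T\rest\a)$ can contain at most one Woodin cardinal strictly above $\d(\T\rest\a)$, whereas $\M_{2k}(\M(\T\rest\a))$ has $2k$ such Woodins by construction, and this discrepancy forces $\Q(b,\T\rest\a)$ to be an initial segment of $\M_{2k}(\M(\T\rest\a))$ for $k \geq 1$.

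First I would identify $\Q := \Q(b,\T\rest\a)$ as a sound initial segment of the branch model $\M^\T_b$. Since $b$ is the branch chosen during the comparison $\T$ on $\P \in \mathcal{F}^{+,f}_z$, and since $\T\rest\a$ has no miserable drops by \rlem{no misearable drops}, the model $\M^\T_b$ is an iterate of $\P$ via (a fragment of) $(\Sigma^f_z)_\P$, with only a possible drop allowed by the standard framework. Because $\P$ has exactly one Woodin cardinal $\d^\P$, the branch model $\M^\T_b$ has at most one Woodin cardinal, namely $\d^{\M^\T_b} = i^\T_b(\d^\P) \geq \d(\T\rest\a)$. Furthermore, $\d(\T\rest\a)$ is a strong cutpoint of $\M^\T_b$ (and hence of $\Q$) above $\M(\T\rest\a)$, and by the defining property of $\Q$-structures, $\d(\T\rest\a)$ itself is not Woodin in $\Q$. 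Hence $\Q$ has at most one Woodin cardinal strictly above $\d(\T\rest\a)$.

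Second I would compare $\Q$ with $\M_{2k}(\M(\T\rest\a))$ above $\d(\T\rest\a)$. Iterability of $\Q$ above $\d(\T\rest\a)$ is inherited from $(\Sigma^f_z)_{\M^\T_b}$ restricted to trees living above the strong cutpoint $\d(\T\rest\a)$, while $\M_{2k}(\M(\T\rest\a))$ carries its canonical strategy, which lies in $L(\mathbb{R})$ by \rthm{mouse capturing}. Standard coiteration above $\d(\T\rest\a)$ terminates and yields either $\Q \trianglelefteq \M_{2k}(\M(\T\rest\a))$ (with no iteration on $\Q$'s side, since $\Q$ is sound) or that some iterate of $\M_{2k}(\M(\T\rest\a))$ is a proper initial segment of some iterate of $\Q$. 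Iteration above $\d(\T\rest\a)$ preserves the number of Woodin cardinals strictly above $\d(\T\rest\a)$, so the second alternative would force $\Q$ to contain at least $2k$ Woodins above $\d(\T\rest\a)$, contradicting the bound of one established in the previous step (for $k\geq 1$). Therefore $\Q \trianglelefteq \M_{2k}(\M(\T\rest\a))$.

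The principal obstacle is ensuring that the coiteration above $\d(\T\rest\a)$ terminates cleanly, which requires that both $(\Sigma^f_z)_{\M^\T_b}$ restricted to $\Q$ and the canonical strategy of $\M_{2k}(\M(\T\rest\a))$ are fullness-preserving in a compatible sense; this is what the preceding development of $\Sigma^f_z$ and the capturing theorems were set up to provide. The case $k = 0$ is not addressed by the Woodin count and would require a separate argument exploiting the $L$-like structure of $\W^f_z$ above $\eta_z$ (which in particular limits the extenders that can appear in $\Q$ above $\d(\T\rest\a)$).
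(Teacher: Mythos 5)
Your proposal misses the mechanism the paper actually uses and rests on a Woodin-count inference that is not sound. There are two concrete problems.

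First, the dichotomy you extract from the coiteration is not what comparison gives, and the Woodin count does not close it. Since $\Q := \Q(b,\T\rest\a)$ is a sound mouse over $\M(\T\rest\a)$ projecting to $\d(\T\rest\a)$, it cannot move in the coiteration; the coiteration produces an iterate $\M'$ of $\M_{2k}(\M(\T\rest\a))$ with $\Q\trianglelefteq\M'$. Your ``second alternative'' (an iterate of $\M_{2k}(\M(\T\rest\a))$ being a \emph{proper initial segment} of an iterate of $\Q$) cannot occur, since $\M_{2k}(\M(\T\rest\a))$ and its iterates are proper classes while $\Q$ is a set. So what is actually at issue is whether the iteration on the $\M_{2k}$ side is trivial, and a Woodin count gives you nothing here: even granting $\Q\trianglelefteq\M'$, Woodinness in an initial segment does not pass upward to the ambient model (more functions above can destroy it), so ``$\M'$ has $2k$ Woodins above $\d(\T\rest\a)$, hence so does $\Q$'' is a non sequitur. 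In fact the whole dilemma you set up — either the conclusion holds or a Woodin miscount follows — never engages the real danger, which is that the branch $b$ chosen by $\Sigma^f_z$ might have a $\Q$-structure reaching strictly \emph{past} $\M_{2k}(\M(\T\rest\a))$, i.e.\ $\M_{2k}(\M(\T\rest\a))\models``\d(\T\rest\a)$ is Woodin.''

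Second, and more fundamentally, you never use the fact that $\T$ is one side of the comparison of two suitable mice, which is exactly what the paper leans on. The paper argues by contradiction: from the assumed failure one gets $\M_{2k}(\M(\T\rest\a))\models``\d(\T\rest\a)$ is Woodin,'' then via the agreement of $\M^\T_\a$ with the comparison output $\R$ up to $lh(E^\T_\a)$, one sees $\d(\T\rest\a)$ is a cardinal of $\R$ with $\M_{2k}(\R|\d(\T\rest\a))\models``\d(\T\rest\a)$ is Woodin,'' forcing $\d(\T\rest\a)=\d^\R$ by the minimality built into suitability. Then the next extender $E^\T_\a$ is examined: if $\cp(E^\T_\a)>\d(\T\rest\a)$ one gets a miserable drop, contradicting \rlem{no misearable drops}; if $\cp(E^\T_\a)\leq\d(\T\rest\a)$, reflecting with $E^\T_\a$ produces a cardinal $\eta<\d^\R$ of $\R$ with $\M_{2k}(\R|\eta)\models``\eta$ is Woodin,'' again contradicting minimality of $\d^\R$. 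This interplay between $\R$, the extender $E^\T_\a$, and the ban on miserable drops is the content of the proof, and a self-contained coiteration of $\Q$ against $\M_{2k}(\M(\T\rest\a))$ simply does not see it. (Your remark that $k=0$ needs a separate argument is a further symptom: the paper's argument is uniform in $k$ and does not hinge on a Woodin count at all.)
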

\begin{proof}
The reason for this is that the only way to produce normal trees with $\Q$-structures that are beyond $S_{2k}$-operator is to do a miserable drop. To see that our claim is true, assume not, and fix $\a$ such that $\a+1\leq lh(\T)$ and if $b$ is the branch of $\T\rest \a$ chosen in $\T$ such that $\Q(b, \T\rest\a)$-exists then $\Q(b, \T\rest \a)\not \trianglelefteq \M_{2k}(\M(\T\rest \a))$. It then follows that $\M_{2k}(\M(\T\rest \a))\vartriangleleft \Q(b, \T\rest \a)$ and therefore, $\M_{2k}(\M(\T\rest \a))\models  ``\d(\T\rest \a)$ is Woodin". Notice that it follows from the comparison lemma and the minimality condition on $\P$ that $i^{\T\rest \a}_b$ exists (i.e., there are no drops along $b$). This means that $\a+1< lh(\T)$. But then $lh(E^\T_\a)>\d(\T\rest\a)$. Because $\R$ agrees with $\M^\T_\a$ up to $lh(E^\T_\a)$ and $\R\models ``lh(E^\T_\a)$ is a cardinal", $\d(\T\rest \a)$ is a cardinal in $\R$ and moreover, $\M_{2k}(\R|\d(\T\rest \a))\models ``\d(\T\rest \a)$ is Woodin". This means that $\d(\T\rest \a)=\d^\R$. 

Notice now that we must have that $\cp(E^\T_\a)\leq \d(\T\rest \a)$. To see this assume not. We then have that $\cp(E^\T_\a)>\d(\T\rest \a)$. But because $\d(\T\rest \a)=\d^\R$, we have that there must be a miserable drop in $\T$ at stage $\a+1$ (as we must start iterating above $\d(\T\rest \a)$). 

It now follows that $\M_\a^\T\models ``\cp(E^\T_\a)$ is a limit of cardinals $\eta$ such that $\M_{2k}(\M_\a^\T|\eta)\models ``\eta$ is Woodin". Because of the agreement between $\M_\a^\T$ and $\R$, we get that there is an $\R$-cardinal $\eta<\d^\R$ such that $\M_{2k}(\R|\eta)\models ``\eta$ is Woodin". This is a contradiction. 
\end{proof}

Using miserable drops, we can now define $s$-iterability for $\P\in \mathcal{F}^{+,f}_z$. First, given an iteration tree $\T$ on $\P$, we say $\T$ is \textit{correctly guided} if $\T$ doesn't have miserable drops and for every limit $\a<lh(\T)$, if $b$ is the branch of $\T\rest \a$ chosen by $\T$ and $\Q(b, \T\rest \a)$ exists then $\Q(b, \T\rest \a)\trianglelefteq \M_{2k}(\M(\T\rest \a))$. $\T$ is \textit{short} if there is a well-founded branch $b$ such that $\T^\frown \{\M^\T_b\}$ is correctly guided. $\T$ is \textit{maximal} if $\T$ is not short. One can then proceed and define $s$-iterability as in \rdef{s-iterability}: the only difference is that we require that the trees in the stack be without miserable drops. We define $T_{s, m}^\P$, $\gg_s^\P$ and $H_s^\P$ as before and we omit $z$ from superscripts and subscripts as that is really part of $\P$. Notice that
\begin{center}
$\sup_{m\in \omega}\gg_{s_m}^\P=\d^\P$.
\end{center}
For $\P, \Q\in \mathcal{F}^{+, f}_z$, we say $\Q$ is a \textit{correct iterate} of $\P$ if there is a correctly guided finite stack $\VT$ on $\P$ with last model $\Q$.

Suppose now $\P$ and $\Q$ are two correct iterates of $\W^f_z$. Then using the proof of \rlem{no miserable drops}, we can show that the comparison of $\P$ and $\Q$ can be entirely, except possibly the very last step, be carried out in $\M_{2k}(\P, \Q)$. That is, one can show that there are correctly guided trees $\T, \U\in \M_{2k}(\P, \Q)$ such that $\T$ is on $\P$, $\U$ is on $\Q$ and $\T$ and $\U$ have a common last model.

Using this observation and the results of \rsec{internalizing the directed system} one can internalize the directed system associated to $\W_z^f$. More precisely, suppose $x$ is a real coding $\W_z^f$ and $\k$ is an inaccessible strong cutpoint of $\M_{2k}(x)$ such that $\k$ is below the first Woodin of $\M_{2k}(x)$ and $\k$ is a limit of strong cutpoints, then one can form the direct limit of all correct iterates of $\W^f_z$ that are in $\M_{2k}(x)$. Notice that in \rsec{internalizing the directed system}, our internalization process didn't use $\W_z$ as a parameter in the definition. Here too we could make do without $\W_z^f$ but we don't it. Before we move on, let us then lay down the notation that is slowly evolving and becoming rather cumbersome.

\begin{enumerate}
\item We let $\mathcal{F}^{+,f}_z=\{ \P: \P$ is a correct iterate of $\W^f_z\}$, $\mathcal{J}^{+,f}_z=\{ (\P, \a) : \P\in \mathcal{F}^{+,f}_z \wedge \a<\d^\P\}$, and $\R_z^{+,f}$ is the prewellordering defined on $\mathcal{J}^{+,f}_z$ by:
    \begin{center}
    $(\P, \a)R^{+,f}_z (\Q, \b)$ iff $\Q$ is a correct iterate of $\P$ and $i^f_{\P, \Q}(\a)\leq \b$.
    \end{center}
We let $\leq^{+, f}_z$ be the prewellordering of $\mathcal{F}^{+, f}_z$ given by:
\begin{center}
    $\P\leq^{+,f}_z \Q$ iff $\Q$ is a correct iterate of $\P$.
    \end{center}

\item We let $\mathcal{I}^f_z=\{ (\P, s): \P\in \mathcal{F}^f_z \wedge s\in Ord^{<\omega}\wedge \P$ is strongly $s$-iterable $\}$, $\mathcal{F}^{f}_z=\{ H_s^\P : (\P, s)\in \mathcal{I}^f_z\}$ and $\mathcal{J}^{f}_{z, s}=\{ (\P, \a) : \P\in \mathcal{F}^{+,f}_z \wedge \a<\gg^\P_s\}$. We let $R^{f}_z$ be the prewellordering of $\mathcal{J}^{f}_z$ given by:
    \begin{center}
    $(\P, \a)R^{f}_{z, s}(\Q, \b)$ iff $\Q$ is a correct iterate of $\P$ and $\pi_{\P, \Q, s}(\a)\leq \b$.
    \end{center}
    We let $\leq^f_z$ be the prewellordering of $\mathcal{I}^f_z$ given by:
    \begin{center}
    $(\P, s)\leq^f_z(\Q, t)$ iff $\Q$ is a correct iterate of $\P$ and $s\subseteq t$. We have that $\leq^f_z$ is directed.
    \end{center}

\item Given $\P$ and $s\in Ord^{<\omega}$ such that $(\P,s)\in \mathcal{I}^f_z$, if $\Q$ is a correct iterate of $\P$ then we let $\pi^f_{\P, \Q, s}:H_s^\P\rightarrow H_s^\Q$ be the $s$-iterability embedding. $z$ will be clear from the context and hence, we omit it. Recall that we let $\pi_{\P, \Q, s}:H_s^\P\rightarrow H_s^\Q$ be the $s$-iterability embedding where $\P, \Q$ are suitable $\P$ is $s$-iterable and $\Q$ is a correct iterate of $\P$.

\item We let $\M_{\infty, z}^f$ be the direct limit of $(\mathcal{F}^f_z, \leq^f_z)$ under the maps $\pi^f_{\P, \Q, s}$ and $\M^{+, f}_{\infty, z}$ be the direct limit of $(\mathcal{F}^{+, f}_z, \leq^{+, f}_z)$ under the iteration maps $i^f_{\P, \Q}$. By the proof of \rlem{wellfoundness}, $\M^{+, f}_{\infty, z}=\M_{\infty, z}^f$.
\item We let $\pi_{\P, \infty, s}^f:H_s^\P\rightarrow_{\Sigma_1} \M_{\infty, z}^f$ and $\pi_{\P, \infty, s}:H_s^\P\rightarrow_{\Sigma_1} \M_{\infty, z}$ be the corresponding iteration embeddings.
\item Recall that $\d_{\infty, z}=\d^{\M_{\infty, z}}$. We also let $\d^f_{\infty, z}=\d^{\M_{\infty, z}^f}$. Thus, $\d^f_{\infty, z}=\nu_z$ (this follows from \rthm{steel's thing}).
\item We let $\gg^f_{\infty, s, z}=\sup \pi^f_{\P, \infty, s}"\gg_s^\P$ for some $\P$ such that $(\P, s)\in \mathcal{I}^f_z$. Recall that $\gg_{\infty, s, z}=\sup \pi_{\P, \infty, s}"\gg_s^\P$ for some $\P$ such that $(\P, s)\in \mathcal{I}_z$.

\item We let $\M_{\infty, \k, z, x}^f$ be the direct limit of $\W^f_z$ constructed inside $\M_{2k}(x)$ at $\k$. Here $x$ codes $\W^f_z$ and $\k$ is an inaccessible strong cutpoint of $\M_{2k}(x)$ which is less than the first Woodin of $\M_{2k}(x)$ and is a limit of strong cutpoints of $\M_{2k}(x)$.
\item We let $\M_{\infty, \k, z, x}$ be the direct limit of $\W_{2k+1,z}$ constructed inside $\M_{2k}(x)$. Here $x$ codes $\W_{2k+1, z}$ and $\k$ is an inaccessible strong cutpoint of $\M_{2k}(x)$ which is less than the first Woodin of $\M_{2k}(x)$ and is a limit of strong cutpoints of $\M_{2k}(x)$.
\item We let $\M_{\infty, z, x}^f=\M_{\infty, \k, z, x}^f$ and $\M_{\infty, z, x}=\M_{\infty, \k, z, x}$  where $\k$ is the least inaccessible of $\M_{2k}(x)$.

\item $\pi^f_{\P, \infty, s, x}:\P\rightarrow \M_{\infty, z, x}^f$ and  $\pi_{\P, \infty, s, x}:\P\rightarrow \M_{\infty,z, x}$ be the corresponding iteration embeddings.
\item If $a$ is a countable transitive set such that $\W_z\in a$ or $\W^f_z\in a$ then we let $\M_{\infty, \k, z, a}^f$, $\M_{\infty, \k, z, a}$, $\M_{\infty, z, a}^f$, $\M_{\infty, z, a}$, $\pi^f_{\P, \infty, s, a}$, and $\pi_{\P, \infty, s, a}$ be the corresponding objects.

\end{enumerate}


 Our first lemma is that $R_{z}$ dominates $R_{z}^f$.

\begin{lemma}\label{full limit is bounded} For every $z$ if $w$ is a real coding $\W_z^f$ then for every $m$, $\card{R^f_{z, s_m}}\leq \card{R_{w, s_m}}$.
\end{lemma}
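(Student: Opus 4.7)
My plan is to prove $\gg^f_{\infty, s_m, z} \leq \gg_{\infty, s_m, w}$, equivalently $\card{R^f_{z, s_m}} \leq \card{R_{w, s_m}}$, by constructing an order-preserving embedding of $R^f_{z, s_m}$ into $R_{w, s_m}$. The conceptual core is that since $w$ codes $\W^f_z$, the full directed system for $\W^f_z$ can be seen inside $\M_{2k+1}(w)$, where it can then be compared to the directed system for $\W_w$.

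First, I would adapt the internalization machinery of Subsection~\ref{internalizing the directed system} to form the internal directed system of $\W^f_z$ inside $\M_{2k+1}(w)$. This is possible because $\W^f_z \in \M_{2k+1}(w)$ (via the coding by $w$) and because $\M_{2k+1}(w)$ below its least Woodin is closed under the $S_{2k}$-operator, which is exactly what correctly identifies the $\Q$-structures for short trees on iterates of $\W^f_z$. Mimicking the proofs of \rlem{wellfoundness} and \rlem{internal wellfoundness} and using \rlem{uniqueness of s-iterability embeddings}, one shows that the external $R^f_{z, s_m}$ order-embeds into the corresponding internal prewellordering formed in $\M_{2k+1}(w)$ at an inaccessible strong cutpoint $\kappa$ which is a limit of strong cutpoints.

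Next, I would compare this internal $\W^f_z$-system with the $\W_w$-system inside $\M_{2k+1}(w)$. Given an internal correct iterate $\P$ of $\W^f_z$, I iterate $\W_w$ to a $\Q$ at which the relevant segment of $\P$'s base (namely $\M_\omega(z)|\eta_z$ as it sits inside $\P$) becomes generic over $\Q$ for an initial segment of the extender algebra of $\Q$. \rprop{s-constructions prop} then reorganizes $\P$ as an object computed from $\Q$ via an $S$-construction, which yields an order-preserving assignment $(\P, \a) \mapsto (\Q, \b)$ with $\b < \gg^\Q_{s_m}$ chosen to track $\a$ through the rearrangement, compatible with $s_m$-iterability embeddings by uniqueness. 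Finally, the small-forcing invariance remark at the end of Subsection~\ref{internalizing the directed system} identifies the internal $\W_w$-system with the external one, allowing the bound to transfer, whence $\gg^f_{\infty, s_m, z} \leq \gg_{\infty, s_m, w}$.

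The main obstacle will be the cross-system absorption in the second step. Since $\W^f_z$ and $\W_w$ sit over different bases (namely $\M_\omega(z)|\eta_z$ and the real $w$), direct comparison between their iterates is unavailable. The $S$-construction provides the bridge, but one must carefully track the $s_m$-iterability data across the rearrangement: the image $\pi_{\Q, \infty, s_m}(\b)$ must dominate $\pi^f_{\P, \infty, s_m}(\a)$ coherently as $(\P,\a)$ varies, and this coherence rests on repeatedly applying \rlem{uniqueness of s-iterability embeddings} together with the Dodd-Jensen property of the canonical background strategies.
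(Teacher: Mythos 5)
Your overall plan---internalize the directed system and compare via $\S$-constructions---is pointed in roughly the right direction, but the concrete mechanism is off in ways that keep the argument from closing.  First, the comparison cannot live inside $\M_{2k+1}(w)$: the mouse $\W_w$ is (up to coding) a cut of $\M_{2k+1}(w)$ itself, so $\M_{2k+1}(w)$ does not contain iterates of $\W_w$ and cannot form the $\W_w$-directed system at all.  The paper instead works inside $\M_{2k}(\P)$ for each $2k$-suitable $w$-mouse $\P$; after Levy-collapsing $\d^\P$ this becomes $\M_{2k}(x_g)$ for a real $x_g$ coding $\P|\d^\P$, and \emph{there} the internalized full $\W^f_z$-system can be formed (this is the model your step~1 should target, not $\M_{2k+1}(w)$).

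Second and more seriously, you make the wrong object generic, and you are missing the notion of a stable code, which is the technical heart of the paper's proof.  You propose to make the base $\M_\omega(z)|\eta_z$ of $\P$ generic over an iterate $\Q$ of $\W_w$.  But that base is a fixed countable set, the same for every correct iterate $\P$, so $\Q$ carries no information about $\P$ or about $\a$ and there is no way to extract an ordinal $\b$ that tracks $\a$.  The paper instead makes the \emph{entire} pair $(\Q,\a)$ (where $\Q$ is the iterate of $\W^f_z$---your $\P$) generic over the $2k$-suitable $w$-mouse $\P$ via the extender algebra $\mathbb{B}^\P$.  Conditions $p\in\mathbb{B}^\P$ then decide the value $\pi^f_{\dot\Q,\infty,\check s_m,x_g}(\check\a)$ as computed in the internalized full system, a maximal antichain $\mathcal{A}^\P$ of such conditions can be taken in $H^\P_{s_m}$, and ordering it by the forced values gives $\leq^\P$ of order type $<\gg^\P_{s_m}$.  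The remaining essential idea is that every $(\Q,\a)$ admits a \emph{stable} pair $(\P,p)$: one whose rank in $\leq^\P$ is preserved under the maps $\pi_{\P,\R,s_{m+1}}$ for every further correct iterate $\R$.  Existence of stable codes is proved by a Dodd-Jensen-plus-descending-ordinal argument, and stability is exactly what makes the assignment $(\Q,\a)\mapsto(\P,\card{p}_{\leq^\P})$ well-defined and order-preserving into $R_{w,s_m}$.  Without it the assignment you describe depends uncontrollably on the choice of $\Q$ and of the iteration, and neither \rlem{uniqueness of s-iterability embeddings} nor small-forcing invariance by themselves repair that.
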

\begin{proof}
Fix $z$, $w$ and $m$ as in the hypothesis.
 We now construct an order preserving embedding $f:\card{R^f_{z, s_m}}\rightarrow \card{R_{z, s_m}}$.

Suppose $\P$ is such that $(\P, s_m)\in\mathcal{I}_{w}$. By iterating if necessary, we get that there are conditions in the extender algebra of $\P$ that force that the generic object is a pair $(\Q, \a)\in \mathcal{J}^f_{z, s_m}$. The formula expressing this has $\W_z^f$ as a parameter and essentially says that $\Q$ is a correct iterate of $\W_z^f$ and $\a<\gg_m^\Q$. Because if $G\subseteq Coll(\omega, \d^\P)$ is $\M_{2k}(\P)$-generic and $x_g\in \M_{2k}(\P)[g]$ is the real coding $\P|\d^\P$ then we can form $\M_{\infty, z, x_g}^f$ \footnote{Notice that one can show via $\S$-constructions that $\M_{2k}(\P)[g]=\M_{2k}(x)$.}, there are conditions $p$ in the extender algebra of $\P$ that decide values for $\pi^f_{\dot{\Q}, \infty, \check{s}_m, x_g}(\check{\a})$ where $(\dot{\Q}, \a)$ is the generic object containing $p$. Notice that the value of $\pi^f_{\dot{\Q}, \infty, \check{s}_m, x_g}(\check{\a})$ is independent of $g$. We then let $\mathcal{A}^\P$ be a maximal antichain of conditions $p$ such that
\begin{enumerate}
\item $p$ forces that the generic object is a pair $(\Q, \a)\in \mathcal{I}^f_{z, s_m}$,
\item for some $\b$, $\M_{2k}(\P)\models ``p\forces_{Coll(\omega, \d^\P)} \pi^f_{\dot{\Q}, \infty, \check{s}_m, x_g}(\check{\a})=\check{\b}"$.
\end{enumerate}
Notice that we can assume that $\mathcal{A}^\P\in H_{s_m}^\P$. For each $p\in \mathcal{A}^\P$ let $\b_p$ be the witness for 2. We can then define $\leq^\P$ on $\mathcal{A}^\P$ by: $p\leq^\P q \iff \b_p\leq \b_q$. Notice that $\card{\leq^\P}<\gg_{s_m}^\P$. We have that $p\leq^\P q$ iff $\M_{2k}(\P)\models (p, q) \forces ``$ if $\dot{G}=((\dot{\Q}, \check{\a}), (\dot{R}, \check{\b}))$ then $(\dot{Q}, \check{\a})R^f_{\check{z}, \check{s}_m} (\dot{\R}, \check{\b})"$.

Fix now $(\Q, \a)\in \mathcal{I}^f_{z, s_m}$. We say $(\P, p)$ is $(\Q, \a)$-stable if
\begin{enumerate}
\item $(\Q, \a)$ is generic for the extender algebra of $\P$ and $p\in G$ where $G\subseteq \mathbb{B}^\P$ is the generic object such that $x_G=(\Q, \a)$,
\item $p\in \mathcal{A}^\P$ and $\b_p=\pi^f_{\Q, \infty, s_m, \P[\Q]}(\a)$,
\item whenever $(\R, q)$ is such that $\R$ is a correct iterate of $\P$ such that $(\Q, \a)$ is generic over $\R$ for the extender algebra at $\d^\R$ and letting $G\subseteq \mathbb{B}^\R$ be the generic such that $x_G=(\Q, \a)$,  $q\in \mathcal{A}^\R\cap G$,
    \begin{center}
    $\b_q=\pi_{\P, \R, s_m}(\b_p)$.
    \end{center}
    Thus, $q=_{\leq^\R}\pi_{\P, \R, s_m}(p)$.
\end{enumerate}

We claim that for every $(\Q, \a)\in \mathcal{I}^f_{z, s_m}$ there is a $(\Q, \a)$-stable $(\P, p)$. To see this assume not and fix $(\Q, \a)\in \mathcal{I}^f_{z, s_m}$ such that there is no $(\Q, \a)$-stable pair $(\P, p)$. Let $\P_0$ be such that $(\Q, \a)$ is generic for the extender algebra of $\P_0$. Letting $G\subseteq \mathbb{B}^\P$ be the generic object such that $x_G=(\Q, \a)$, we have a unique condition $p_0\in \mathcal{A}^\P\cap G$. Because $(\P_0, p_0)$ isn't $(\Q, \a)$-stable, there is $\P_1$ which is a correct iterate of $\P_0$ and is such that $(\Q, \a)$ is generic over $\P_1$ for the extender algebra at $\d^{\P_1}$ and if $p_1\in \mathcal{A}^{\P_1}\cap H$ where $H\subseteq \mathbb{B}^{\P_1}$ is the $\P_1$-generic such that $x_H=(\Q, \a)$ then
\begin{center}
$\b_{p_1}\not=\pi_{\P, \R, s_m}(\b_{p_0})$
\end{center}
Let
\begin{center}
$i=_{def}i_{\M_{2k}(\P_0), \M_{2k}(\P_1)}\rest \M^f_{\infty, z, \P_0}:\M^f_{\infty, z, \P_0} \rightarrow \M_{\infty, z, \P_1}^f$.
\end{center}
Then by Dodd-Jensen we have that
\begin{center}
$i(\pi^{f}_{\Q,\infty, s_m, \P_0}(\a))\geq \pi^f_{\Q,\infty, s_m, \P_1}(\a)$,
\end{center}
implying that
\begin{center}
$i(\b_{p_0})\geq \b_{p_1}$.
\end{center}
But because $i(\b_{p_0})=\pi_{\P, \R, s_m}(\b_{p_0})$ and $\b_{p_1}\not= \pi_{\P, \R, s_m}(\b_{p_0})$, we get that
\begin{center}
$\b_{p_1}<i(\b_{p_0})$.
\end{center}
Continuing this construction we get $\la \P_k, p_k: k<\omega\ra$ such that $\P_0$ is a correct iterate of $\W_w$, $\P_{k+1}$ is a correct iterate of $\P_k$ and $\b_{p_{k+1}}< i_{\P_k, \P_{k+1}}(\b_{p_k})$. Let then $\P$ be the direct limit of $\P_k$'s under the embeddings $i_{\P_k, \P_{k+1}}$ and let $\sigma_k:\P_k\rightarrow \P$ be the direct limit embedding. Then letting $\xi_k=\sigma_k(\b_{p_k})$, we get that $\la \xi_k : k\in \omega\ra$ is a descending sequence of ordinals, contradiction.

For each $(\Q, \a)\in \mathcal{I}^f_{z, s_m}$ let $A_{\Q, \a}=\{ (\P, p): (\P, p)$ is $(\Q, \a)$-stable $\}$. Let $B_{\Q, \a}=\{ (\P, \xi ): \exists p ( (\P, p)\in A_{\Q, \a} \wedge \card{p}_{\mathcal{A}^\P}=\xi)\}$. Then notice that if $(\P_i, \xi_i)\in B_{\Q_i, \a_i}$ for $i=0,1$ then
\begin{center}
$(\Q_0, \a_0)R^f_{z, s_m} (\Q_1, \a_1)\iff (\P_0, \xi_0)R_{w, s_m} (\P_1, \xi_1)$
\end{center}
To see this, let $\P$ be a common correct iterate of $\P_0$ and $\P_1$ such that $(\Q_0, \a_0)$ and $(\Q_1, \a_1)$ are generic for the extender algebra of $\P$. Then let $G_i\subseteq \mathbb{B}^{\P}$ be the $\P$-generic such that $x_{G_i}=(\Q_i, \a_i)$ ($i=0, 1$). Let $p_i\in \mathcal{A}^{\P}\cap G_i$. Suppose now $i_{\P_0, \P}(p_0)\leq^\P i_{\P_1, \P}(p_1)$. Because of stability we have that
\begin{center}
$i_{\P_k, \P}(\b_{p_k})=\pi_{\Q_k, \infty, s_m, \P}(\a_\k)\ \ \ k=0,1$.
\end{center}
Because $i_{\P_k, \P}(\b_{p_k})=\b_{i_{\P_k, \P}(p_k)}$ ($k=0,1$) and $i_{\P_0, \P}(p_0)\leq^\P i_{\P_1, \P}(p_1)$, we get that
\begin{center}
$\pi_{\Q_0, \infty, s_m, \P}(\a_0)\leq \pi_{\Q_1, \infty, s_m, \P}(\a_1)$.
\end{center}
This then implies that
\begin{center}
$\M_{2k}(\P)[(\Q_0, \a_0), (\Q_1, \a_1)]\models ``(\Q_0, \a_0)R_{z, s_m}^f (\Q_1, \a_1)$".
\end{center}
Hence, $(\Q_0, \a_0)R_{z, s_m}^f (\Q_1, \a_1)$. The other direction is similar.

Let then $f: \card{R^f_{z, s_m}}\rightarrow \card{R_{w, s_m}}$ be given by $f(\nu)=\eta$ if whenever $(\Q, \a)\in \mathcal{I}^f_{z, s_m}$ is such that $\card{(\Q, \a)}_{R^f_{z, s_m}}=\nu$ then for any $(\P, \b)\in B_{\Q, \a}$, $\card{(\P, \b)}_{R_{w, s_m}}=\eta$. The proof just used can be easily modified to show that $f$ is order preserving and hence, $\card{R^f_{z, s_m}}\leq \card{R_{w, s_m}}$.
\end{proof}

The proof of \rlem{full limit is bounded} can be used to prove the following.

\begin{corollary}\label{corollary to boundness} For any $m\in \omega$ and $z, w\in \mathbb{R}$, if $z\leq_T w$ then $\card{R_{z, s_m}}\leq \card{R_{w, s_m}}$ and $\card{R}_{z, s_m}\leq \card{R^f_{z, s_m}}$.
\end{corollary}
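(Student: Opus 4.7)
The plan is to adapt the stable-code construction from the proof of \rlem{full limit is bounded} essentially verbatim to both inequalities. In each case we build an order-preserving map between the relevant prewellorderings by coding pairs as generics over correct iterates and invoking stability.

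For $\card{R_{z, s_m}}\leq\card{R_{w, s_m}}$ when $z\leq_T w$, we mimic the Lemma 5.4 construction but with $\mathcal{I}_w$ playing the role that $\mathcal{I}_w$ played there and $\mathcal{I}_z$ playing the role that $\mathcal{I}^f_z$ played there. Given $(\Q,\a)\in\mathcal{J}_{n, s_m, z}$ and a $\P$ with $(\P, s_m)\in\mathcal{I}_w$, iterate $\P$ to make $(\Q,\a)$ generic for $\mathbb{B}^\P$. The assumption $z\leq_T w$ is exactly what is needed to ensure that when $g\subseteq Coll(\omega,\d^\P)$ is $\M_{2k}(\P)$-generic, the real $x_g$ computes $\W_z$ and $(\Q,\a)$, so $\M_{\infty, z, x_g}$ can be formed inside $\M_{2k}(\P)[g]$. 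This allows us to define maximal antichains $\mathcal{A}^\P\in H_{s_m}^\P$ of conditions deciding values of $\pi_{\dot{\Q},\infty,\check{s}_m,\dot{x}_g}(\check{\a})$, and to define $(\Q,\a)$-stable pairs $(\P, p)$ verbatim as in \rlem{full limit is bounded}. The order-preserving map sends the $R_{z, s_m}$-rank of $(\Q,\a)$ to the $\leq^\P$-rank of any stable $p$; preservation of order is a direct computation in a common correct iterate of any two witnessing $\P$'s.

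For $\card{R_{z, s_m}}\leq\card{R^f_{z, s_m}}$, the same procedure applies with $\mathcal{I}^f_z$ replacing $\mathcal{I}_w$. Given $(\Q,\a)\in\mathcal{J}_{n, s_m, z}$, iterate a correct iterate $\P$ of $\W^f_z$ so that $(\Q,\a)$ becomes $\mathbb{B}^\P$-generic; inside $\M_{2k}(\P)[g]$ we can again form $\M_{\infty, z, x_g}$ since the full-system iterate $\P$ carries at least as much structure as is needed to compute the partial system over $z$. The stable pair construction then yields the embedding and effectively reverses \rlem{full limit is bounded} at the level of $z$ itself.

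The only nontrivial step in either case will be verifying existence of stable codes. If no $(\Q,\a)$-stable pair existed, iterating successively to witness failure of stability would produce $\la\P_k,p_k : k<\omega\ra$ with each $\P_{k+1}$ a correct iterate of $\P_k$ and $\b_{p_{k+1}} < i_{\P_k,\P_{k+1}}(\b_{p_k})$, giving, under the direct-limit maps $\sigma_k$, an infinite $<$-decreasing sequence $\la\sigma_k(\b_{p_k}):k<\omega\ra$ of ordinals in the direct limit — the same Dodd--Jensen contradiction as in \rlem{full limit is bounded}. I do not anticipate any additional obstacles beyond this bookkeeping.
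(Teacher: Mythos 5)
Your overall plan is the right one: the paper's own treatment of this corollary is the one-line remark that the proof of Lemma~\ref{full limit is bounded} can be used, and you are indeed transporting the stable-code/Dodd--Jensen machinery from that proof. The second inequality, $\card{R_{z,s_m}}\leq\card{R^f_{z,s_m}}$, works for the reason you indicate: a correct iterate $\P$ of $\W^f_z$ contains $\M_\omega(z)|\eta_z$ in its bottom, which in turn contains $\M_{2k+1}^\#(z)$, so a real $x_g$ coding $\P|\d^\P$ really does compute $\W_z$, and the $z$-directed system can be internalized in $\M_{2k}(x_g)=\M_{2k}(\P)[g]$ exactly as in Lemma~\ref{full limit is bounded}.

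There is, however, a gap in your treatment of the first inequality. You assert that ``$z\leq_T w$ is exactly what is needed to ensure that\dots the real $x_g$ computes $\W_z$.'' That does not follow. Here $x_g$ codes $\P|\d^\P$ with $\P$ an iterate of $\W_w=\M_{2k+1}(w)|(\d^{+\omega})$; the reals of $\M_{2k}(\P)[g]=\M_{2k}(x_g)$ are exactly $C_{2k+2}(x_g)$, and $\M_{2k+1}(w)$ below its least Woodin is closed only under the $\M_{2k}^\#$-operator. In general $\M_{2k+1}^\#(z)\notin C_{2k+2}(x_g)$ even when $z\leq_T w$, so $\W_z$ is not a set of $\M_{2k}(x_g)$ and $\M_{\infty,z,x_g}$ --- which by the paper's item (9) is defined with $\W_z$ as a parameter, with $x_g$ required to code it --- is not literally available. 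Contrast this with the second inequality, and with Lemma~\ref{full limit is bounded} itself, where the coding hypothesis on the outer real gives the needed parameter for free. To repair the argument you need to invoke the parenthetical remark the paper makes but does not carry out (``we could make do without $\W^f_z$''): show that inside $\M_{2k}(x_g)$ one can form the direct limit of strongly $s_m$-iterable suitable-over-$z$ premice using only the generic $\Q$ and its correct iterates (all available in $\M_{2k}(x_g)$, with $T^\Q_{s_m,i}$ recovered via $\S$-constructions giving $\S^{\M_{2k}(x_g)}(\Q)=\M_n(\Q)$), and then verify that the resulting antichain $\mathcal{A}^\P$ still lies in $H^\P_{s_m}$ and computes ordinal values compatible with the external $\pi_{\Q,\infty,s_m}$. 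Without this, the definition of $\mathcal{A}^\P$ and the verification that the assignment is order-preserving do not go through for the first inequality, whereas the rest of your proposal (existence of stable codes via Dodd--Jensen, order-preservation by passing to a common iterate) is fine once that point is settled.
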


Next, we prove Woodin's result. The proof presented here is due to the author. We are grateful to Woodin for letting us state and proof this very useful lemma.

\begin{theorem}[Woodin]\label{woodins thing} Assume $AD+V=L(\mathbb{R})$. For $k\in \omega$, $\k^1_{2k+3}$ is the least cardinal $\d$ of $\H$ such that \begin{center}
$\M_{2k}(\H|\d) \models ``\d$ is Woodin".
\end{center}
\end{theorem}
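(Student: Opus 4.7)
The strategy is to identify $\kappa^1_{2k+3}$ with $\nu_0:=\d^f_{\infty,0}$, the least Woodin cardinal of the full directed limit $\M^{+,f}_{\infty,0}$. By the setup of this subsection and \rthm{steel's thing}, $\M^{+,f}_{\infty,0}$ embeds via $i^f_{\W^f_0,\infty}$ onto an initial segment of $V^{\H}_{\d(\utilde{\Sigma}^2_1)}$, and so $\nu_0$ coincides with the least $\H$-cardinal $\d$ for which $\M_{2k}(\H|\d)\models \d$ is Woodin. Thus the theorem reduces to proving $\kappa^1_{2k+3}=\nu_0$, which I split into two inequalities.

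For the upper bound $\nu_0\leq\kappa^1_{2k+3}$, I compute $\nu_0=\sup_m |R^f_{0,s_m}|$. By \rlem{full limit is bounded}, $|R^f_{0,s_m}|\leq |R_{w,s_m}|$ for any real $w$ coding $\W^f_0$, and $R_{w,s_m}$ is contained in the prewellordering $R^+_{2k}$ relativized to $w$. The complexity bound from the lemma following the definition of $R^+_n$, which shows $\mathcal{J}^+_{2k}(w)$ is $\Sigma^1_{2k+3}$ in a code of $\W_{2k}(w)$ using the $\Pi^1_{2k+2}$-definability of $x\mapsto \M_{2k}^\#(x)$, gives $|R^+_{2k}(w)|<\d^1_{2k+3}=(\kappa^1_{2k+3})^+$. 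Hence $|R^f_{0,s_m}|\leq\kappa^1_{2k+3}$ for every $m$, and taking the sup yields $\nu_0\leq\kappa^1_{2k+3}$.

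For the lower bound $\kappa^1_{2k+3}\leq\nu_0$, the plan is to embed every $\utilde{\Pi}^1_{2k+3}$-prewellordering of length below $\kappa^1_{2k+3}$ order-preservingly into the ordinals below $\nu_0$. Given a $\utilde{\Pi}^1_{2k+3}(z)$-prewellordering $\leq^*$, the Kechris--Martin--Solovay $Q$-theory together with $Q_{2k+3}(z)=\mathbb{R}^{\M_{2k+1}(z)}$ from \rsec{dimt} represent ordinal ranks in $\leq^*$ as values of $\Sigma_1$-definable functions over $(2k+1)$-suitable iterates of $\W_z$ from parameters below their Woodin cardinal. Working inside $\M_{2k}(x)$ for $x$ coding $\W^f_z$, the small-forcing invariance of the internal directed limit $\M^f_{\infty,\kappa,z,x}$ remarked at the end of \rsec{internalizing the directed system} makes this representation uniform across the directed system, producing an order-preserving embedding of $\leq^*$ into the ordinals below $\d^{\M^f_{\infty,\kappa,z,x}}$. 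A uniform-indiscernibility transfer then bounds this by $\nu_0$, and taking the sup over $\leq^*$ gives $\kappa^1_{2k+3}\leq\nu_0$.

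The principal obstacle is the lower bound. Whereas the upper bound is essentially a bookkeeping computation combining \rlem{full limit is bounded} with the complexity of suitability, the lower bound requires a genuine reflection argument connecting the external descriptive-set-theoretic $\utilde{\Pi}^1_{2k+3}$-prewellorderings to the internal directed-system structure. The critical tools will be Neeman's theorem realizing reals as $\M_{2k}$-generics, which is implicit in \rsec{internalizing the directed system}, and the $\S$-construction machinery of \rprop{s-constructions prop}, which together allow translation of $\utilde{\Pi}^1_{2k+3}$-definability into $\Sigma_1$-definability over suitable mice and hence into ordinals of the directed system below $\nu_0$.
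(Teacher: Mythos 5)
There are genuine gaps in both directions, and your proposed arguments diverge from the paper's at precisely the points where care is required.

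For the upper bound $\nu_0\leq\kappa^1_{2k+3}$: you infer from $\card{R^+_{2k}(w)}<\delta^1_{2k+3}=(\kappa^1_{2k+3})^+$ that $\card{R^f_{0,s_m}}\leq\kappa^1_{2k+3}$. This inference is invalid. An ordinal strictly below $(\kappa^1_{2k+3})^+$ need not be $\leq\kappa^1_{2k+3}$: the interval $(\kappa^1_{2k+3},(\kappa^1_{2k+3})^+)$ is nonempty, and the ordinals $\gamma^f_{\infty,s_m,0}$ and $\nu_0=\delta^f_{\infty,0}$ are cardinals of $\H$ but are not a priori cardinals of $V=L(\mathbb{R})$. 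The paper itself records $\card{R^{+,f}_z}<\delta^1_{2k+3}$ before the proof and then explicitly states that $\delta^f_{\infty,z}\leq\kappa^1_{2k+3}$ still requires an argument. That argument is substantive: assuming $\delta^f_{\infty,z}>\kappa^1_{2k+3}$, one takes a tree $T$ on $\kappa^1_{2k+3}$ projecting to the $\Pi^1_{2k+2}$ set of codes of $\Pi^1_{2k+2}$-iterable premice with $2k+1$ Woodins and a last extender, uses \rthm{steel's thing} to pull a preimage $S$ of $T$ back into a suitable $\P$, and derives a contradiction by showing $\M_{2k+1}(u)$ would then contain a real coding such a premouse over $u$, impossible since $Q_{2k+3}(u)=\mathbb{R}^{\M_{2k+1}(u)}$. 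Nothing in your sketch replaces this.

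For the lower bound $\kappa^1_{2k+3}\leq\nu_0$: your plan to embed $\utilde{\Pi}^1_{2k+3}$-prewellorderings of length below $\kappa^1_{2k+3}$ into the directed system is aimed at the wrong characterization of $\kappa^1_{2k+3}$ and is too vague to assess; embedding such prewellorderings does not, by itself, produce a lower bound of $\kappa^1_{2k+3}$ without an additional argument that their lengths are cofinal in $\kappa^1_{2k+3}$, and the phrase ``uniform-indiscernibility transfer'' is not an argument. What is needed, and what the paper does, is to show that the universal $\Pi^1_{2k+2}$ set is $\delta_{\infty,0}$-Suslin; since $\kappa^1_{2k+3}$ is the least $\kappa$ with $\utilde{\Pi}^1_{2k+2}\subseteq S(\kappa)$, this yields $\delta_{\infty,0}\geq\kappa^1_{2k+3}$ directly, and then \rlem{full limit is bounded} together with \rcor{corollary to boundness} transfers this to $\delta^f_{\infty,z}$. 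The paper builds the witnessing tree explicitly from $\Q=\M^\#_{2k}(\M_{\infty,0}|\delta_{\infty,0})$, searching for embeddings $\pi:\M_z\to\Q$ together with generics for the extender algebra at the least Woodin of $\M_z$, and uses the $\Pi^1_{2k+2}$-correctness of $\M_{2k}(z)$ to verify the projection. Your sketch contains no analogue of this construction.
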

\begin{proof}
Again, we prove the theorem from the assumption that $\M_\omega^{\#}$ exists. However, readers familiar with the general theory surrounding this topic can reduce the hypothesis to just $AD^{L(\mathbb{R})}$. It easily follows from \rlem{no miserable drops} and the remarks following it that for each $z\in \mathbb{R}$, $\card{R^{+,f}_z} < \d^1_{2k+3}$.  To finish the proof of \rthm{woodins thing}, we need then to show that for all reals $z$, $\d^f_{\infty, z}\leq \k^1_{2k+3}$ and that $\d^f_{\infty, z}\geq \k^1_{2k+3}$. We start with the first.

Suppose that for some $z$, $\d^f_{\infty, z} > \d^1_{2k+3}$. Let $U\subseteq \mathbb{R}$ be the set
\begin{center}
$\{ (x, y) : y$ codes $\Pi^1_{2k+2}$-iterable premouse $\M$ over $x$ such that $\M$ has $2k+1$ Woodins, proper initial segments of $\M$ are 2k+1-small and $\M$ has a last extender$\}$.
\end{center}
Then $U$ is $\Pi^1_{2k+2}$ and we can let $T\subseteq \omega^{<\omega}\times \omega^{<\omega}\times (\k^1_{2k+3})^{<\omega}$ be a tree such that $p[T]=U$. It follows by \rthm{steel's thing} that for every $w$
\begin{center}
 $\M^f_{\infty, w}|\d^f_{\infty, w}=\H|\d^f_{\infty, w}$.
\end{center}
Therefore, there is $w$ which codes $\W^f_z$ and is such that $T \in \M^f_{\infty, w}|\eta$ for some $\eta<\d^f_{\infty, w}$ (because we are assuming that $\d^f_{\infty, z}>\k^1_{2k+3}$ and by \rlem{full limit is bounded}, we have that $\d^f_{\infty, z}\leq\d^f_{\infty, w}$). Let then $\P\in \mathcal{F}^f_w$ be such that there is $S \in \P|\d^\P$ such that $i^f_{\P, \infty}(S)=T$. We can fix $l$ such that $S \in H_l^\P$. Let $u$ be a real coding $(\W^f_w, \P)$. Let $S^*=\pi^f_{\P, \infty, s_l, \W_u}(S)$. We claim that $\M_{2k+1}(u)\models ``p[S^*_u]\not =\emptyset"$.

To see that $\M_{2k+1}(u)\models ``p[S^*_u]\not =\emptyset"$, fix a correct iterate $\R$ of $\P$ such that for some $y$ there is $h\in (\gg_l^\R)^\omega$ such that if $g=\pi^f_{\R, \infty, s_l}"h$ then $(u, y, g)\in [T]$. Notice that $\M_{2k+1}(u)=\M_{2k}(\W_u)$. Iterate $\W_u$ to make $(\R, y)$ generic. Let $\Q$ be this iterate. Let $\bar{g}=\pi^f_{\R, \infty, s, \Q[\R, y]}" h$. Then for every $k$, we must have that
\begin{center}
$(y\rest k, \bar{g}\rest k)\in (\pi^f_{\W_w^f, \infty, s_l, \Q}(S))_u=(\pi^f_{\R, \infty, s_l, \Q[\R, y]}(S))_u$.
\end{center}
This means that $[(\pi^f_{\W_w^f, \infty, s_l, \Q}(S))_u]\not =\emptyset$. By absoluteness we have that
\begin{center}
  $\M_{2k}(\Q)\models [(\pi^f_{\W_w^f, \infty, s_l, \Q}(S))_u]\not =\emptyset$.
\end{center}
It then follows by elementarity that
\begin{center}
$\M_{2k+1}(u)\models ``p[S^*_u]\not =\emptyset"$.
\end{center}
It is, however, a well-known fact that there cannot be $y\in \M_{2k+1}(u)$ which codes a $\Pi^1_{2k+2}$-iterable premouse $\M$ over $u$ such that the proper initial segments of $\M$ are 2k+1-small and $\M$ has $2k+1$-Woodins and a last extender.\footnote{One way to see this is to use a result from \cite{PWOIM}. It is shown there that $x\in \Q_{2k+3}(u) \iff x$ is in every $\Pi^1_{2k+2}$-iterable premouse $\M$ such that the proper initial segments of $\M$ are 2k+1-small and $\M$ has $2k+1$ Woodins and a last extender. Thus, if there was such a premouse $\M\in \M_{2k+1}(u)$ then as $Q_{2k+3}(u)=\mathbb{R}^{\M_{2k+1}(u)}$, $\M\in \M$, contradiction!} This contradiction shows that $\d^f_{\infty, z}\leq \k^1_{2k+3}$.

To show that $\d^f_{\infty, z}\geq \k^1_{2k+3}$, it is enough to show that $\d_{\infty, 0} \geq \k^1_{2k+3}$. For this, we show that every $\Pi^1_{2k+2}$-set is $\d_{\infty, 0}$-Suslin. Let $\d=\d_{\infty, 0}$. To see that the universal $\Pi^1_{2k+2}$-set is $\d$-Suslin let $\Q=\M_{2k}^\#(\M_{\infty, 0}|\d)$. Notice that $\Q$ has size $\d$. Let $U$ be the universal $\Pi^1_{2k+2}$-set. Let $\phi$ be $\Pi^1_{2k+2}$ such that $x\in U\iff \phi(x)$. Let $T$ be the tree of attempts to construct a triple $( x, z, \pi )$ such that
\begin{enumerate}
\item $z$ codes a premouse $\M_z$,
\item $\pi: M_z\rightarrow \Q$,
\item $x$ is generic over $M_z$ for the extender algebra at the least Woodin of $\M_z$,
\item $M_z[x]\models \phi[x]$.
\end{enumerate}
Let then $S=\{(s, f) : s\in \omega^{<\omega}$, $f\in [\d]^{<\omega}$ and $f$ codes $f_0, f_1$ such that $(s, f_0, f_1)\in T\}$. Then, because $\M_{2k}(z)$ is $\Pi^1_{2k+2}(z)$-correct, it is not hard to see that $p[S]=U$. This then completes the proof that $\d^f_{\infty, z}=\k^1_{2k+3}$.
\end{proof}

As a corollary to \rlem{full limit is bounded}, we get the following.

\begin{corollary}
For every $z\in \mathbb{R}$, $\d_{\infty, z}=\k^1_{2k+3}$.
\end{corollary}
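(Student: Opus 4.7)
The plan is to package three pieces already in place: Corollary~\ref{corollary to boundness} (which gives monotonicity and the comparison $R_{z,s_m}\leq R^f_{z,s_m}$), Theorem~\ref{woodins thing} (which pins down $\d^f_{\infty,z}=\k^1_{2k+3}$), and the Suslin tree construction contained inside the proof of Theorem~\ref{woodins thing} (which actually yields the sharper statement $\d_{\infty,0}\geq\k^1_{2k+3}$). No new ideas are required; the corollary amounts to sandwiching $\d_{\infty,z}$ between two equal values.

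First I would establish the upper bound. By Corollary~\ref{corollary to boundness}, for every $m<\omega$ we have $\card{R_{z,s_m}}\leq\card{R^f_{z,s_m}}$. Taking $\sup_m$ on both sides and recalling that
\[
\d_{\infty,z}=\sup_{m<\omega}\card{R_{z,s_m}}\qquad\text{and}\qquad\d^f_{\infty,z}=\sup_{m<\omega}\card{R^f_{z,s_m}},
\]
this yields $\d_{\infty,z}\leq\d^f_{\infty,z}$. By Theorem~\ref{woodins thing}, $\d^f_{\infty,z}=\k^1_{2k+3}$, so $\d_{\infty,z}\leq\k^1_{2k+3}$.

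Next I would establish the lower bound. The final paragraph of the proof of Theorem~\ref{woodins thing} proves more than what its statement records: by constructing the tree $T$ of attempts to build a triple $(x,z,\pi)$ realizing $\phi$ over a generic extension of a collapse of $\M_{2k}^{\#}(\M_{\infty,0}|\d_{\infty,0})$, it shows that every $\utilde{\Pi}^1_{2k+2}$ set is $\d_{\infty,0}$-Suslin, hence $\d_{\infty,0}\geq\k^1_{2k+3}$. To transfer this bound from $z=0$ to an arbitrary real $z$, I would apply Corollary~\ref{corollary to boundness} once more: since $0\leq_T z$, we have $\card{R_{0,s_m}}\leq\card{R_{z,s_m}}$ for every $m$, hence $\d_{\infty,0}\leq\d_{\infty,z}$. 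Chaining the two, $\k^1_{2k+3}\leq\d_{\infty,0}\leq\d_{\infty,z}$.

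The only thing to verify carefully is that the monotonicity in Corollary~\ref{corollary to boundness} really does let one push the lower bound from $0$ to $z$ — but this is exactly the $z\leq_T w$ clause of that corollary applied with $z:=0$ and $w:=z$. Combining the two bounds gives $\d_{\infty,z}=\k^1_{2k+3}$, as required. There is no substantive obstacle; the entire content of the corollary is that the bounded-by-the-full-system inequality is tight, and that tightness has already been supplied by the Suslin argument for $z=0$.
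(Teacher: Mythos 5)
Your proof is correct and is essentially the argument the paper leaves implicit when it states the corollary: sandwich $\d_{\infty,z}$ using the $\card{R_{z,s_m}}\leq\card{R^f_{z,s_m}}$ inequality from Corollary~\ref{corollary to boundness} together with $\d^f_{\infty,z}=\k^1_{2k+3}$ for the upper bound, and pull the Suslin lower bound $\d_{\infty,0}\geq\k^1_{2k+3}$ from inside the proof of Theorem~\ref{woodins thing} up to arbitrary $z$ via the Turing-monotonicity clause of the same corollary. No gaps.
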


\subsection{The proof of the main theorem}

In this subsection, we work towards the proof of \rthm{main theorem}. Recall that
\begin{center}
$a_{2k+1, m}=\sup \{ \card{\leq^*} : \leq^*\in \utilde{\Gamma}_{2k+1, m}\}$\\
\end{center}
We let $\gg_{\infty, m, x}=\gg_{\infty, s_m, x}$ and $\gg^f_{\infty, m, x}=\gg^f_{\infty, s_m, x}$ and let
\begin{center}
$b_{2k+1, m}=\sup_{x\in \mathbb{R}}\gg_{\infty, m, x}$.
\end{center}
Notice that it follows from \rlem{full limit is bounded} that
\begin{center}
$b_{2k+1, m}=\sup_{x\in \mathbb{R}}\gg^f_{\infty, m, x}$.
\end{center}
It follows from \rthm{woodins thing} that
\begin{center}
$\k^1_{2k+3}=\sup_{m\in \omega} b_{2k+1, m}$.
\end{center}
To make the notation as simple as possible, we fix an odd integer $2k+1$. We will omit it from various subscripts from now until the end of this subsection. 

\begin{lemma}\label{sups}
$a_{2k+1, m}\leq b_{2k+1, m+1}$.
\end{lemma}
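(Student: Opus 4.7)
The plan is to adapt the stable-code argument from the proof of $\d^{\M_{\infty,\k}}=(\k^+)^{\M}$ in \rsec{internalizing the directed system} to a $\utilde{\Gamma}_{2k+1,m}$-prewellordering of reals. Fix $\leq^*\in \utilde{\Gamma}_{2k+1,m}$ witnessed by a real $z$ and a formula $\phi$ with $x\leq^* y\iff \M_{2k}((x,y),z)\models \phi[(x,y),z,s_m]$, and let $w$ be a real coding $\W_z$. I will construct an order-preserving map from $\leq^*$ into $\gg_{\infty,m+1,w}$, which gives $|\leq^*|\leq \gg_{\infty,m+1,w}\leq b_{2k+1,m+1}$.

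For each $\P$ with $(\P,s_{m+1})\in \mathcal{I}_w$ that is also a correct iterate of $\W_w$, I choose a maximal antichain $\mathcal{A}^\P\in H^\P_{s_{m+1}}$ of conditions $p\in \mathbb{B}^\P$ such that each $p$ forces a specific value for the $\leq^*$-rank of the generic real $x_{\dot G}$. By \rprop{s-constructions prop}, if $G\subseteq \mathbb{B}^\P$ is generic with $x_G$ coding the required parameters then $\M_{2k}(\P)[G]$ rearranges as $\M_{2k}(x_G)$, so this rank is a bona fide ordinal computed correctly. I then define $\leq^\P$ on $\mathcal{A}^\P$ by letting $p\leq^\P q$ iff $p$ forces a rank at most that forced by $q$. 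Since $\mathcal{A}^\P\in H^\P_{s_{m+1}}$, the ordertype of $\leq^\P$ is below $\gg^\P_{s_{m+1}}$, and I let $\xi^\P_p<\gg^\P_{s_{m+1}}$ denote the $\leq^\P$-rank of $p$. The shift from $s_m$ to $s_{m+1}$ is exactly what enables this bound: although $\phi$ mentions only $s_m$, identifying the values of $\phi$ uses $\Sigma_1$-data about $\M_{2k}(\P)|\b$ for some uniform indiscernible $\b>\max(s_m)$, and this data is captured by $T^\P_{s_{m+1},i}\in H^\P_{s_{m+1}}$.

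Following the existing template, I call $(\P,p)$ a stable code for $x$ if $\P$ absorbs $x$ via its extender algebra with $p$ in the generic filter, and for every further correct iterate $\Q$ absorbing $x$ and every $q\in \mathcal{A}^\Q$ in the corresponding filter, $\xi^\Q_q=\pi_{\P,\Q,s_{m+1}}(\xi^\P_p)$. Existence of stable codes is proved by the standard descending-sequence contradiction: if stability fails at every iterate, one builds an $\omega$-sequence $\la (\P_i,p_i):i<\omega\ra$ of correct iterates satisfying $\pi_{\P_i,\P_{i+1},s_{m+1}}(\xi^{\P_i}_{p_i})>\xi^{\P_{i+1}}_{p_{i+1}}$, and in the direct limit---using Dodd-Jensen to ensure that the $s_{m+1}$-iterability maps coincide with the true iteration maps---the images form an infinite descending sequence of ordinals.

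Finally, define $f(x)=\pi_{\P_x,\infty,s_{m+1},w}(\xi^{\P_x}_{p_x})$ for any stable code $(\P_x,p_x)$ of $x$. Well-definedness and order-preservation of $f$ reduce, as usual, to absoluteness of $x\leq^* y$ to $\M_{2k}(\Q)[x,y]$ for a common correct iterate $\Q$ absorbing both reals, combined with the stability property. The main obstacle I anticipate is verifying carefully that the $\S$-construction rearrangement $\M_{2k}(\P)[G]\cong \M_{2k}(x_G)$ correctly preserves the identification of $s_m$, so that the rank computed inside the extension genuinely equals the true $\leq^*$-rank; everything hinges on the fact that $T^\P_{s_{m+1},i}$ encodes enough of $\M_{2k}(\P)$ beyond $\max(s_m)$ to pin this down, which is the essential reason the index shifts from $m$ to $m+1$.
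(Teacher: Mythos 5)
Your plan has a genuine gap at its very first step: the claim that a condition $p\in\mathbb{B}^\P$ can ``force a specific value for the $\leq^*$-rank of the generic real $x_{\dot G}$,'' supported by the remark that after the $\S$-construction rearrangement $\M_{2k}(\P)[G]=\M_{2k}(x_G)$ ``this rank is a bona fide ordinal computed correctly.'' It is not. What $\M_{2k}(x_G)$ correctly identifies is the restriction $\leq^*\cap\,\mathbb{R}^{\M_{2k}(x_G)}$, not the rank function: there are many reals $<^*$-below $x_G$ that lie outside $\M_{2k}(x_G)$, so the rank of $x_G$ as computed in that model is in general strictly smaller than, and wildly dependent on, the model. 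Consequently there is no maximal antichain of conditions deciding $\card{x_G}_{\leq^*}$, no reason such an antichain would lie in $H^\P_{s_{m+1}}$, and no concrete ordinal to feed into Dodd--Jensen in the descending-sequence argument. The index shift $m\to m+1$ that you flag as the ``main obstacle'' is in fact the easy part; what it does \emph{not} do is make a $V$-rank forcing-decidable.

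The paper resolves exactly this difficulty by first applying Moschovakis' coding lemma (following Hjorth): it produces a real $w$ and a $\Sigma^1_{2k+3}(w)$ set $B\subseteq\mathbb{R}^2$ pairing each $x\in\dom(\leq^*)$ with a real $y$ coding a point $(\Q,\b)$ of $\mathcal{J}_{z,s_r}$ with $\card{x}_{\leq^*}=\card{y}_{\leq_{z,r}}$, and then passes to a $\Pi^1_{2k+2}(w)$ relation $R$ with $(x,y)\in B\iff\exists u\,R(w,x,y,u)$. The generic object is the triple $(x,y,u)\in A=\{(x,y,u):R(w,x,y,u)\}$, and what a condition $p$ decides is \emph{not} $\card{x}_{\leq^*}$ but the internally computable ordinal $\pi_{\Q,\infty,r,z,\P}(\b)$ coming from the directed system internalized in $\M_{2k}(\P)$. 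This is a forcing-absolute quantity, the antichain can be taken in $H^\P_{s_{m+1}}$, Dodd--Jensen applies to the internal direct-limit embeddings to give the strict drop $\b_{p_{l+1}}<i_{\P_l,\P_{l+1}}(\b_{p_l})$, and the equality case is excluded precisely because $B$ identifies the $R_{z,r}$-class of $(\Q^*,\b^*)$ with the $\leq^*$-class of $x$. Your sketch omits this translation step entirely, and without it the stability and descending-sequence machinery has nothing to act on. The rest of your outline (stable codes, direct-limit contradiction, order-preserving embedding into $R_{w,s_{m+1}}$) does match the paper's template once the coding lemma is inserted.
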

\begin{proof}
Fix $m\in \omega$ and let $\leq^*\in\utilde{\Gamma}_{2k+1, m}$. Let $z^*, \phi$ be such that for all $x, y\in \mathbb{R}$,
\begin{center}
$x\leq^* y \iff \M_{2k}(z^*, x, y) \models \phi[z^*, x, y, s_{m}]$.
\end{center}
Suppose towards a contradiction that $\card{\leq^*}=\sup_{x\in \mathbb{R}} \gg_{\infty, x, m+1}$ (this may produce another real parameter, but we assume that it is already part of $z^*$).

First notice that for every $l$, $\sup_{x\in \mathbb{R}} \gg_{\infty, l, x}< \k^1_{2k+3}$. This is because if $\sup_{x\in \mathbb{R}} \gg_{\infty, l, x}=\k^1_{2k+3}$ then because $\cf(\kappa^1_{2k+3})=\omega$ (see \cite{Moschovakis}), there must be $x$ such that $\gg_{\infty, l, x}=\k^1_{2k+3}$. But since $\d_{\infty, x}>\gg_{\infty, l, x}$, we get a contradiction. Thus, we can fix $z\in \mathbb{R}$ and $r\in \omega$ such that $z^*\leq_T z$ and $\gg_{\infty, r, z} >\sup_{x\in \mathbb{R}} \gg_{\infty, m, x}$.

Following Hjorth (see \cite{Hjorth01}), using Moschovakis' coding lemma (see \cite{Moschovakis}), we get $w\in \mathbb{R}$ and a $\Sigma^1_{2k+3}(w)$ set $B\subseteq \mathbb{R}^2$ such that $z\leq_T w$
\begin{enumerate}
\item if $(x, y)\in B$ then $x\in dom(\leq^*)$, $y\in dom(\leq_{z, r})$ and $\card{x}_{\leq^*}=\card{y}_{\leq_{z, r}}$,
\item for every $x\in dom(\leq^*)$ there is $y\in dom(\leq_{z,r})$ such that $(x, y)\in B$.
\end{enumerate}
Let $R$ be $\Pi^1_{2k+2}(w)$ such that $(x, y)\in B \iff \exists u R(w, x, y, u)$. We now construct an embedding of $\leq^*$ into $R_{w, s_{m+1}}$. Let $A=\{ (x, y , u) : R(w, x, y, u)\}$. Notice that whenever $a$ is a countable transitive set, $\leq^*\cap \mathbb{R}^{\M_{2k}(a)}\in \M_{2k}(a)$. We will abuse our notation and write $\leq^*$ for $\leq^*\cap \mathbb{R}^{\M_{2k}(a)}$.

Given a suitable $\P$, there is a maximal antichain $\mathcal{A}\subseteq \mathbb{B}^\P$ such that if $p\in \mathcal{A}$ then for some $\a$, in $\M_{2k}(\P)$
 \begin{enumerate}
\item $p\forces ``x_G=(x, y, u)\in A"$,
\item $p\forces``\forces_{Coll(\omega, \d^\P)} \card{x}_{\leq^*}=\a$".
\end{enumerate}
Notice that we can take $\mathcal{A}\in H_{m+1}^\P$. Let then $\mathcal{A}^\P$ be the least such maximal antichain. We can define $\leq^\P$ on $\mathcal{A}^\P$ as follows. Given $p\in \mathcal{A}$, let $\a_p$ be the ordinal $\a$ as in 2. Then for $p, q\in \mathcal{A}$, we let $p\leq^\P q$ iff $\a_p \leq \a_q$. Notice that $\card{\leq^\P}<\gg_{m+1}^\P$. The remaining part of the proof is similar to the proof of \rlem{full limit is bounded}.

Given now an $x\in dom(\leq^*)$, a suitable $\P$ and $p\in \mathcal{A}^\P$ we say $(\P, p)$ is $x$-stable if there is $(x, y, u)\in A$ which is generic over $\P$ for $\mathbb{B}^\P$ and
\begin{enumerate}
\item if $G\subseteq \mathbb{B}^\P$ is such that $x_G=(x, y, u)$ then $p\in G$,
\item whenever $(\R, q)$ is such that $\R$ is a correct iterate of $\P$ such that some $(x, y^*, u^*)\in A$ is generic over $\R$ for $\mathbb{B}^\R$, and $q\in \mathcal{A}^\R\cap H$ where $H\subseteq \mathbb{B}^\R$ is the $\R$-generic such that $x_H=(x, y^*, u^*)$, then
\begin{center}
$\card{q}_{\leq^\R}=_{\leq^\R}\card{\pi_{\P, \R, s_{m+1}}(p)}$.
\end{center}
\end{enumerate}

We claim that for every $x\in dom(\leq^*)$ there is $x$-stable $(\P, p)$. To see this, suppose not. First let $y, u$ be such that $(x, y, u)\in A$. Then let $\P$ be suitable such that $(x, y, u)$ is generic for $\mathbb{B}^\P$. There is then $p\in \mathcal{A}^\P$ such that if $G\subseteq \mathbb{B}^\P$ is $\P$-generic such that $x_G=(x, y, u)$ then $p\in G$. Let $\a=\a_{\P, p}$. Because $(\P, p)$ isn't $x$-stable we must have that there is a correct iterate $\R$ of $\P$ such that some $(x, y^*, u^*)\in A$ is generic over $\R$ for $\mathbb{B}^\R$, and if $H$ is the generic such that $x_H=(x, y^*, u^*)$ and $q\in H\cap \mathcal{A}^\R$ then
\begin{center}
$\card{q}\not=_{\leq^\R}\card{\pi_{\P, \R, s_{m+1}}(p)}$.
\end{center}
Let $y$ code $(\Q, \b)$ and let $y^*$ code $(\Q^*,\b^*)$. Notice that $(\Q, \b)=_{R_{z, r}}(\Q^*, \b^*)$.
Let also
\begin{center}
$i=i_{\P, \R}\rest \M_{\infty, z, \P}:\M_{\infty, z, \P} \rightarrow \M_{\infty, z,  \R}$.
\end{center}
We have that
\begin{center}
$i\circ \pi_{\Q, \infty, r, z, \P} : H_r^\P\rightarrow H_r^{\M_{\infty, z, \R}}$.
\end{center}
Because of Dodd-Jensen then we get that
\begin{center}
$i(\pi_{\Q, \infty, r, z, \P}(\b))\geq \pi_{\Q^*, \infty, r, z \R}(\b^*)$.
\end{center}
Notice that equality cannot hold. To see this, suppose $i(\pi_{\Q, \infty, r, z, \P}(\b))=\pi_{\Q^*, \infty, r, z, \R}(\b^*)$. We have that,
 \begin{center}
 $\M_{2k}(\P)\models p\forces $ ``if $(x_G)_2=(\dot{\Q}, \dot{\b})$ then $\pi_{\dot{\Q}, \infty, r, z, \P}(\dot{\b})=\check{\xi}$"\footnote{Here we think of a real $x$ as coding a triple $(x_1, x_2, x_3)$.}.
  \end{center}
 where $\check{\xi}=\pi_{\Q, \infty, r, z, \P}(\b)$. We then have by elementarity that there is $\R$-generic $H\subseteq \mathbb{B}^\R$ such that $i_{\P, \R}(p)\in H$ and if $(x_H)_2=(\S, \nu)$ then $\pi_{\S, \infty, r, z,  \R}(\nu)=i_{\P, \R}(\xi)$. But since we are assuming that $i(\pi_{\Q, \infty, r, z, \P}(\b))=\pi_{\Q^*, \infty, r, z, \R}(\b^*)$, we must have that $(\S, \nu)=_{R_{z, r}}(\Q^*, \b^*)$ and by the choice of $B$ we must have that $(x_H)_1=_{\leq^*} x$. This then implies that $i_{\P, \R}(p)=_{\leq^\R} q$, contradiction. Thus we must have that
\begin{center}
$i(\pi_{\Q, \infty, z, w\oplus\P, r}(\b))> \pi_{\Q^*, \infty, z, w\oplus\R, r}(\b^*)$.
\end{center}
Let then $\P_0=\P$, $(x, y_0, u_0)=(x, y, u)$, $\P_1=\R$ and $(x, y_1, u_1)=(x, y^*, u^*)$. Let $(\Q_0, \b_0)$ be the pair coded by $y_0$ and let $(\Q_1, \b_1)$ be the pair coded by $y_1$. Let $\xi_i=\pi_{\Q_i, \infty, z, r, \P_i}(\b_i)$ for $i=0,1$. It then follows from our discussion that $i_{\P_0, \P_1}(\xi_0)>\xi_1$.

By a repeated application of the argument used in the previous paragraph, we can get $\la \P_l, (\Q_l, \b_l), \xi_l : l\in \omega\ra$ such that
\begin{enumerate}
\item $\P_l\in \mathcal{F}_w$,
\item $\P_{l+1}$ is a correct iterate of $\P_l$,
\item $(\Q_l, \b_l) \in \mathcal{I}_{w, r}$ and $(Q_l, \b_l)$ is generic over $\P_l$ for $\mathbb{B}^{\P_l}$,
\item $\pi_{\Q_l, \infty, r, z, \P_l}(\b_l)=\xi_l$,
\item $i_{\P_l, \P_{l+1}}(\xi_l)>\xi_{l+1}$.
\end{enumerate}
Letting $\sigma_{l, j}:\P_l\rightarrow \P_j$ be the iteration embedding, letting $\Q$ be the direct limit of $\la \P_l, \sigma_{l, j} : l< j <\omega\ra$ and letting $\sigma_l:\P_l\rightarrow \Q$ be the iteration embedding we get that
$\la \sigma_l(\xi_l) : l <\omega\ra$ is a decreasing sequence of ordinals, contradiction. Thus, indeed, for every $x$ there is an $x$-stable $(\P, p)$.

Let then for each $x$, $S_x$ be the set of $x$-stable $(\P, p)$'s and let $\b_{\P, p}=\card{p}_{\leq^\P}$. Using uniformization, we can choose $(\P_x, p_x)\in S_x$. Notice now that
\begin{center}
 $x\leq^*y\iff (\P_x, p_x)\leq_{w, m} (\P_y, p_y)$.
\end{center}
(To see this, let $\R$ be a common iterate of $\P_x$ and $\P_y$ such that for some $u, v, u^*, v^*\in \mathbb{R}$, $(x, u, v)$ and $(y, u^*, v^*)$ are generic over $\R$ for $\mathbb{B}^\R$. Then by $x$ and $y$ stability, we must have that  $x\leq^*y$ holds if and only if $i_{\P_x, \R}(p_x)\leq^\R i_{\P_y, \R}(p_y)$.) We then have that $x\rightarrow (\P_x, p_x)$ is an order preserving map of $\leq^*$ into $R_{w, m+1}$. Therefore, $\card{\leq^*}\leq \card{R_{w, m+1}}\leq \sup_{x\in \mathbb{R}}\gg_{\infty, x, m+1}$, contradiction!
\end{proof}

We thus have that $\sup_{m\in \omega} a_{2k+1,m}\leq \k^1_{2k+3}$. Notice that for each $m\in \omega$ and $w\in \mathbb{R}$, $R_{w, m}\in \Gamma_{2k+1, m+1}(w)$. Because we have that $\sup_{m\in \omega}b_{2k+1,m}=\k^1_{2k+3}$, we easily get that $\sup_{m\in \omega} a_{2k+1, m} = \k^1_{2k+3}$. This then finishes the proof of the Main Theorem.

\section{Some remarks}

First of all, it turns out that $b_{2k+1, m}$ is a cardinal for every $m$ and moreover, $b_{2k+1, m}<\k^1_{2k+3}$. Here is the proof.

\begin{lemma}\label{as are cardinals} For every $m$, $b_{2k+1, m}<\k^1_{2k+3}$  and $b_{2k+1, m}$ is a cardinal.
\end{lemma}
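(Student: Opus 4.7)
The plan has two parts. Part~1 gives the strict inequality via a cofinality-plus-coding argument. Part~2 shows $b_{2k+1,m}$ is a cardinal by reducing to the fact that a supremum of cardinals is a cardinal.

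For Part~1, I would argue by contradiction: if $b_{2k+1,m}=\k^1_{2k+3}$, then since $\cf(\k^1_{2k+3})=\omega$ (see \cite{Moschovakis}) I can pick reals $\la x_n:n<\omega\ra$ with $\sup_n\gg_{\infty,m,x_n}=\k^1_{2k+3}$, code them into a single real $x$, and apply the Turing-monotonicity from Corollary~\ref{corollary to boundness} to conclude $\gg_{\infty,m,x_n}\leq\gg_{\infty,m,x}$ for every $n$, hence $\gg_{\infty,m,x}=\k^1_{2k+3}$. But by definition $\gg_{\infty,m,x}<\d_{\infty,x}$, while the corollary to Theorem~\ref{woodins thing} gives $\d_{\infty,x}=\k^1_{2k+3}$, a contradiction. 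This is the same trick used at the top of the proof of Lemma~\ref{sups}.

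For Part~2, I use that a supremum of cardinals is a cardinal, applied twice. First, $b_{2k+1,m}=\sup_x\gg_{\infty,m,x}$, so it suffices to show that each $\gg_{\infty,m,x}$ is a cardinal. Second, $\gg_{\infty,m,x}=\sup \pi_{\P,\infty,s_m,x}"\gg^\P_{s_m}$ for $(\P,s_m)\in\mathcal{I}_x$; the cofinally many ordinals below $\gg^\P_{s_m}$ that are cardinals in the transitive collapse of $H^\P_{s_m}$ are sent by the elementary embedding $\pi_{\P,\infty,s_m,x}$ to cardinals of $\M_{\infty,x}$ cofinal in $\gg_{\infty,m,x}$, so $\gg_{\infty,m,x}$ is a sup of cardinals of $\M_{\infty,x}$, hence a cardinal there. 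The cofinally-many cardinals below $\gg^\P_{s_m}$ exist because, by a replay of the indiscernibles argument in the proof of Lemma~\ref{ggs sup up}, the $\M_n$-closure of the collapse of $H^\P_{s_m}$ is essentially $n$-suitable with $\gg^\P_{s_m}$ as its Woodin cardinal (and Woodins are limits of inaccessibles, hence of cardinals). Finally, by Theorem~\ref{steel's thing} together with the internalization developed in Subsection~\ref{internalizing the directed system}, $\M_{\infty,x}|\d_{\infty,x}$ agrees with $\H^{L(\mathbb{R})}|\d_{\infty,x}$, so cardinality in $\M_{\infty,x}$ descends to cardinality in $\H$, in $L(\mathbb{R})$, and in $V$.

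The principal obstacle I anticipate is the hull-to-suitable-premouse identification: one must re-examine the argument of Lemma~\ref{ggs sup up} with enough fine-structural care to recognize that the $\M_n$-closure of the collapse of $H^\P_{s_m}$ is genuinely $n$-suitable with $\gg^\P_{s_m}$ as its Woodin cardinal, and that $\pi_{\P,\infty,s_m,x}$ is elementary enough to carry the relevant initial-segment cardinals faithfully to $\M_{\infty,x}$. Once this structural fact is in place, the rest is a routine application of ``sup of cardinals is a cardinal'' together with the already-established correspondence between $\M_{\infty,x}$ and $\H$.
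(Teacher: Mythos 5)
Your Part~1 reproduces the paper's argument for $b_{2k+1,m}<\k^1_{2k+3}$: fix a cofinal $\omega$-sequence of reals, amalgamate via the Turing-monotonicity of Corollary~\ref{corollary to boundness}, and derive the contradiction from $\gg_{\infty,m,x}<\d_{\infty,x}=\k^1_{2k+3}$. That part is fine and matches the paper.

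Your Part~2 takes a genuinely different route and it has a gap. The paper argues by coding: if $b_{2k+1,m}$ were not a cardinal, one codes a well-ordering of $\k=\card{b_{2k+1,m}}$ of length $b_{2k+1,m}$ as a subset $A\subseteq\k$, places $A$ in $\H_z$, moves to $w\geq_T z$ with $\k<\gg^f_{\infty,s_m,w}$, and contradicts the fact (stated as clear, via the agreement of $\M^f_{\infty,w}$ with $\H_w$) that $\gg^f_{\infty,s_m,w}$ is a cardinal of $\H_w$. You instead try to prove directly that each $\gg_{\infty,m,x}$ is a cardinal, which is in the spirit of what the paper takes for granted, but the specific argument does not go through. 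The problematic step is the claim that \emph{the $\M_n$-closure of the collapse of $H^\P_{s_m}$ is $n$-suitable with $\gg^\P_{s_m}$ as its Woodin cardinal}, purportedly ``by a replay'' of Lemma~\ref{ggs sup up}. Inspect that proof: it forms $Hull_1^\P\bigl(\gg\cup\{T^\P_{s_m,i}:m,i<\omega\}\bigr)$ and uses the collapses $S_{m,i}$ of \emph{all} the theory-codes, observing that $\bigcup_{m<\omega}S_{m,i}$ is a complete consistent theory whose model is the $\omega$-indiscernible closure $\M_n(\N|(\gg^{+i})^\N)$. This critically requires letting $m\to\omega$ so that all uniform indiscernibles appear. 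For a fixed $m$, $H^\P_{s_m}$ contains only the theories $T^\P_{s_m,i}$ with the finite parameter sequence $s_m$; there is no reason the collapse of $H^\P_{s_m}$ alone supports the recovery of $\M_n$ over it, nor that $\gg^\P_{s_m}$ is Woodin there. So you have not established the cofinal supply of cardinals below $\gg^\P_{s_m}$ that drives the rest of your argument.

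Two smaller points. First, even granting the elementarity bookkeeping (the hull maps and direct-limit maps are indeed $\Sigma_1$-elementary and hence preserve ``is a cardinal'' both ways), the descent to $V$-cardinality requires $\M_{\infty,x}|\d_{\infty,x}$ to agree with $\H|\d_{\infty,x}$. The paper establishes this only for the \emph{full} directed system: it is $\M^f_{\infty,w}|\d^f_{\infty,w}=\H_w|\d^f_{\infty,w}$ that comes out of Theorem~\ref{steel's thing}, not the corresponding fact for the $\M_{2k+1}$-system $\M_{\infty,x}$. You should carry out your Part~2 for $\gg^f_{\infty,m,w}$ and $\M^f_{\infty,w}$ (using $b_{2k+1,m}=\sup_w\gg^f_{\infty,m,w}$ from Lemma~\ref{full limit is bounded}) rather than for $\M_{\infty,x}$. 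Second, if you can repair the suitability claim for a fixed $s_m$, your approach would actually fill in the ``clearly'' in the paper's proof, which is a worthwhile supplement; but as written the central structural claim is unsupported.
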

\begin{proof}
We have that $b_{2k+1, m}< \k^1_{2k+3}$ because if for some $m$, $b_{2k+1, m}=\k^1_{2k+3}$ then because $\cf(\k^1_{2k+3})=\omega$, we can fix $x$ such that $\gg_{\infty, s_m, x}=\k^1_{2k+3}$. But this contradicts \rthm{woodins thing}. Thus, we have that $b_{2k+1, m}<\k^1_{2k+3}$.  Suppose no that for some $m$, $b_{2k+1, m}$ isn't a cardinal. Let $\k=\card{b_{2k+1, m}}$. Then $\k<b_{2k+1, m}$ and there is $A\subseteq \k$ such that $A$ codes a well-ordering of $\k$ of length $b_{2k+1, m}$. There is then a real $z$ such that $A\in \H_z$. We then can get $w$ such that $z\leq_T w$ and $\k<\gg^f_{\infty, s_m, w}$. It follows that $A\in \H_w$ and in particular, $\gg^f_{\infty, s_m, w}$ isn't a cardinal of $\H_w$. But clearly $\gg^f_{\infty, s_m, w}$ is a cardinal of $\H_w$, contradiction.
\end{proof}

We do not know if $a_{2k+1, m}=b_{2k+1, m}$. A more interesting question that comes up naturally is what is the exact place of $b_{2k+1, m}$ in the sequence of $\aleph$'s. We conjecture that $a_{2k+1, 0}=\d^1_{2k+2}$. One evidence for this is that by Hjorth's aforementioned result, $a_{1, 0}=u_2=\omega_2=\d^1_2$. More generally, Jackson showed that the sup of the lengths of $\Pi^1_{2k}$ prewellorderings is $\d^1_{2k}$ and $\Pi^1_{2k}$ is a subclass of $\Gamma_{2k+1, 0}$. The general question is open.

 It seems to be possible to use the directed system associated with $\M_{2n+1}$ to prove Kechris-Martin kind of results for $\Pi^1_{2k+3}$ (see \cite{KecMar}). In particular, one should be able to prove that $\Pi^1_{2k+2}$ is closed under quantification over $\k^1_{2k+3}$. Another application should be the uniqueness of $L[T_{2k}]$, i.e., it should be possible to prove, using ideas from this paper, that $L[T_{2k}]$ is independent of the choice of the scale that produces $T_{2k}$. This would generalize Hjorth's theorem on the uniqueness of $L[T_2]$ (see \cite{Hjorth96}).  It should also be possible to prove results like Solovay's $\Delta^1_3$-coding result (see \cite{Solovay}) for higher levels of projective hierarchy. The author, however, has no intuition on whether it is possible to use directed systems to carry out Jackson's analysis of projective ordinals. From an inner model theoretic point of view, Jackson's analysis remains a mystery.

\bibliography{directsystem}
\bibliographystyle{plain}
\end{document}